\documentclass[11pt,a4paper]{amsart} 

\usepackage[french,english]{babel}	
\usepackage[T1]{fontenc}		
\usepackage[utf8]{inputenc}		
\usepackage{enumerate}    
\usepackage{lmodern}			
\usepackage{dsfont}			
\usepackage{amsmath, amsthm, amssymb, amsthm,bbm,bm,mathrsfs}
\usepackage[pdfborder={0 0 0}]{hyperref}
\usepackage[toc,page]{appendix} 
\usepackage{epsfig,color}
\usepackage[all]{xy}
\usepackage{framed}
\numberwithin{equation}{section}


\textwidth 16cm
\textheight 22cm
\topmargin 0.0cm
\oddsidemargin 0.4cm
\evensidemargin -0.4cm

\title{Matricial model for the free multiplicative convolution }
\author{Guillaume Cébron}
\dedicatory{Laboratoire de Probabilités et Modèles Aléatoires (LPMA)}

\date{2014}
\email{guillaume.cebron@upmc.fr}
\subjclass[2010]{Primary 15B52, 60B15; secondary 46L54, 60E07.}
\keywords{Random matrices, free probability, infinitely divisible distributions.}
\address{LPMA, UMR 7599,
Université Paris 6
, 4 place Jussieu 
, 75252 Paris Cedex 05 (FRANCE)}


\newtheorem{theorem}{Theorem}[section] 
\newtheorem{lemma}[theorem]{Lemma}
\newtheorem{proposition}[theorem]{Proposition}
\newtheorem{propdef}[theorem]{Proposition-Definition}

\newtheorem{corollary}[theorem]{Corollary}

\newtheorem{theo}{Theorem}
\newtheorem*{theo*}{Theorem}
\theoremstyle{definition}
\newtheorem{definition}[theorem]{Definition}
\theoremstyle{remark}
\newtheorem{remark}[theorem]{Remark}
\newtheorem{example}[theorem]{Example}



\newcommand{\diff}{\mathrm{d}}
\newcommand{\Tr}{\operatorname{Tr}}

\newcommand{\Log}{\operatorname{Log}}
\newcommand{\e}{\mathrm{\mathbf{e}}}

\newcommand{\Id}{\mathrm{Id}}

\newcommand{\Poiss}{\mathrm{Poiss}}
\newcommand{\inv}{\mathrm{inv}}

\newcommand{\End}{\mathrm{End}}
\newcommand{\ID}{\mathcal{ID}}
\newcommand{\Wg}{\mathrm{Wg}}
\newcommand{\Haar}{\mathrm{Haar}}
\newcommand{\R}{\mathbb{R}}
\newcommand{\C}{\mathbb{C}}
\newcommand{\N}{\mathbb{N}}
\newcommand{\Z}{\mathbb{Z}}
\newcommand{\E}{\mathbb{E}}
\newcommand{\U}{\mathbb{U}}

\newcommand{\Mt}{\mathcal{M}_{\mathbb{U}}}
\newcommand{\Ms}{\mathcal{M}_{\ast}}

\begin{document}

\maketitle
\begin{abstract}
This paper investigates homomorphisms à la Bercovici-Pata between additive and multiplicative convolutions. We also consider their matricial versions which are associated with measures on the space of Hermitian matrices and on the unitary group. The previous results combined with a matricial model of Benaych-Georges and Cabanal-Duvillard allows us to define and study the large $N$ limit of a new matricial model on the unitary group for free multiplicative Lévy processes. 
\end{abstract}


\section{Introduction}
The classical convolution $\ast$ on $\mathbb{R}$ and the classical multiplicative convolution $\circledast$ on the unit circle $\mathbb{U}=\{z\in \mathbb{C}:|z|=1\}$, which correspond respectively to the addition and to the product of independent random variables, have analogues in free probability. Indeed, replacing the concept of classical independence by the concept of freeness, Voiculescu defined the free convolution $\boxplus$ on $\mathbb{R}$, and the free multiplicative convolution $\boxtimes$ on $\U$ (we refer the reader to~\cite{Voiculescu1992} for an introduction to free convolutions). 
A probability measure $\mu$ on $\R$ is said to be $\ast$-\textit{infinitely divisible} 
if, for all $n\in \mathbb{N}^*$, there exists a probability measure $\mu_n $ such that $\mu_n^{\ast n}=\mu$. The set of {$\ast$-infinitely} divisible probability measures endowed with the operation $\ast$ is a semigroup which we will denote by $\ID(\R,\ast)$, and we consider analogously the sets $\ID(\U,\circledast)$, $\ID(\mathbb{R},\boxplus)$ and $\ID(\U,\boxtimes)$.

In~\cite{Bercovici1999}, Bercovici and Pata identified a isomorphism of semigroup~$\Lambda$ between $\ID(\mathbb{R},\ast)$ and $\ID(\mathbb{R},\boxplus)$ which has a good behaviour with respect to limit theorems: for all $\mu\in\ID(\mathbb{R},\ast)$ and all sequence $(\mu_n)_{n\in \N}$ of probability measures on $\R$,
\[\begin{array}{ccc}\mu_n^{\ast n}  \overset{(w)}{\underset{n\to+\infty}{\longrightarrow}}  \mu&  \Longleftrightarrow &  \mu_n^{\boxplus n}  \overset{(w)}{\underset{n\to+\infty}{\longrightarrow}}   \Lambda(\mu)\end{array}\]
where the convergence is  the weak convergence of measures.
Unfortunately, the situation is not as symmetric in the multiplicative case. Let $\Ms$ denote the set of probability measures $\mu$ on $\U$ such that $\int_\U \zeta \diff \mu (\zeta)\neq0$. In~\cite{Chistyakov2008}, Chistyakov and Götze proved that, given a sequence $(\mu_n)_{n\in \N}$ of probability measures on $\U$, the weak convergence of $\mu_n^{\boxtimes n}$ to any measure of $\Ms$ implies the weak convergence of $\mu_n^{\circledast n}$; but they also proved that the converse is false. It is thus only possible to define a homomorphism of semigroup~$\Gamma$ between $\ID(\U,\boxtimes)$ and $\ID(\U,\circledast)$ (see Definition~\ref{gammadef}) such that, for all $\mu\in\ID(\U,\boxtimes)\cap \Ms$ and all sequence $(\mu_n)_{n\in \N}$ of probability measures on $\U$,
$$\begin{array}{ccc}\mu_n^{\boxtimes n}  \overset{(w)}{\underset{n\to+\infty}{\longrightarrow}}  \mu&  \Longrightarrow &  \mu_n^{\circledast n}  \overset{(w)}{\underset{n\to+\infty}{\longrightarrow}}   \Gamma(\mu).\end{array}$$
Finally, the homomorphism $\e:x\mapsto e^{ix}$ from $(\R,+)$ to $(\U,\times)$ induces a homomorphism of semigroup~$\e_\ast$ between $\ID(\mathbb{R},\ast)$ and $\ID(\U,\circledast)$, given by the push-forward of measures, which enjoys a similar property: for all $\mu\in\ID(\mathbb{R},\ast)$ and all sequence $(\mu_n)_{n\in \N}$ of probability measures on $\R$,
$$\begin{array}{ccc}\mu_n^{\ast n}  \overset{(w)}{\underset{n\to+\infty}{\longrightarrow}}  \mu&  \Longrightarrow &  \e_\ast(\mu_n)^{\circledast n}  \overset{(w)}{\underset{n\to+\infty}{\longrightarrow}}   \e_\ast(\mu).\end{array}$$
The first aim of this work is to complete the picture which we just sketched. In Definition~\ref{edef}, we shall introduce a new homomorphism of semigroup~$\e_{\boxplus}$ between $\ID(\mathbb{R},\boxplus) $ and $\mathcal{ID}(\mathbb{U},\boxtimes)$, and which is linked to the previous homomorphisms in the following way.
 \begin{theo}[see Prop.~\ref{thtwo} and Thm.~\ref{thone}]The \label{un}map $\e_{\boxplus}:\ID(\mathbb{R},\boxplus) \to\mathcal{ID}(\mathbb{U},\boxtimes)$ is such that:
 \begin{enumerate}
 \item For all $\mu\in\ID(\mathbb{R},\boxplus)$ and all sequence $(\mu_n)_{n\in \N}$ of probability measures on $\R$,
\begin{equation*}\begin{array}{ccc}\mu_n^{\boxplus n}  \overset{(w)}{\underset{n\to+\infty}{\longrightarrow}}  \mu&  \Longrightarrow &  \e_\ast(\mu_n)^{\boxtimes n}  \overset{(w)}{\underset{n\to+\infty}{\longrightarrow}}   \e_\boxplus(\mu);\end{array}  \label{cwu}
\end{equation*}
 
 \item  The following diagram commutes:
\begin{equation}
\xymatrix{
    \ID(\mathbb{R},\ast) \ar[r]^-*+{\Lambda} \ar[d]_-*+{\e_\ast} &  \ID(\mathbb{R},\boxplus) \ar[d]^-*+{\e_\boxplus} \\
    \mathcal{ID}(\mathbb{U},\circledast)  & \mathcal{ID}(\mathbb{U},\boxtimes) \ar[l]^-*+{\Gamma}.
  }
  \label{diag}
\end{equation}
 \end{enumerate}
 \end{theo}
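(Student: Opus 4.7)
\medskip\noindent\textbf{Proof plan.} The natural route is to work through Lévy-Khintchine-type parametrisations. Free additive infinitely divisible measures on $\R$ are classified by their Voiculescu transform $\varphi_\mu$, and those on $\U$ by a multiplicative analogue, say $\Sigma_\mu$. I would expect $\e_\boxplus$ to be defined so as to send $\mu\in\ID(\R,\boxplus)$ with free Lévy characteristics $(\gamma,\sigma)$ to the element of $\ID(\U,\boxtimes)$ whose multiplicative Lévy characteristics are the push-forward of $(\gamma,\sigma)$ by $x\mapsto e^{ix}$, in perfect analogy with what $\e_\ast$ does on the classical side; semigroup homomorphism then follows from the additive nature of these characteristics. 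Granting this, I would attack the two parts of the theorem as follows.

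\medskip\noindent For Part~(1), I would start from the standard free Bercovici-Pata-Voiculescu criterion, which translates the hypothesis $\mu_n^{\boxplus n}\wn\mu$ into the convergence $n\,\varphi_{\mu_n}\to\varphi_\mu$ uniformly on an appropriate Stolz angle at $0$. Since $\mu_n^{\boxplus n}$ converges, the measures $\mu_n$ must concentrate around $0$, so $\e_\ast(\mu_n)$ concentrates around $1\in\U$; in a neighbourhood of the identity a direct expansion of the multiplicative transform should show that $\Sigma_{\e_\ast(\mu_n)}$ coincides with $\varphi_{\mu_n}$ up to an error that vanishes after multiplication by $n$. Passing to the limit yields $n\,\Sigma_{\e_\ast(\mu_n)}\to\varphi_\mu$, which by the definition of $\e_\boxplus$ is exactly $\e_\ast(\mu_n)^{\boxtimes n}\wn\e_\boxplus(\mu)$.

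\medskip\noindent For Part~(2), given $\nu\in\ID(\R,\ast)$, I would set $\nu_n:=\nu^{\ast(1/n)}$, so that the sequence $\nu_n^{\ast n}=\nu$ is constant. The Bercovici-Pata theorem gives $\nu_n^{\boxplus n}\wn\Lambda(\nu)$, and Part~(1) then gives $\e_\ast(\nu_n)^{\boxtimes n}\wn\e_\boxplus(\Lambda(\nu))$. Provided $\e_\boxplus(\Lambda(\nu))\in\Ms$---which should be transparent from its Lévy-Khintchine form, the classical characteristic function of a $\ast$-infinitely divisible measure never vanishing---the $\Gamma$-property recalled in the introduction yields $\e_\ast(\nu_n)^{\circledast n}\wn\Gamma(\e_\boxplus(\Lambda(\nu)))$. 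On the other hand, $\e_\ast$ being a semigroup homomorphism from $(\R,+)$ to $(\U,\times)$ at the level of push-forwards, we have $\e_\ast(\nu_n)^{\circledast n}=\e_\ast(\nu_n^{\ast n})=\e_\ast(\nu)$ for every $n$, so uniqueness of weak limits forces $\Gamma\circ\e_\boxplus\circ\Lambda=\e_\ast$.

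\medskip\noindent The technical heart is Part~(1): it hinges on a quantitative expansion of the multiplicative Voiculescu transform at the identity of $\U$ matching the additive Voiculescu transform at $0\in\R$ to sufficient order to survive the $n$-fold rescaling, and this is where the specific form of the transforms must be used. The Lévy-Khintchine description should make both transforms explicit and reduce the comparison to elementary asymptotic estimates on their Cauchy-type integrals. If the hypothesis $\e_\boxplus(\Lambda(\nu))\in\Ms$ were to fail for some exceptional $\nu$ in Part~(2), the diagram can alternatively be verified directly on Lévy-Khintchine characteristics, since $\Lambda$, $\Gamma$, $\e_\ast$ and $\e_\boxplus$ all admit transparent descriptions at that level.
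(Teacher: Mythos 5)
Your definition of $\e_\boxplus$ matches the paper's (push the L\'evy measure forward by $\e$, adjust the drift, keep the Gaussian part), and your Part~(2) is a correct but genuinely different route: the paper simply observes that $\e_\ast(\nu)$ and $\Gamma\circ\e_\boxplus\circ\Lambda(\nu)$ have the same $\circledast$-characteristic triplet (a one-line computation, independent of the hard analysis), whereas you deduce the diagram from Part~(1) together with the Chistyakov--G\"otze property of $\Gamma$ and the constancy of $\nu_n^{\ast n}$ for $\nu_n=\nu^{\ast 1/n}$. Your route is valid -- the side conditions you worry about do hold, since the classical characteristic function of a $\ast$-infinitely divisible law never vanishes (so $\e_\ast(\nu_n)\in\Ms$) and $\e_\boxplus$ lands in $\Ms$ by construction -- but it makes the easy half of the theorem depend on the hard half, which the paper avoids.

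The genuine gap is in Part~(1), and it is exactly at the point you flag as "the technical heart" and then do not carry out. Your argument presupposes a multiplicative analogue of the Bercovici--Pata convergence criterion: that convergence of the suitably rescaled transforms (or rescaled measures) is \emph{equivalent} to $\mu_n^{\boxtimes n}\wn\mu$, so that "$n\,\Sigma_{\e_\ast(\mu_n)}\to\varphi_\mu$ \ldots is exactly $\e_\ast(\mu_n)^{\boxtimes n}\wn\e_\boxplus(\mu)$". No such criterion is standard in the form you need; establishing it is the main new analytic content of the paper (Proposition~\ref{shift} and Corollary~\ref{colshift}), proved by a formal-power-series computation of the $S$-transform in a quotient algebra, and it requires recentring $\mu_n$ by the angular part $\omega_n=m_1(\mu_n)/|m_1(\mu_n)|$ of its mean and tracking $\omega_n^{k_n}$ separately -- without this shift the rescaled measures $k_n(1-\Re\zeta)\,\diff\mu_n(\zeta)$ need not converge at all. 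Moreover, your reduction to "elementary asymptotic estimates" near the identity is too optimistic: $\mu_n\to\delta_0$ only weakly, and the L\'evy measure $\rho$ may charge all of $\R$, so a mass of order $1/n$ sits at distance $O(1)$ from the origin and wraps around the circle under $\e$; controlling its contribution is precisely what the paper's Lemma~\ref{lemconv} (integrability of test functions $\sim f_0x^2$ at $0$ against $k_n\mu_n$) and the subsequent four-term decomposition of $k_n\int\zeta^m(1-\Re\zeta)\,\diff(\e_\ast\mu_n)(\omega_n\zeta)$ accomplish. As written, Part~(1) of your proposal is a plausible plan whose two load-bearing steps -- the multiplicative convergence criterion and the quantitative comparison of the two transforms under $\e_\ast$ -- are both asserted rather than proved.
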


In the highly non-commutative theory of Lie groups, there is a well-known process which connects additive infinitely divisible laws with multiplicative ones. It consists in passing to the limit the product of multiplicative little increments which are built from additive increments using the exponential map (see~\cite{Estrade1992}). A natural question is whether there exists a matrix approximation of $\e_\boxplus$ which arises from this procedure.

Our starting point is a matricial model for $\ID(\mathbb{R},\boxplus)$ which has been constructed simultaneously by Benaych-Georges and Cabanal-Duvillard in~\cite{Benaych-Georges2005} and~\cite{Cabanal-Duvillard2005}. For all $N\in \mathbb{N}$, let us consider the classical convolution~$\ast$ on the set of Hermitian matrices $\mathcal{H}_N$, and denote by $\ID_\inv(\mathcal{H}_N,\ast)$ the set of infinitely divisible probability measures on $\mathcal{H}_N$ which are invariant under conjugation. For all ${\mu \in\ID(\mathbb{R},\boxplus)}$, Benaych-Georges and Cabanal-Duvillard proved that there exists an element of $\ID_\inv(\mathcal{H}_N,\ast)$, which we shall denote by $\Pi_N(\mu)$  (see Section~\ref{pin}), such that:
\begin{enumerate}
\item For all $\mu\in\ID(\mathbb{R},\boxplus)$, the spectral measure of a random matrix with distribution $\Pi_N(\mu)$ converges weakly to $\mu$ in probability as $N$ tends to infinity;
\item $\Pi_N : \ID(\mathbb{R},\boxplus)\rightarrow \ID_\inv(\mathcal{H}_N,\ast)$ is a homomorphism of semigroup.
\end{enumerate}
On the other hand, the map $\e:H\mapsto e^{iH}$ from $\mathcal{H}_N$ to the unitary group $U(N)$ induces, with some care, a homomorphism of semigroup from $\ID_\inv(\mathcal{H}_N,\ast)$ to the set $\ID_\inv(U(N),\circledast)$ of infinitely divisible measures on $U(N)$ which are invariant under conjugation. Indeed, for all $\mu\in \ID_\inv(\mathcal{H}_N,\ast)$, the sequence $(\e_{\ast}(\mu^{\ast 1/n})^{\circledast n})_{n\in \N^*}   $ converges weakly to a measure $\mathcal{E}_N(\mu)\in \ID_\inv(U(N),\circledast)$ (see Proposition-Definition~\ref{ththree}). The situation can be summed up in the following diagram:
\begin{equation}
\xymatrix{
    \ID(\mathbb{R},\boxplus)\ar[r]^-*+{\Pi_N} \ar[d]_-*+{\e_\boxplus} & \ID_\inv(\mathcal{H}_N,\ast) \ar[d]^-*+{\mathcal{E}_N} \\
    \mathcal{ID}(\mathbb{U},\boxtimes)&\ID_\inv(U(N),\circledast).
  }\label{diagN}
  \end{equation}
When $N=1$, we have $\Pi_1=\Lambda^{-1}$, $ \mathcal{E}_1=\e_\ast$, and consequently the diagram~\eqref{diagN} is exactly the top part of the diagram~\eqref{diag}. The second main result of this work is the definition of a homomorphism of semigroup $\Gamma_N : \mathcal{ID}(\mathbb{U},\boxtimes)\rightarrow \ID_\inv(U(N),\circledast) $ which completes the picture as follows (see Section~\ref{gamman}).
\begin{theo}[see Prop.~\ref{thfive} and Cor.~\ref{thsix}]
The map $\Gamma_N$ is such that:\label{convwe}\label{trois}
\begin{enumerate}
\item for all $\mu\in\mathcal{ID}(\mathbb{U},\boxtimes)$, the spectral measure of a random matrix $U^{(N)}$ with distribution $\Gamma_N(\mu)$ converges weakly to $\mu$ in expectation, in the sense that, for each continuous function $f$ on $\U$, one has the convergence
$$\lim_{N\to \infty}\frac{1}{N}\E\left[\Tr\left(f\left(U^{(N)}\right)\right)\right]=\int_\U f \diff \mu; $$
\item The following diagram commutes
\begin{equation}
\xymatrix{
    \ID(\mathbb{R},\boxplus)\ar[r]^-*+{\Pi_N} \ar[d]_-*+{\e_\boxplus} & \ID_\inv(\mathcal{H}_N,\ast) \ar[d]^-*+{\mathcal{E}_N} \\
    \mathcal{ID}(\mathbb{U},\boxtimes)\ar[r]_-*+{\Gamma_N}&\ID_\inv(U(N),\circledast).
  } \label{ding}
\end{equation}
\end{enumerate}
\end{theo}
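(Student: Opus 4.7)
The plan is to define $\Gamma_N$ at the level of Lévy--Khintchine data and then verify both assertions from that description. Each $\mu\in\ID(\U,\boxtimes)$ is encoded by a free characteristic pair (a drift $b\in\R$ and a finite Lévy measure $\tau$ on $\U$), and each conjugation-invariant classical infinitely divisible measure on $U(N)$ is encoded by its Hunt triple on the compact Lie group $U(N)$: a central drift in $\mathfrak{u}(N)$, an $\operatorname{Ad}$-invariant quadratic form on $i\mathcal{H}_N$, and an $\operatorname{Ad}$-invariant Lévy measure on $U(N)\setminus\{I_N\}$. I would set $\Gamma_N(\mu)$ to be the unique element of $\ID_\inv(U(N),\circledast)$ whose Hunt triple has central drift $ibI_N$, no Brownian part, and Lévy measure equal to the average under $\operatorname{Ad}(U(N))$ of the pushforward of $\tau$ by $\zeta\mapsto \operatorname{diag}(\zeta,1,\dots,1)$. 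Since $\boxtimes$ on $\ID(\U,\boxtimes)$ and $\circledast$ on $\ID_\inv(U(N),\circledast)$ are both implemented by addition of the corresponding characteristic triples, this definition makes $\Gamma_N$ a homomorphism of semigroup at no extra cost.

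Part~(2) of the theorem, i.e.\ the commutativity of the diagram~\eqref{ding}, would then be verified by a direct comparison of Lévy--Khintchine triples. Given $\nu\in\ID(\R,\boxplus)$ with free triple $(a,\rho)$, the Benaych-Georges/Cabanal-Duvillard construction provides an explicit Hunt generator for $\Pi_N(\nu)$ on $\mathcal{H}_N$, and $\mathcal{E}_N$ transports this generator to $U(N)$ via the exponential map in the classical manner of Hunt's theorem. On the other side of the diagram, the convergence characterization of $\e_\boxplus$ from Theorem~\ref{un}(1) forces the free Lévy triple of $\e_\boxplus(\nu)$ to be obtained from $(a,\rho)$ by pushing $\rho$ through $x\mapsto e^{ix}$ with the induced drift correction; feeding this into the definition of $\Gamma_N$ above yields exactly the same Hunt generator on $U(N)$ as the route through $\Pi_N$ and $\mathcal{E}_N$.

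The genuinely hard step is part~(1), the large-$N$ spectral convergence. My strategy is to combine the commutative diagram from part~(2) with the Benaych-Georges/Cabanal-Duvillard theorem that the spectral measure of $\Pi_N(\nu)$ converges to $\nu$ in probability. When $\mu=\e_\boxplus(\nu)$ lies in the image of $\e_\boxplus$, then $\Gamma_N(\mu)=\mathcal{E}_N(\Pi_N(\nu))$ is the time-one marginal of a left-invariant Lévy process $(U_t^{(N)})_{t\ge 0}$ on $U(N)$ built by exponentiating infinitesimal additive increments of $\Pi_N(\nu)$. To derive the spectral convergence, I would test against the functions $u\mapsto \tr(u^n)$, $n\in\Z$: Itô's formula for Lévy processes on Lie groups gives an ODE for $t\mapsto \frac{1}{N}\E[\Tr((U_t^{(N)})^n)]$ whose coefficients are expressible in terms of the Hunt triple of $\Gamma_N(\mu)$, and one lets $N\to\infty$ in that ODE to recover the free multiplicative semigroup equation determining the $\ast$-moments of $\mu^{\boxtimes t}$. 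The main technical obstacle is the control of the Lévy measure near the identity of $U(N)$, together with the uniform control needed to pass from moment convergence to weak convergence in expectation; I would handle the former by truncating to the compound-Poisson case (finite Lévy measure) and then approximating a general $\mu$ by truncations in the Lévy--Khintchine parametrization, and handle a general $\mu$ outside the image of $\e_\boxplus$ by approximating its triple by ones in that image.
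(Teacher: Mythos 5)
There are genuine gaps, both in your definition of $\Gamma_N$ and in the argument for part~(1). First, the Hunt triplet you propose does not give a matrix model that converges to $\mu$. You take ``no Brownian part'', but the Gaussian component $b$ of the $\boxtimes$-characteristic data must be turned into an honest diffusive part on $U(N)$ (in the paper, the covariance matrix $b_N$ with $b/(N+1)$ on $\frak{su}(N)$ and $b$ on the central direction, Definition~\ref{gammand}): if you encode it instead inside the finite measure $\tau$, its atom at $1$ is pushed to an atom at $I_N$, which is not a Lévy measure and produces no motion, so for $\mu=\mathcal{B}_b$ (the free unitary Brownian motion marginal, i.e.\ Biane's case) your $\Gamma_N(\mu)$ degenerates to a Dirac mass and part~(1) fails. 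Moreover your Lévy measure on $U(N)$ is the plain $\operatorname{Ad}$-average of the pushforward of $\tau$, without the factor $N$ that the paper puts in ($\upsilon_N$ is the pushforward of $N\upsilon\otimes\Haar$); since each jump $g\,\mathrm{diag}(\zeta,1,\dots,1)\,g^*$ moves only one direction, its contribution to $\frac1N\E[\Tr]$ is $O(1/N)$, and without the factor $N$ the whole jump part disappears in the limit, again contradicting part~(1).

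Second, the key analytic step of part~(1) is not actually carried out. The Itô/ODE you invoke for $t\mapsto\frac1N\E[\Tr((U^{(N)}_t)^n)]$ is not a closed equation: the generator applied to $\Tr(u^n)$ produces \emph{products} of traces, so letting $N\to\infty$ in the coefficients requires the asymptotic factorization of mixed trace moments, which is precisely the hard content of the theorem and cannot be assumed. The paper resolves this by computing $\E[U_N^{\otimes n}]$ exactly as $d\rho^{\mathfrak{S}_n}_N(e^{\tilde L})$ via Schur--Weyl duality and the Weingarten calculus (Prop.~\ref{momsemi}), showing that the conjugated generator converges to an explicit operator on $\C[\mathfrak{S}_n]$ (Lemma~\ref{superlemme}), and identifying the resulting chain expansion with the moments of $\mu$ through the free log-cumulant formula (Prop.~\ref{taulogcumdeux}); this yields the factorization statement of Theorem~\ref{thsixp} directly, for \emph{all} $\mu\in\ID(\U,\boxtimes)$, with no truncation. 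Your proposal also asserts, without proof, the limiting ``free multiplicative semigroup equation'' for the moments of $\mu^{\boxtimes t}$, and your two approximation steps (compound Poisson truncation, and approximating $\mu$ outside the image of $\e_\boxplus$) would need uniform-in-$N$ error bounds that are not supplied; note in passing that $\e_\boxplus$ already reaches every element of $\ID(\U,\boxtimes)\cap\Ms$, the only exceptional measure being the Haar measure, for which $\Gamma_N(\lambda)$ is the Haar measure of $U(N)$ and the convergence is classical. By contrast, your outline of part~(2) — comparing Lévy--Khintchine/Hunt data along the two sides of the diagram — is essentially the paper's proof of Prop.~\ref{thfive}, up to the small but necessary care about the $2i\pi$ ambiguity of the drift handled there by Lemma~\ref{tripletpi}.
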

This result can be expressed by saying that the map $\e_\boxplus$ is the limit of the map $ \mathcal{E}_N$ as $N$ tends to infinity. The first assertion of the theorem above is a generalisation of a result of Biane: in~\cite{Biane1997a}, he proved that the spectral measure of a Brownian motion on $U(N)$ with adequately chosen speed converges to the distribution of a free unitary Brownian motion at each fixed time. The distribution of a Brownian motion is indeed an infinitely divisible measure at each time, and this convergence can be viewed as a particular case of Theorem~\ref{convwe}.

The proof itself of Theorem~\ref{trois} is interesting at least for two reasons. It is the first time that the free log-cumulants, originated in~\cite{Mastnak2010},  are used for proving an asymptotic result of random matrices. Secondly, the proof relies upon a key object, the symmetric group $\mathfrak{S}_n$, which is linked to both the combinatorics of free probability theory, and the computation of conjugate-invariant measures on $U(N)$. More precisely, in~\cite{LEVY2008}, Lévy established that the asymptotic distribution of a Brownian motion on the unitary group is closely related to the counting of paths in the Caley graph of $\mathfrak{S}_n$. Similarly, for all $\mu\in\mathcal{ID}(\mathbb{U},\boxtimes)$, the asymptotic distribution of a random matrix with law $\Gamma_N(\mu)$ involves the counting of paths in $\mathfrak{S}_n$, each step of which is given by the following generator (see Lemma~\ref{superlemme})$$T(\sigma)  =n L\kappa_1(\mu)\cdot \sigma+\displaystyle\sum_{\substack{2\leq m \leq n\\c\ m\textup{-cycle of }\mathfrak{S}_n\\c\sigma\preceq \sigma}} L\kappa_{m}\left(\mu \right)\cdot c\sigma, $$
where $(L\kappa_n(\mu))_{n\in \N^*}$ are the free log-cumulants of $\mu$.

In fact, Biane proved in~\cite{Biane1997a} a stronger result: the convergence of all finite dimensional distributions of the Brownian motion on $U(N)$ to the distribution of a free unitary Brownian motion. Similarly, a classical result of Voiculescu allows us to strengthen the previous asymptotic results of Theorem~\ref{convwe} as follows (see Section~\ref{wholly} for details).

\begin{theo}Let $( U_t)_{t\in\R_+}$ be a free unitary multiplicative Lévy process with marginal distributions $( \mu_t)_{t\in\R_+}$ in $\Ms$. For all $N\in \N^*$, let $(U_t^{(N)})_{t\in \R_+}$ be a Lévy process with marginal distributions $(\Gamma_N(\mu_t))_{t\in \R_+}$. Then, $(U_t^{(N)})_{t\in \R_+}$ converges to $( U_t)_{t\in\R_+}$ in non-commutative distribution. In other words, for each integer $n\geq 1$, for each non-commutative polynomial $P$ in $n$ variables, and each choice of $n$ non-negative reals $t_1,\ldots,t_n$, one has the convergence\label{whole}
$$\lim_{N\to \infty}\frac{1}{N}\E\left[\Tr\left(P\left(U_{t_1}^{(N)},\ldots,U_{t_n}^{(N)} \right)\right)\right]=\tau(P\left(U_{t_1},\ldots,U_{t_n} \right)). $$
Moreover, independent copies of $(U_t^{(N)})_{t\in \R_+}$ converge to freely independent copies of $( U_t)_{t\in\R_+}$.\label{ssss}\label{quatre}
\end{theo}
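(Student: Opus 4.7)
The plan is to reduce the theorem to Voiculescu's classical asymptotic freeness theorem for independent unitarily invariant random matrices. The starting observation is that a Lévy process on $U(N)$ can be reconstructed from its multiplicatively independent increments, and since $\Gamma_N$ takes values in $\ID_\inv(U(N),\circledast)$ and is a semigroup homomorphism, these increments are themselves unitarily invariant with laws of the form $\Gamma_N(\mu_{t-s})$.

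More precisely, fix $0=t_0<t_1<\cdots<t_n$ and set $X_k^{(N)}:=(U_{t_{k-1}}^{(N)})^{-1}U_{t_k}^{(N)}$. Since $(U_t^{(N)})$ has (stationary) independent increments, the matrices $X_1^{(N)},\ldots,X_n^{(N)}$ are classically independent and $X_k^{(N)}\sim\Gamma_N(\mu_{t_k-t_{k-1}})$. In particular each $X_k^{(N)}$ is unitarily invariant, and its mean empirical spectral distribution converges weakly to $\mu_{t_k-t_{k-1}}$ by Theorem~\ref{convwe}.

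The first step is then to apply Voiculescu's asymptotic freeness theorem to this family: the $X_k^{(N)}$ being classically independent, unitarily invariant, and having converging spectral distributions, the $n$-tuple $(X_1^{(N)},\ldots,X_n^{(N)})$ converges (in expectation, with respect to $\frac{1}{N}\E\Tr$) to a freely independent family $(X_1,\ldots,X_n)$ with respective marginal laws $\mu_{t_k-t_{k-1}}$. On the other hand, by the very definition of a free unitary multiplicative Lévy process, the increments $(U_{t_{k-1}})^{-1}U_{t_k}$ of $(U_t)$ form precisely such a freely independent family. Writing $U_{t_k}^{(N)}=X_1^{(N)}\cdots X_k^{(N)}$ and $U_{t_k}=X_1\cdots X_k$, one reads off the convergence of the joint non-commutative distribution of $(U_{t_1}^{(N)},\ldots,U_{t_n}^{(N)})$ to that of $(U_{t_1},\ldots,U_{t_n})$, which proves the first assertion. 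For $m$ independent copies $(U_t^{(N),i})_{i=1}^m$, the pooled collection of all their increments (over a common subdivision) is still a family of classically independent unitarily invariant matrices with converging spectra, so Voiculescu's theorem applied to the whole collection yields its asymptotic freeness, and hence asymptotic freeness of the $m$ families of partial products.

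The main obstacle is really to verify that the hypotheses of Voiculescu's theorem are genuinely satisfied — that the increments are at once classically independent and unitarily invariant — which is exactly what is provided by the Lévy structure of $(U_t^{(N)})$ together with the fact that $\Gamma_N$ maps into the conjugation-invariant infinitely divisible measures on $U(N)$. Once this reduction is clean, the rest of the argument is the standard fact that the non-commutative distribution of a free Lévy process is determined by its one-dimensional marginals through the free independence of its increments.
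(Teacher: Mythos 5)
Your strategy is the same as the paper's: decompose the processes into increments, observe that the matrix increments are independent, conjugation-invariant, and distributed according to $\Gamma_N$ of the corresponding free increments, and then invoke asymptotic freeness of independent unitarily invariant ensembles to match the freeness of the increments of $(U_t)_{t\in\R_+}$. There is, however, one genuine gap in the way you invoke the asymptotic freeness theorem. You feed it only classical independence, unitary invariance, and weak convergence of the \emph{mean} empirical spectral distributions (Theorem~\ref{convwe}). For convergence of joint moments in the sense of $\frac{1}{N}\E\left[\Tr(\cdot)\right]$ this is not enough: one also needs an asymptotic factorization property (vanishing of the covariances of the normalized traces). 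Indeed, take $X^{(N)}=\varepsilon I_N$ and $Y^{(N)}=\varepsilon' I_N$ with $\varepsilon,\varepsilon'$ independent uniform signs; these matrices are independent and unitarily invariant, their mean spectral measures converge to $\frac{1}{2}(\delta_1+\delta_{-1})$, yet $\frac{1}{N}\E\left[\Tr\left(X^{(N)}Y^{(N)}X^{(N)}Y^{(N)}\right)\right]=1$, whereas for free centered Bernoulli variables the corresponding mixed moment vanishes. So "independent $+$ unitarily invariant $+$ mean spectral convergence" does not by itself yield asymptotic freeness in expectation.

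This is precisely why Theorem~\ref{thsixp} is stated for products of normalized traces: the factorization $\lim_N\E\left[\frac{1}{N}\Tr(P_1(U_N))\cdots\frac{1}{N}\Tr(P_k(U_N))\right]=\prod_i\int_\U P_i\,\diff\mu$ is the extra hypothesis needed, and the paper's proof explicitly combines this factorization with the independence and conjugation invariance of the increments to deduce their asymptotic freeness (citing the standard references for that implication). Your argument is repaired simply by replacing the appeal to Theorem~\ref{convwe} by an appeal to the factorization property of Theorem~\ref{thsixp} applied to each increment $\Gamma_N(\mu_{t_k-t_{k-1}})$; with that substitution, your proof coincides with the paper's (the same remark applies verbatim to the case of several independent copies).
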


The rest of the paper is organized as follows. In Section~\ref{secone}, we give an overview of the theory of infinitely divisible measures. In Section~\ref{secthree}, we define $\e_\boxplus$ and $\Gamma$ and we prove Theorem~\ref{un}. Section~\ref{flc} is devoted to the notion of free log-cumulants, which is an important tool for the proof of the asymptotic results of this paper. Section~\ref{secfour} presents a description of convolution semigroups on the unitary group, and studies more precisely those which are invariant by conjugation. Section~\ref{secfive} links together  the measures on the Hermitian matrices with the measures on the unitary matrices through the stochastic exponentiation $ \mathcal{E}_N$. Finally, Section~\ref{secsix} provides the definition of the random matrix models $\Pi_N$ and $\Gamma_N$, and the proof of Theorem~\ref{trois} and Theorem~\ref{quatre}.


\section{Infinite divisibility for unidimensional convolutions}\label{secone}
In this section, we give the necessary background concerning $\ID(\mathbb{R},\ast)$, $\ID(\U,\circledast)$, $\ID(\mathbb{R},\boxplus)$ and $\ID(\U,\boxtimes)$. In particular, we give a description of the characteristic pair and the characteristic triplet of an infinitely divisible measure in each case.


We say that a sequence of finite measures $(\mu_n)_{n\in \N}$ on $\C$ \textit{converges weakly} to a measure $\mu$ if for all continuous and bounded complex function $f$, $\lim_{n\rightarrow \infty}\int_{\C}f\diff \mu_n = \int_{\C}f\diff \mu. $

\subsection{Classical infinite divisibility on $\R$}\label{adconv}

Let $\mu\in \ID(\R,\ast)$. There exists a sequence $(\mu_n)_{n\in \N^*}$ of probability measures such that, for all $n\in \N^*$,  $\mu_n^{\ast n}=\mu$. The important fact is that the measures
$$\diff \sigma_n(x)=n\frac{ x^2}{x^2+1}\mu_n(\diff x)$$
converge weakly to a measure $\sigma$ and the reals
$$\gamma_n=n\int_{\mathbb{R}} \frac{x}{x^2+1}\mu_n(\diff x)$$ converge to a constant $\gamma\in \mathbb{R}$.
%
%
%
The pair $(\gamma, \sigma)$ is known as the $\ast$-\textit{characteristic pair} for $\mu$ and it is uniquely determined by $\mu$. More generally, we have the following characterization. 
\begin{theorem}[\cite{Bercovici1999}]\label{critc}Let $\mu\in \ID(\R,\ast)$ with $\ast$-characteristic pair $(\gamma, \sigma)$.
Let $k_1<k_2<\cdots$ be natural numbers and  $(\mu_n)_{n\in \N^*}$ be a sequence of probability measures on $\R$. The following assertions are equivalent:
\begin{enumerate}
\item the measures $\underbrace{\mu_n\ast\cdots \ast \mu_n}_{k_n \text{ times}}$ converge weakly to $\mu$;
\item the measures  $$\diff \sigma_n(x)=k_n\frac{ x^2}{x^2+1}\mu_n(\diff x)$$
converge weakly to $\sigma$ and
$$\lim_{n\to \infty}k_n\int_{\mathbb{R}} \frac{x}{x^2+1}\mu_n(\diff x)=\gamma.$$
\end{enumerate}
\end{theorem}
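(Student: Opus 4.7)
The plan is to translate everything into the language of Fourier transforms and to invoke Lévy's continuity theorem together with the Lévy--Khintchine representation. Recall that any $\mu \in \ID(\R,\ast)$ with characteristic pair $(\gamma,\sigma)$ satisfies
\begin{equation*}
\log \widehat{\mu}(t) \;=\; i\gamma t + \int_{\R} F_t(x)\, \diff\sigma(x),\qquad F_t(x) := \frac{1+x^2}{x^2}\Bigl(e^{itx}-1-\frac{itx}{1+x^2}\Bigr),
\end{equation*}
with $F_t(0) := -t^2/2$ by continuous extension. Each $F_t$ is a bounded continuous function on $\R$ admitting a continuous extension to the one-point compactification, so weak convergence of finite measures entails convergence of their integrals against every $F_t$; moreover uniqueness of the Lévy--Khintchine triplet means that $(\gamma,\sigma)$ is determined by these integrals together with the coefficient of $t$. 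Setting $\gamma_n := k_n \int x/(x^2+1)\,\diff\mu_n(x)$, the elementary splitting $e^{itx}-1 = \frac{itx}{1+x^2} + \bigl(e^{itx}-1-\frac{itx}{1+x^2}\bigr)$ yields the key identity
\begin{equation*}
k_n\bigl(\widehat{\mu_n}(t)-1\bigr) \;=\; k_n\int_{\R}(e^{itx}-1)\,\diff\mu_n(x) \;=\; i\gamma_n\, t + \int_{\R} F_t(x)\, \diff\sigma_n(x).\qquad (\star)
\end{equation*}

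For the direction (2)$\Rightarrow$(1), assume $\sigma_n \to \sigma$ weakly and $\gamma_n \to \gamma$. Then $(\star)$ gives $k_n(\widehat{\mu_n}(t)-1) \to \log\widehat{\mu}(t)$ locally uniformly in $t$. In particular $\widehat{\mu_n}(t) = 1 + O(1/k_n)$ uniformly on compacts, so the expansion $\log(1+z) = z + O(|z|^2)$ yields $\widehat{\mu_n}(t)^{k_n} = \exp(k_n \log \widehat{\mu_n}(t)) \to \widehat{\mu}(t)$ pointwise, and Lévy's continuity theorem delivers the weak convergence of $\mu_n^{\ast k_n}$ to $\mu$.

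For the converse, Lévy's theorem gives $\widehat{\mu_n}(t)^{k_n} \to \widehat{\mu}(t)$ locally uniformly. Since the characteristic function of any infinitely divisible law is nowhere zero, one can take a continuous branch of the $k_n$-th root and extract $\widehat{\mu_n}(t) \to 1$ uniformly on compacts, hence $\mu_n \to \delta_0$ weakly. Inverting the expansion above then gives $k_n(\widehat{\mu_n}(t)-1) \to \log\widehat{\mu}(t)$ locally uniformly, so by $(\star)$ one has $i\gamma_n t + \int F_t\,\diff\sigma_n \to i\gamma t + \int F_t\,\diff\sigma$ for every $t$. The main obstacle is the passage from this pointwise-in-$t$ convergence of integrals to actual weak convergence $\sigma_n \to \sigma$ with $\gamma_n \to \gamma$. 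The classical device, borrowed from the Gnedenko--Kolmogorov theory recalled in~\cite{Bercovici1999}, is to average $(\star)$ in $t$ over a small interval $[-\varepsilon,\varepsilon]$: the oscillatory kernel $F_t$ is thereby replaced by a kernel of the form $(1 - \sin(\varepsilon x)/(\varepsilon x))(1+x^2)/x^2$, which is bounded below by a positive constant for $|x|>A$ and produces the tail bound needed to establish tightness of $(\sigma_n)$ and boundedness of $(\gamma_n)$. Uniqueness of the Lévy--Khintchine parametrization then identifies the unique subsequential limit, completing the argument.
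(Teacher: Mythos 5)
The paper offers no proof of this statement: it is quoted from Bercovici and Pata as background, so there is no internal argument to measure yours against. Your proposal is the standard classical (Gnedenko--Kolmogorov style) proof and is essentially correct as an outline. The identity $(\star)$ is exact; the direction (2)$\Rightarrow$(1) follows from it together with Lévy's continuity theorem (note that pointwise convergence in $t$ already suffices there, so the local uniformity you claim at that stage is unnecessary and not immediate from weak convergence alone); and the direction (1)$\Rightarrow$(2) is correctly organized around the three classical steps: first $\widehat{\mu_n}\to 1$ locally uniformly, hence $k_n(\widehat{\mu_n}-1)\to\log\widehat{\mu}$; then averaging the real part of $(\star)$ over $t\in[-\varepsilon,\varepsilon]$, producing the kernel $\bigl(1-\sin(\varepsilon x)/(\varepsilon x)\bigr)(1+x^2)/x^2$, whose strict positivity (with positive limits at $0$ and at infinity) bounds $\sigma_n(\R)$, and whose lower bound for $|x|\geq 2/\varepsilon$, combined with letting $\varepsilon\to 0$, gives the uniform tail estimate and hence tightness; finally, identification of every subsequential limit via uniqueness of the pair $(\gamma,\sigma)$, which also forces $\gamma_n\to\gamma$ without needing a separate boundedness argument. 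Two small inaccuracies are worth fixing. First, $F_t$ does \emph{not} extend continuously to the one-point compactification of $\R$ (it oscillates like $e^{itx}$ at infinity); this is harmless because the paper's notion of weak convergence of finite measures is tested against bounded continuous functions, and what your argument really needs is the tightness and mass bounds you establish. Second, the extraction of $\widehat{\mu_n}\to 1$ from $\widehat{\mu_n}^{k_n}\to\widehat{\mu}$ deserves a line of justification, since $\widehat{\mu_n}^{k_n}$ is only approximately equal to $\widehat{\mu}$: compare moduli to get $|\widehat{\mu_n}|\to 1$, then compare continuous arguments normalized at $t=0$ to conclude that the phases tend to $0$ locally uniformly.
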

In addition to~\cite{Bercovici1999}, we refer the reader to the very complete lecture notes~\cite{Barndorff2005}. We present now two additional properties of the \mbox{$\ast$-characteristic} pairs. Firstly, there is an one-to-one correspondence between {$\ast$-infinitely} divisible probability measures and pairs $(\gamma, \sigma)$. Indeed, for all finite measure $\sigma$ on $\mathbb{R}$, and all constant $\gamma\in \mathbb{R}$, there exists a unique {$\ast$-infinitely} divisible probability measure such that $(\gamma, \sigma)$ is the \mbox{$\ast$-characteristic} pair for $\mu$. Secondly, the \mbox{$\ast$-characteristic} pairs linearize the convolution: let $\mu_1$ and $\mu_2$ be two {$\ast$-infinitely} divisible measures with respective $\ast$-characteristic pairs $(\gamma_1, \sigma_1)$ and $(\gamma_2, \sigma_2)$. The measure $\mu_1\ast\mu_2$ is a \mbox{$\ast$-infinitely} divisible measure with $\ast$-characteristic pair $(\gamma_1+\gamma_2, \sigma_1+\sigma_2)$.

Let us review another, perhaps more classical, characterization of infinitely divisible measures. Let $\mu$ be {$\ast$-infinitely} divisible and $(\gamma, \sigma)$ be its $\ast$-characteristic pair. We set
\begin{equation}
a=\sigma(\{0\}),\ \rho(\diff x)=\frac{1+x^2}{x^2}\cdot 1_{\mathbb{R}\setminus\{0\}}(x) \sigma (\diff x),\ \text{and}\ \eta=\gamma+\int_{\mathbb{R}}x\left(1_{[-1,1]}(x)-\frac{1}{1+x^2}\right)\rho(\diff x).\label{rel}
\end{equation}
The triplet $(\eta,a,\rho)$ is called the $\ast$-\textit{characteristic triplet} for $\mu$. Observe that $\rho$ is such that the function $x\mapsto \min(1,x^2)$ is $\rho$-integrable and $\rho(\{0\})=0$. Such a measure is called a \textit{Lévy measure} on $\R$. Conversely, for all $(\eta,a,\rho)$ with $\eta\in \R$, $a\geq 0$ and $\rho$ a Lévy measure on $\R$, there exists a unique {$\ast$-infinitely} divisible probability measure such that $(\eta,a,\rho)$ is the $\ast$-characteristic triplet for $\mu$.

\begin{framed}
\begin{example}Here are three important classes of {$\ast$-infinitely} divisible measures:\label{exac}
\begin{enumerate}
\item For any constant $\eta$ in $\R$, the Dirac distribution $\delta_\eta$ is in $\mathcal{ID}(\mathbb{R},\ast)$, and its \mbox{$\ast$-characteristic} triplet is $(\eta,0,0)$;
\item For any constant $a>0$, the Gaussian distribution of variance $a$ is $$\mathcal{N}_a(\diff x)=\frac{1}{\sqrt{2\pi a}}e^{-\frac{x^2}{2a}}\diff x\in \mathcal{ID}(\mathbb{R},\ast)$$ whose $\ast$-characteristic triplet is $(0,a,0)$;
\item For any constant $\lambda>0 $ and any probability measure $\rho\in \mathcal{P}(\R)$, the compound Poisson distribution with rate $\lambda$ and jump distribution $\rho$ is $$\Poiss^*_{\lambda,\rho}=e^{-\lambda}\sum_{n\in \N}\frac{\lambda^n}{n!}\rho^{\ast n}\in \mathcal{ID}(\mathbb{R},\ast)$$ whose $\ast$-characteristic triplet is $(\lambda \int_{[-1,1]}x\rho(\diff x),0,\lambda \rho_{|\R\setminus \{0\}})$. One important particular case is when $\rho=\delta_1$: the Poisson distribution $\Poiss^*_{\lambda}$ of mean $\lambda$ is 
$$\Poiss^*_\lambda(\diff x)=\Poiss^*_{\lambda,\delta_1}(\diff x)=e^{-\lambda}\sum_{n\in \N}\frac{\lambda^n}{n!}\delta_n\in \mathcal{ID}(\mathbb{R},\ast).$$
\end{enumerate}
\end{example}
\end{framed}
\subsection{The Bercovici-Pata bijection}

In~\cite{Bercovici1999}, Bercovici and Pata proved that all results of the previous section stay true if one replaces the classical convolution $\ast$  by the free additive convolution $\boxplus$. This leads to the \textit{Bercovici-Pata bijection} $\Lambda$ from $\mathcal{ID}(\mathbb{R},\ast)$ to $\mathcal{ID}(\mathbb{R},\boxplus)$ which maps a \mbox{$\ast$-infinitely} divisible measure with \mbox{$\ast$-characteristic} pair $(\gamma, \sigma)$ to the {$\boxplus$-infinitely} divisible measure with \mbox{$\boxplus$-characteristic} pair $(\gamma, \sigma)$. Its importance is due to the following theorem.
\begin{theorem}[\cite{Bercovici1999}]
The Bercovici-Pata bijection $\Lambda$ has the following properties:\label{bercopata}
\begin{enumerate}
\item For all $\mu,\nu\in \mathcal{ID}(\mathbb{R},\ast)$, $\Lambda(\mu\ast \nu)=\Lambda(\mu)\boxplus \Lambda(\nu)$;
\item For all natural numbers $k_1<k_2<\cdots$,  all sequence $(\mu_n)_{n\in \N^*}$ of probability measures on $\R$ and all {$\ast$-infinitely} divisible measure $\mu$, the measures $\mu_n^{\ast k_n}$ converge weakly to $\mu$ if and only if the measures $\mu_n^{\boxplus k_n}$ converge weakly to $\Lambda(\mu)$.
\end{enumerate}
\end{theorem}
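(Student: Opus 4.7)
The plan is to deduce both assertions directly from the free analogue of Theorem~\ref{critc}, which the text takes as established (``all results of the previous section stay true if one replaces $\ast$ by $\boxplus$''). In particular, I will use the two facts that underpin the classical theory and carry over verbatim to the free setting: (i) every $\boxplus$-infinitely divisible measure is characterised by a unique $\boxplus$-characteristic pair $(\gamma,\sigma)$, and the map from pairs to measures is a bijection; (ii) the $\boxplus$-characteristic pair of $\mu_1 \boxplus \mu_2$ is the sum of the two individual pairs.

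For assertion~(1), let $\mu,\nu\in \ID(\R,\ast)$ have $\ast$-characteristic pairs $(\gamma_1,\sigma_1)$ and $(\gamma_2,\sigma_2)$, respectively. By the linearisation property recalled in Section~\ref{adconv}, the $\ast$-characteristic pair of $\mu\ast\nu$ is $(\gamma_1+\gamma_2,\sigma_1+\sigma_2)$. By definition of $\Lambda$, the $\boxplus$-characteristic pair of $\Lambda(\mu\ast\nu)$ is then also $(\gamma_1+\gamma_2,\sigma_1+\sigma_2)$, and $\Lambda(\mu)$ and $\Lambda(\nu)$ have respective $\boxplus$-characteristic pairs $(\gamma_1,\sigma_1)$ and $(\gamma_2,\sigma_2)$. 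By the free analogue of the linearisation property, $\Lambda(\mu)\boxplus\Lambda(\nu)$ has $\boxplus$-characteristic pair $(\gamma_1+\gamma_2,\sigma_1+\sigma_2)$ as well, and unicity of the measure attached to a pair concludes.

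For assertion~(2), let $(\gamma,\sigma)$ be the $\ast$-characteristic pair of $\mu$, so that $\Lambda(\mu)$ has $\boxplus$-characteristic pair $(\gamma,\sigma)$ by definition. Introduce the auxiliary measures and constants
\[
\diff\sigma_n(x)=k_n\frac{x^2}{x^2+1}\,\mu_n(\diff x),\qquad \gamma_n=k_n\int_{\R}\frac{x}{x^2+1}\,\mu_n(\diff x).
\]
Theorem~\ref{critc} asserts that $\mu_n^{\ast k_n}\wn \mu$ is equivalent to $\sigma_n$ converging weakly to $\sigma$ and $\gamma_n\to\gamma$. The free analogue applied to $\Lambda(\mu)$ gives that $\mu_n^{\boxplus k_n}\wn \Lambda(\mu)$ is equivalent to exactly the same two conditions on $(\sigma_n,\gamma_n)$, since $\Lambda(\mu)$ shares its characteristic pair with $\mu$. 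Thus both convergences are equivalent to a common criterion and therefore equivalent to each other.

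The only genuine obstacle is that the proof leans entirely on the free-probabilistic counterpart of Theorem~\ref{critc}; its derivation (done in~\cite{Bercovici1999}) requires translating the weak convergence of $\mu_n^{\boxplus k_n}$ into the pointwise convergence of Voiculescu's $R$-transforms on a common disk, and conversely recovering weak convergence from $R$-transform convergence together with a uniform tail control, the latter being precisely encoded by the measures $\sigma_n$ and constants $\gamma_n$. Once this analytic machinery is in place, the statement above reduces, as indicated, to a matching of characteristic pairs.
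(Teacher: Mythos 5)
Your argument is correct, but note that the paper contains no proof of this statement to compare against: it is quoted verbatim from \cite{Bercovici1999}, and the surrounding text merely records that ``all results of the previous section stay true'' for $\boxplus$. Granting that free analogue of Theorem~\ref{critc} (which the paper, like you, takes from the same reference), your derivation is the standard one and is sound: assertion~(1) follows because both $\ast$ and $\boxplus$ add characteristic pairs and a pair determines its infinitely divisible measure uniquely, and assertion~(2) follows because the classical criterion applied to $\mu$ and the free criterion applied to $\Lambda(\mu)$ reduce the two convergences to one and the same condition on the auxiliary data $\gamma_n$ and $\sigma_n$, since $\mu$ and $\Lambda(\mu)$ share their pair by the very definition of $\Lambda$. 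The caveat you flag yourself is the only substantive one: all the analytic content of the theorem (the $R$-transform arguments and tightness control needed for the free convergence criterion) is delegated to \cite{Bercovici1999}, so what you have written is a reduction to that cited result rather than a self-contained proof --- which matches how the paper treats the statement.
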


\begin{framed}
\begin{example}Here are the free analogues of the measures presented in Example~\ref{exac}:\label{exaf}
\begin{enumerate}
\item For any constant $\eta$ in $\R$, we have $\Lambda(\delta_\eta)=\delta_\eta\in \mathcal{ID}(\mathbb{R},\boxplus)$, and its $\boxplus$-characteristic triplet is $(\eta,0,0)$.
\item For any constant $a>0$, the semi-circular distribution of variance $a$ is $$\mathcal{S}_a(\diff x)=\frac{1}{2\pi a}\sqrt{4a-x^2}\cdot 1_{[-2\sqrt{a},2\sqrt{a}]}(x)\diff x\in \mathcal{ID}(\mathbb{R},\boxplus)$$ whose characteristic triplet is $(0,a,0)$. We have $\Lambda(\mathcal{N}_a)=\mathcal{S}_a$.
\item For any constant $\lambda >0$, the free Poisson distribution with mean $\lambda$, also called the Marçenko-Pastur distribution, is $$\Poiss^\boxplus_{\lambda,\delta_1}(\diff x)=\left\{\begin{array}{ll}\displaystyle (1-\lambda)\delta_0+\frac{1}{2\pi x}\sqrt{(x-a)(b-x)}1_{a\leq x\leq b}\diff x & \text{if }0\leq \lambda \leq 1, \\ \displaystyle\frac{1}{2\pi x}\sqrt{(x-a)(b-x)} 1_{a\leq x\leq b}\diff x& \text{if }\lambda >1,\end{array}\right.$$where $a=(1-\sqrt{\lambda})^2$ and $b=(1+\sqrt{\lambda})^2$. Its $\boxplus$-characteristic triplet is $(\lambda,0,\lambda \delta_1)$. More generally, for any constant $\lambda>0 $ and probability measure $\rho\in \mathcal{P}(\R)$, the free compound Poisson distribution with rate $\lambda$ and jump distribution $\rho$ is the measure $\Poiss^\boxplus_{\lambda,\rho}\in \mathcal{ID}(\mathbb{R},\boxplus)$ whose \mbox{$\boxplus$-characteristic} triplet is $(\lambda \int_{[-1,1]}x\rho(\diff x),0,\lambda \rho)$. We have $\Lambda(\Poiss^\ast_{\lambda,\rho})=\Poiss^\boxplus_{\lambda,\rho}$.
\end{enumerate}
\end{example}
\end{framed}

We finish this section with a technical lemma, which is a straightforward reformulation of Theorem~\ref{critc}, using the relation given by~\eqref{rel}.

\begin{lemma}\label{lemconv}Let $\mu\in \mathcal{ID}(\mathbb{R},\boxplus)$ and $(\eta,a,\rho)$ be its $\boxplus$-\textit{characteristic triplet}. Let $k_1<k_2<\cdots$ be natural numbers and $(\mu_n)_{n\in \N^*}$ a sequence of probability measures on $\R$ such that the measures $\mu_n ^{\boxplus k_n} $ converge weakly to $\mu$. Then, for all $f:\R\to \C$ continuous, bounded, and such that $f(x)\sim_{x\to0} f_0 x^2$, we have
$$\lim_{n\to \infty}k_n\int_\R f\diff\mu_n = \int_\R f \diff \rho+a f_0,\text{ and }\lim_{n\to \infty} k_n\int x1_{[-1,1]}(x)\diff \mu_n(x)=\eta.$$
\end{lemma}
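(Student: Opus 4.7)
The plan is to read both conclusions off Theorem~\ref{critc} (in its $\boxplus$-version, which is part of the Bercovici--Pata theorem): the hypothesis $\mu_n^{\boxplus k_n}\to\mu$ is equivalent to the weak convergence of $\sigma_n(\diff x):=k_n\frac{x^2}{x^2+1}\mu_n(\diff x)$ to the measure $\sigma$ in the $\boxplus$-characteristic pair $(\gamma,\sigma)$ of $\mu$, together with $k_n\int\frac{x}{x^2+1}\diff\mu_n\to\gamma$. The relations~\eqref{rel} then specify $a=\sigma(\{0\})$, $\rho=\frac{1+x^2}{x^2}\mathbf{1}_{\R\setminus\{0\}}\sigma$, and the explicit dependence of $\eta$ on $(\gamma,\rho)$. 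The strategy is therefore to push information at the level of $\sigma_n$ back to the level of $\mu_n$ via the change of weight $\frac{x^2+1}{x^2}$.

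For the first conclusion I would set $g(x):=\frac{x^2+1}{x^2}f(x)$ for $x\neq 0$, extended by $g(0)=f_0$. The assumption $f(x)\sim f_0 x^2$ at the origin makes $g$ continuous at $0$; at infinity $\frac{x^2+1}{x^2}\to 1$ and $f$ is bounded, so $g$ is bounded. Since $f(0)=0$, the identity
\[
k_n\int_\R f\,\diff\mu_n=\int_\R g\,\diff\sigma_n
\]
holds for every $n$. Applying the weak convergence $\sigma_n\to\sigma$ to the bounded continuous $g$ gives $\int g\,\diff\sigma_n\to\int g\,\diff\sigma$, and splitting $\sigma$ into its atom at $0$ (of mass $a$) and its restriction to $\R\setminus\{0\}$ (which, re-weighted by $\frac{1+x^2}{x^2}$, is $\rho$) yields exactly $af_0+\int f\,\diff\rho$.

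For the second conclusion I would start from~\eqref{rel}, which reads $\eta=\gamma+\int_\R h\,\diff\rho$ with $h(x):=x\mathbf{1}_{[-1,1]}(x)-\frac{x}{1+x^2}$. Since the hypothesis already provides $k_n\int\frac{x}{1+x^2}\diff\mu_n\to\gamma$, it suffices to prove $k_n\int h\,\diff\mu_n\to\int h\,\diff\rho$. The function $h$ is bounded, equals $\frac{x^3}{1+x^2}$ on $(-1,1)$ and $-\frac{x}{1+x^2}$ for $|x|>1$, so $h(x)=O(x^3)$ near $0$ and $h$ is continuous away from $\pm 1$. The same change of variables gives $k_n\int h\,\diff\mu_n=\int\tilde h\,\diff\sigma_n$ with $\tilde h(x):=\frac{x^2+1}{x^2}h(x)$, which is bounded on $\R$ (equal to $x$ on $(-1,1)$ and to $-1/x$ for $|x|>1$) and continuous on $\R\setminus\{\pm 1\}$.

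The main obstacle is precisely this last step: $\tilde h$ is not continuous at $\pm 1$, so weak convergence $\sigma_n\to\sigma$ does not give $\int\tilde h\,\diff\sigma_n\to\int\tilde h\,\diff\sigma$ directly when $\sigma$ has atoms at $\pm 1$. I would handle this by a Portmanteau-type approximation: choose $\varepsilon>0$ with $\pm 1\pm\varepsilon$ continuity points of $\sigma$ (possible since $\sigma$ has at most countably many atoms), apply the first part of the lemma (with $f_0=0$) to continuous bounded truncations of $\tilde h$ coinciding with it outside $(\pm 1-\varepsilon,\pm 1+\varepsilon)$, and let $\varepsilon\downarrow 0$, using the uniform boundedness of $\tilde h$ and the Lévy integrability $\int\min(1,x^2)\,\diff\rho<\infty$ to make the contribution from the shrinking neighbourhoods of $\pm 1$ arbitrarily small.
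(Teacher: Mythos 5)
Your reduction to Theorem~\ref{critc} (in its $\boxplus$ form) via the change of weight $\tfrac{x^2+1}{x^2}$ is exactly the route the paper intends --- it only declares the lemma a ``straightforward reformulation'' of Theorem~\ref{critc} using~\eqref{rel} --- and your proof of the first conclusion is complete and correct. The gap is in the last step of the second conclusion, precisely at the point you yourself single out as the main obstacle. You propose to truncate $\tilde h$ near $\pm 1$ and to make the contribution of the shrinking neighbourhoods negligible ``using the uniform boundedness of $\tilde h$ and the Lévy integrability $\int\min(1,x^2)\,\diff\rho<\infty$''. This cannot work: $\min(1,x^2)=1$ at $x=\pm1$, so Lévy integrability places no constraint on $\rho(\{\pm1\})$, and the relevant class contains measures with an atom at $1$ (the free Poisson law has $\boxplus$-characteristic triplet $(\lambda,0,\lambda\delta_1)$, Example~\ref{exaf}). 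If $\sigma(\{1\})=s>0$, then $\limsup_n\sigma_n\bigl((1-\varepsilon,1+\varepsilon)\bigr)\geq s$ for every admissible $\varepsilon$, and since $\tilde h$ jumps from $1$ to $-1$ across $x=1$ the error term does not tend to $0$ as $\varepsilon\downarrow0$.

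In fact no argument can close this gap, because the asserted limit is sensitive to the side from which mass accumulates at $\pm1$. Take $k_n=n$ and $\mu_n=(1-\tfrac1n)\delta_0+\tfrac1n\delta_{1+1/n}$: one checks with Theorem~\ref{critc} that $\sigma_n\to\tfrac12\delta_1$ and $\gamma_n\to\tfrac12$, so $\mu_n^{\boxplus n}$ converges weakly to $\Poiss^{\boxplus}_{1,\delta_1}$, whose triplet has $\eta=1$; yet $k_n\int_\R x1_{[-1,1]}(x)\,\diff\mu_n(x)=0$ for every $n$ because $1+1/n>1$ (replacing $\delta_{1+1/n}$ by $\delta_{1-1/n}$ gives the limit $1$). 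So the second assertion actually requires the additional hypothesis $\rho(\{\pm1\})=0$, or a continuous truncation in place of $x1_{[-1,1]}(x)$. This defect is inherited from the paper, which states the lemma without proof (and in its applications the discontinuous pieces recombine into continuous integrands, e.g.\ $ix1_{[-1,1]}(x)+(e^{ix}-1-ix1_{[-1,1]}(x))=e^{ix}-1$, which is how those applications can be salvaged); but your Portmanteau step, as written, claims to prove something false and must be flagged as such.
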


\subsection{Classical infinite divisibility on $\U$}As we will now see, the particularity of $\mathcal{ID}(\U,\circledast)$ is the existence of idempotent measures, a infinite class which has no equivalent in $\mathcal{ID}(\mathbb{R},\ast)$, $\mathcal{ID}(\mathbb{R},\boxplus)$ or $\mathcal{ID}(\U,\boxtimes)$. Our references in this section are~\cite{Chistyakov2008,Parthasarathy1967,Schatte1983}.

A probability measure $\mu$ on $\U$ is said to be idempotent if $\mu\circledast \mu=\mu$. Each compact subgroup of $\U$ leads to an idempotent measure given by its Haar measure. More concretely, let $m\in \N$. The $m$-th roots of unity form a subgroup of $\U$, whose Haar measure is denoted by $\lambda_m$. We have $\lambda_m\circledast \lambda_m=\lambda_m$ and consequently $\lambda_m\in \mathcal{ID}(\U,\circledast)$. We denote by $\lambda$, or $\lambda_\infty$, the Haar measure on $\U$, which is also {$\circledast$-infinitely} divisible. Fortunately, the measures $(\lambda_m)_{m\in\N\cup\{\infty\}}$ are the unique measures on $\U$ which are idempotent.

How to identify measures of $\mathcal{ID}(\U,\circledast)$ which are not idempotent? Recall that $ \Ms$ is the set of probability measures $\mu$ on $\U$ such that $\int_\U \zeta \diff \mu (\zeta)\neq0$. It is easy to see that measures in $ \Ms$ are not idempotent, with the exception of $\delta_1$. In fact, every measure in $\mathcal{ID}(\U,\circledast)$ factorizes into the product of an idempotent measure with a measure in $\mathcal{ID}(\U,\circledast)\cap \Ms$. For the study of $\mathcal{ID}(\U,\circledast)\cap \Ms$,
 it is useful to introduce the \textit{characteristic function}: for  all probability measure $\mu$ on $\U$, it is the function $\widehat{\mu}:\Z \rightarrow \C$ defined for all $k\in \Z$ by
$$
\widehat{\mu}(k)=\int_\U \zeta^k  \diff \mu(\zeta).$$
It is multiplicative for the convolution $\circledast$ in the sense that, for all $\mu,\nu$ probability measures on $\U$, and all $k\in \Z$, we have
\begin{equation}\widehat{\mu \circledast\nu}(k)=\widehat{\mu}(k)\cdot \widehat{\nu}(k).\label{ch}
\end{equation}
For all $m\in \N^*$ and $k\in \Z$, we obviously have $\widehat{\lambda_m}(k)= 1$ if $k$ is divisible by $m$ and $0$ if not. Using the characteristic function, we can now characterize the measures in $\mathcal{ID}(\U,\circledast)\cap \Ms$. Let $\mu\in \mathcal{ID}(\U,\circledast)\cap \Ms$. There exists a finite measure $\nu $ on $\U$ and a real $\alpha\in \R$ such that, for all $k\in \Z$,  $$\widehat{\mu}(k)=e^{ik \alpha }\exp\Big(\int_{\U}\underbrace{\frac{\zeta^k-1-i k \Im(\zeta)}{1-\Re(\zeta)}}_{=-k^2\text{ if }\zeta=1}\diff \nu (\zeta)\Big).$$ 
Unfortunately, the pair $(e^{i\alpha},\nu)$ is not unique, in contrast to what~\cite{Schatte1983} can suggest at first reading (see the end of the current section). We say that $(e^{i\alpha},\nu)$ is \textit{a $\circledast$-characteristic pair} for~$\mu$. Conversely, for all pair $(\omega,\nu)$ such that $\omega\in \U$ and $\nu$ is a finite measure on $\U$, there exists a unique {$\circledast$-infinitely} divisible measure $\mu$ which admits $(\omega,\nu)$ as a $\circledast$-characteristic pair.

Similarly to the additive case, we introduce now the characteristic triplet. Let $\mu\in \mathcal{ID}(\U,\circledast)\cap \Ms$ and let $(\omega,\nu)$ be a $\circledast$-characteristic pair for $\mu$. We set \begin{equation}b=2 \nu (\{1\})\ \text{ and }\ \upsilon(\diff \zeta)=\frac{1}{1-\Re \zeta}\cdot 1_{\mathbb{U}\setminus \{1\}}(\zeta) \nu (\diff \zeta).\end{equation}We have, for all $k\in \Z$,  $$\widehat{\mu}(k)=\omega^k\exp\Big(-\frac{1}{2}bk^2+\int_{\U}(\zeta^k-1-i k \Im(\zeta))\diff \upsilon (\zeta)\Big).$$ 
We say that $(\omega,b,\upsilon)$ is \textit{a $\circledast$-characteristic triplet} for $\mu$. Let us remark that $\upsilon(\{1\})=0$ and $\int_{\U}(1+ \Re(\zeta))\diff \upsilon (\zeta)<+\infty$. Such a measure is called a \textit{Lévy measure} on $\U$. As expected, for all $(\omega,b,\upsilon)$ with $\omega\in \U$, $b\geq 0$ and $\upsilon$ a Lévy measure on $\U$, there exists a unique {$\circledast$-infinitely} divisible probability measure such that $(\omega,b,\upsilon)$ is a $\circledast$-characteristic triplet for $\mu$.
Moreover, for all $\mu_1$ and $\mu_2$ be two {$\circledast$-infinitely} divisible measures with $\circledast$-characteristic triplets $(\omega_1,b_1,\upsilon_1)$ and $(\omega_2,b_2,\upsilon_2)$, we see thanks to~\eqref{ch} that $\mu_1\circledast\mu_2\in \mathcal{ID}(\U,\circledast)\cap \Ms$ with $\circledast$-characteristic triplet $(\omega_1\omega_2,b_1+b_2,\upsilon_1+\upsilon_2)$.

To sum up the previous discussion, for all $\mu \in \mathcal{ID}(\U,\circledast)$,  there exist $m\in \N\cup\{\infty\}$, $\omega\in \U$ and $\nu$ a finite measure on $\U$ such that, for all $k\in \Z$,  $$\widehat{\mu}(k)=\widehat{\lambda_m}(k)\cdot \omega^k\exp\Big(\int_{\U}\underbrace{\frac{\zeta^k-1-i k \Im(\zeta)}{1-\Re(\zeta)}}_{=-k^2\text{ if }\zeta=1}\diff \nu (\zeta)\Big).$$ 
\begin{framed}
\begin{example}Here again, we can distinguish three classes of {$\circledast$-infinitely} divisible measures:\label{exmc}
\begin{enumerate}
\item For any constant $\omega\in \U$, $(\omega,0,0)$ is a $\circledast$-characteristic triplet of the Dirac distribution $\delta_\omega\in \mathcal{ID}(\U,\circledast)$;
\item For any constant $b>0$, the wrapped Gaussian distribution of parameter $b$ is $\e_\ast(\mathcal{N}_b)\in \mathcal{ID}(\U,\circledast)$ whose one $\ast$-characteristic triplet is $(1,b,0)$;
\item For any constant $\lambda>0 $ and any probability measure $\upsilon $ on $\U$, the compound Poisson distribution with rate $\lambda$ and jump distribution $\upsilon$ is $$\Poiss^\circledast_{\lambda,\upsilon}=e^{-\lambda}\sum_{n\in \N}\frac{\lambda^n}{n!}\upsilon^{\circledast n}\in \mathcal{ID}(\U,\circledast)$$ whose one $\circledast$-characteristic triplet is $(\exp(i\lambda \int_{\U}\Im \diff \upsilon),0,\lambda \upsilon_{|\U\setminus \{1\}})$.
\end{enumerate}
\end{example}
\end{framed}
We give now a case of {$\circledast$-infinitely} divisible measure which admits two different \mbox{$\circledast$-characteristic} pairs. Set $$\mu=e^{-\pi}\left(\frac{\cosh(\pi)+1}{2}\delta_1+\frac{\cosh(\pi)-1}{2}\delta_{-1}+\frac{\sinh(\pi)}{2}\delta_i+\frac{\sinh(\pi)}{2}\delta_{-i}\right).$$
For all $n\in \Z$, we have $\widehat{\mu}(4n)=1$, $\widehat{\mu}(4n+1)=\widehat{\mu}(4n+3)=e^{-\pi}$ and $\widehat{\mu}(4n+2)=e^{-2\pi}$. It is immediate that, for $\upsilon=\pi\delta_i$ or $\upsilon=\pi\delta_{-i}$, we have
$$\widehat{\mu}(n)=\exp\Big(\int_{\U}(\zeta^n-1-i n \Im(\zeta))\diff \upsilon (\zeta)\Big).$$
Thus, the measure $\mu$ admits $(1,0,\pi\delta_i)$ and $(1,0,\pi\delta_{-i})$ as $\circledast$-characteristic triplets. One can also see~\cite{Chistyakov2008} for others examples.

\subsection{The convolution $\boxtimes$ and the $S$-transform}\label{SSS}The free multiplicative convolution $\boxtimes$ can be described succinctly in terms of the $S$-transform. Let us explain how it works.

Let $\mu$ be a finite measure on $\U$. For all $k\in \N$, we set $m_k(\mu)=\int_\C \zeta^k\diff \mu (\zeta)$, which is finite, and we call $(m_k(\mu))_{k\in \N}$ the \textit{moments}  of $\mu$. We consider the formal power series $$M_\mu(z)=\sum_{k=0}^\infty m_k(\mu)z^k.$$
Let us assume that $\mu\in \Ms$. We define $S_\mu$, the \textit{$S$-transform} of $\mu$,  to be the formal power series such that $z S_\mu(z)/(1+z)$ is the inverse under composition of $M_\mu(z)-1$. The $S$-transform is a $\boxtimes$-homomorphism (see~\cite{Bercovici1992}): for all $\mu$ and $\nu\in  \Ms$, $$S_{\mu\boxtimes\nu}=S_\mu\cdot S_\nu.$$
For all $\mu \in  \Ms$, the series $S_\mu(z)$ is convergent in a neighbourhood of $0$, and we can therefore identify $S_\mu$ with a function which is analytic in a neighborhood of zero. Conversely, a function which is analytic in a neighborhood of zero is the $S$-transform of a unique measure of $ \Ms$. Sometimes it will be convenient to use the function
$$ \Sigma_\mu(z) = S_\mu(z/(1-z))$$
which is also analytic in a neighborhood of $0$.

\subsection{Free infinite divisibility on $\U$}For the free multiplicative convolution, the existence of different proper subgroups of $\U$ does not imply the existence of different idempotent measures. Indeed, the Haar measure $\lambda$ and $\delta_1$ are the unique probability measures on $\U$ which are idempotent. Moreover, $\lambda$ is an absorbing element for $\boxtimes$ and it is the unique {$\boxtimes$-infinitely} divisible measure in $  \mathcal{ID}(\U,\boxtimes)\setminus \Ms$ according to~\cite{Bercovici1992}. Consequently, we will focus our study on $  \mathcal{ID}(\U,\boxtimes)\cap \Ms$.

Let $\mu\in  \Ms$ be a {$\boxtimes$-infinitely} divisible measure. From Theorem~6.7. of~\cite{Bercovici1992}, there exists a unique finite measure $\nu \in \Mt$ and a real $\alpha\in \R$ such that  $$\Sigma_\mu(z) =\exp\left(-i\alpha+\int_{\U}\frac{1+\zeta z}{1-\zeta z}\diff \nu (\zeta)\right).$$
The pair $(e^{i\alpha}, \nu)$ is called the $\boxtimes$-\textit{characteristic pair} for $\mu$, and, on the contrary to the classical case, it is uniquely determined by $\mu$. We have
\begin{equation}S_\mu(z) =e^{-i\alpha}\exp\left(\int_{\U}\frac{1+z+\zeta z}{1+z-\zeta z}\diff \nu (\zeta)\right).\label{Ssup}\end{equation}
We observe that, for $\zeta\neq 1$, we have
$$\frac{1+z+\zeta z}{1+z-\zeta z}=\frac{1}{1-\Re \zeta}\left(i \Im (\zeta)+\frac{1-\zeta}{1+z(1-\zeta)}\right) ,$$
which implies that, defining $\omega=e^{i\alpha}$, $b=2 \nu (\{1\})$ and  $\upsilon(\diff \zeta)=\frac{1}{1-\Re \zeta}\cdot 1_{\mathbb{U}\setminus \{1\}}(\zeta) \nu (\diff \zeta)$,
we have
\begin{equation}
S_\mu(z) =\omega^{-1}\exp\left(\frac{b}{2}+b z+\int_{\U}i \Im (\zeta)+\frac{1-\zeta}{1+z(1-\zeta)}\diff \upsilon (\zeta)\right).\label{Sinf}
\end{equation}
We will call $(\omega,b,\upsilon)$ the $\boxtimes$-\textit{characteristic triplet} for $\mu$. Conversely, for all triplet $(\omega,b,\upsilon)$ such that $\omega\in \U$, $b\in \R^+$ and $\upsilon$ is a Lévy measure on $\U$, there exists a unique {$\boxtimes$-infinitely} divisible measure $\mu$ whose $\boxtimes$-characteristic triplet is $(\omega,b,\upsilon)$. Indeed, the $S$-transform defined by~\eqref{Sinf} is analytic in a neighborhood of zero, so it is the $S$-transform of a unique measure $\mu\in  \Ms$. Moreover, if we define $$v(z)=-\Log(\omega)+\frac{b}{2}+b z+\int_{\U}i \Im (\zeta)+\frac{1-\zeta}{1+z(1-\zeta)}\diff \upsilon (\zeta)$$
using the principal value $\Log$, then we have $S_\mu(z)=\exp(v(z))$. According to Theorem~6.7. of~\cite{Bercovici1992}, $\mu$ is therefore {$\boxtimes$-infinitely} divisible.

Let $\mu_1,\mu_2\in  \Ms$ be two {$\boxtimes$-infinitely} divisible measures with respective $\boxtimes$-characteristic triplets $(\omega_1,b_1,\upsilon_1)$ and $(\omega_2,b_2,\upsilon_2)$. The measure $\mu_1\boxtimes\mu_2\in  \Ms$ is a {$\boxtimes$-infinitely} divisible measure with $\boxtimes$-characteristic triplet $(\omega_1\omega_2,b_1+b_2,\upsilon_1+\upsilon_2)$.
\begin{framed}
\begin{example}The three classes of {$\boxtimes$-infinitely} divisible measures are:\label{exmf}
\begin{enumerate}
\item For any constant $\omega\in \U$, $(\omega,0,0)$ is a $\boxtimes$-characteristic triplet of the Dirac distribution $\delta_\omega\in \mathcal{ID}(\U,\boxtimes)$;
\item For any constant $b>0$, the measure on $\U$ analogous to the Gaussian distribution law is the measure $\mathcal{B}_b\in \mathcal{ID}(\U,\circledast)$ whose $\ast$-characteristic triplet is $(1,b,0)$; it is the law of a free unitary Brownian motion at time $b$;
\item For any constant $\lambda>0 $ and any probability measure $\upsilon$ on $\U$, the free compound Poisson distribution with rate $\lambda$ and jump distribution $\upsilon$ is the measure $\Poiss^\boxtimes_{\lambda,\upsilon}\in \mathcal{ID}(\U,\boxtimes)$ whose $\boxtimes$-characteristic triplet is $(\exp(i\lambda \int_{\U}\Im \diff \upsilon),0,\lambda \upsilon_{|\U\setminus \{1\}})$.
\end{enumerate}
\end{example}
\end{framed}

\section{Homomorphisms between $\ID(\mathbb{R},\ast)$, $\ID(\U,\circledast)$, $\ID(\mathbb{R},\boxplus)$ and $\ID(\U,\boxtimes)$.}\label{secthree}

In this section, we define $\e_\boxplus$ and $\Gamma$ and prove Theorem~\ref{un}. The definitions and the commutativity of~\eqref{diag} is a routine program. The very difficulty consists in proving the first item of Theorem~\ref{un}, or equivalently Theorem~\ref{thone}. We shall do it in Section~\ref{limitt}. 

\subsection{Definitions of $\e_\boxplus$ and $\Gamma$}In order to motivate the definition of $\e_\boxplus$, we start by indicating how a $\ast$-characteristic triplet is transformed by the homomorphism $\e_{\ast}$.

Let us recall that, for all measure $\mu$ on $\R$, $\e_{\ast}(\mu)$ denotes the push-forward measure of $\mu$ by the map $\e:x\mapsto e^{ix}$. Let us denote by $\e_{\ast}(\mu)_{| \U \setminus \{1\}}$ the measure induced by $\e_{\ast}(\mu)$ on $\U \setminus \{1\}$.
\begin{proposition}For all $\mu\in \ID(\mathbb{R},\ast)$ with $\ast$-characteristic triplet $(\eta, a, \rho)$,\label{exth}
\begin{equation*}
(\omega,b,\upsilon)=\left(\exp\left(i\eta+i\int_{\mathbb{R}}(\sin(x)-1_{[-1,1]}(x) x) \ \rho(\diff x)\right), a, \e_{\ast}(\rho)_{| \U \setminus \{1\}}\right)
\end{equation*}
is a $\circledast$-characteristic triplet of $\e_{\ast}(\mu)$. 
\end{proposition}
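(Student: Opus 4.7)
The plan is to verify the identity at the level of the characteristic function on $\U$, reducing it to the classical Lévy-Khintchine representation of $\hat{\mu}$ on $\R$. Since the exponential map $\e:\R\to\U$ is a group homomorphism, the push-forward $\e_\ast$ sends $\ast$ to $\circledast$, so $\e_\ast(\mu)\in\ID(\U,\circledast)$. The key observation is that the $\U$-side characteristic function is nothing but the restriction of the classical one to integers: for every $k\in\Z$,
$$\widehat{\e_\ast(\mu)}(k)=\int_\U \zeta^k\,\diff\e_\ast(\mu)(\zeta)=\int_\R e^{ikx}\,\mu(\diff x)=\hat\mu(k).$$
In particular $\widehat{\e_\ast(\mu)}(1)=\hat\mu(1)\neq 0$, so $\e_\ast(\mu)\in\Ms$ and its $\circledast$-characteristic triplet is well-defined.

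Next I plug into $\hat\mu(k)$ the classical Lévy-Khintchine representation in terms of $(\eta,a,\rho)$,
$$\hat\mu(k)=\exp\Bigl(ik\eta-\tfrac{1}{2}ak^2+\int_\R \bigl(e^{ikx}-1-ikx\,1_{[-1,1]}(x)\bigr)\,\rho(\diff x)\Bigr),$$
and rearrange it into the template
$$\omega^k\exp\Bigl(-\tfrac{1}{2}bk^2+\int_\U\bigl(\zeta^k-1-ik\Im\zeta\bigr)\,\diff\upsilon(\zeta)\Bigr)$$
of Section~2.3. By the push-forward formula, the change of variable $\zeta=e^{ix}$ turns the $\U$-integral against $\e_\ast(\rho)_{|\U\setminus\{1\}}$ into $\int_\R(e^{ikx}-1-ik\sin x)\rho(\diff x)$; the removal of the singleton $\{1\}$ in $\upsilon$ is harmless because the integrand $\zeta^k-1-ik\Im\zeta$ vanishes at $\zeta=1$, so the points $x\in 2\pi\Z$ do not contribute. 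Setting $b=a$ and $\upsilon=\e_\ast(\rho)_{|\U\setminus\{1\}}$ matches the Gaussian and jump parts, and the remaining linear-in-$k$ drift forces
$$k\log\omega=ik\eta+ik\int_\R\bigl(\sin x-x\,1_{[-1,1]}(x)\bigr)\,\rho(\diff x),$$
which gives exactly the claimed expression for $\omega$.

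What remains is to check that the objects on the right-hand side are well-defined. For the integral in the definition of $\omega$, the integrand is $O(x^3)$ as $x\to 0$ (hence $\rho$-integrable on $[-1,1]$ since $\int\min(1,x^2)\,\diff\rho<\infty$), and it is bounded outside $[-1,1]$ where $\rho$ is finite. For $\upsilon$, one has $\int_\U(1-\Re\zeta)\,\diff\upsilon=\int_\R(1-\cos x)\,\rho(\diff x)<\infty$ by the same Lévy-measure bound ($1-\cos x=O(x^2)$ near $0$, bounded elsewhere), together with $\upsilon(\{1\})=0$ by construction, so $\upsilon$ is indeed a Lévy measure on $\U$.

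The only subtle point—and hence the sole obstacle—is the bookkeeping between the two truncation conventions: the classical side uses the cutoff $x\,1_{[-1,1]}(x)$, whereas the $\circledast$-side uses $\sin x=\Im(e^{ix})$, and the discrepancy $\sin x-x\,1_{[-1,1]}(x)$ must be absorbed into $\omega$. Once one sees that this discrepancy is $\rho$-integrable and that the restriction $\upsilon=\e_\ast(\rho)_{|\U\setminus\{1\}}$ loses no information thanks to the vanishing of $\zeta^k-1-ik\Im\zeta$ at $\zeta=1$, the proposition reduces to a direct identification of the two exponents.
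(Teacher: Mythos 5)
Your proposal is correct and follows essentially the same route as the paper: evaluate the classical Lévy--Khintchine formula for $\hat\mu$ at integers, rewrite $e^{ikx}-1-ikx1_{[-1,1]}(x)$ as $(e^{ikx}-1-ik\sin x)+ik(\sin x - x1_{[-1,1]}(x))$, and recognize the $\circledast$-triplet template with $\zeta=e^{ix}$. The extra integrability checks and the remark that $\zeta^k-1-ik\Im\zeta$ vanishes at $\zeta=1$ are details the paper leaves implicit, and they are fine.
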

\begin{proof}First of all, the Fourier transform of a {$\ast$-infinitely} divisible measure is well-known (see \cite{Bercovici1999,Sato1999}): for all $\theta\in \R$, we have
\begin{equation*}
\int_\R e^{i\theta x}\diff\mu(x)=\exp\left(i\eta \theta -\frac{1}{2}a\theta^2+\int_\R (e^{i\theta x}-1-i\theta x 1_{[-1,1]}(x))\diff \rho(x)\right).\label{char}
\end{equation*}
Let $n\in \N$. We have
\begin{align*}
&\widehat{\e_{\ast}(\mu)}(n)=\int_\U \zeta^n \diff (\e_{\ast}(\mu))(\zeta)=\int_\R e^{inx}\diff \mu(x)\\
&\hspace{1cm} =\exp\left(i\eta n -\frac{1}{2}an^2+\int_\R (e^{in x}-1-in x 1_{[-1,1]}(x))\diff \rho(x)\right)\\
&\hspace{1cm} =\exp\left(i\eta n+in \int_{\mathbb{R}}(\sin(x)-1_{[-1,1]}(x) x) \ \rho(\diff x)-\frac{1}{2}an^2+\int_\R (e^{in x}-1-in \sin(x))\diff \rho(x)\right)\\
&\hspace{1cm} =\omega^n\exp\Big(-\frac{1}{2}bn^2+\int_{\U}(\zeta^n-1-i n \Im(\zeta))\diff \upsilon (\zeta)\Big),
\end{align*}
which proves that $(\omega,b,\upsilon)$ is a $\circledast$-characteristic triplet of $\e_{\ast}(\mu)$.
\end{proof}

We define $\e_{\boxplus}:\ID(\mathbb{R},\boxplus)\to \ID(\mathbb{U},\boxtimes)$ by analogy with the previous proposition.
\begin{definition}\label{edef}For all $\mu\in \ID(\mathbb{R},\boxplus)$ with $\boxplus$-characteristic triplet $(\eta, a, \rho)$, we define $\e_{\boxplus}(\mu)$ to be the {$\boxtimes$-infinitely} divisible measure on $\mathbb{U}$ with $\boxtimes$-characteristic triplet \begin{equation}
(\omega,b,\upsilon)=\left(\exp\left(i\eta+i\int_{\mathbb{R}}(\sin(x)-1_{[-1,1]}(x) x) \ \rho(\diff x)\right), a, \e_{\ast}(\rho)_{| \U \setminus \{1\}}\right).\label{fne}
\end{equation}
\end{definition}
\begin{proposition}
For all $\mu$ and $\nu\in \ID(\mathbb{R},\boxplus)$, we have $\e_{\boxplus}(\mu\boxplus \nu)=\e_{\boxplus}(\mu)\boxtimes \e_{\boxplus}( \nu) $.
\end{proposition}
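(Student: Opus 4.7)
The plan is to reduce everything to the linearity of the characteristic triplet under $\boxplus$ and $\boxtimes$, which has already been established in the excerpt (see Section~2.1 for $\boxplus$, and the end of Section~2.5 for $\boxtimes$). Since $\e_\boxplus$ is defined purely through the triplet, the homomorphism property should fall out of a direct computation.

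More precisely, write the $\boxplus$-characteristic triplets of $\mu$ and $\nu$ as $(\eta_1,a_1,\rho_1)$ and $(\eta_2,a_2,\rho_2)$. First I would invoke the linearization property recalled just after Theorem~\ref{critc} (transferred to $\boxplus$ via the Bercovici–Pata bijection of Theorem~\ref{bercopata}, which preserves triplets) to get that $\mu\boxplus\nu$ has $\boxplus$-characteristic triplet
\[
(\eta_1+\eta_2,\ a_1+a_2,\ \rho_1+\rho_2).
\]
Applying Definition~\ref{edef} then produces the $\boxtimes$-characteristic triplet $(\omega,b,\upsilon)$ of $\e_\boxplus(\mu\boxplus\nu)$ with
\[
\omega=\exp\!\Bigl(i(\eta_1+\eta_2)+i\!\int_{\R}\!(\sin x - 1_{[-1,1]}(x)\,x)\,(\rho_1+\rho_2)(\diff x)\Bigr),\quad b=a_1+a_2,\quad \upsilon=\e_\ast(\rho_1+\rho_2)_{|\U\setminus\{1\}}.
\]

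Next I would compute the $\boxtimes$-characteristic triplet of $\e_\boxplus(\mu)\boxtimes\e_\boxplus(\nu)$. By Definition~\ref{edef}, $\e_\boxplus(\mu)$ has triplet $(\omega_1,a_1,\e_\ast(\rho_1)_{|\U\setminus\{1\}})$ and $\e_\boxplus(\nu)$ has triplet $(\omega_2,a_2,\e_\ast(\rho_2)_{|\U\setminus\{1\}})$ with the obvious $\omega_j$. The additivity of the $\boxtimes$-characteristic triplet under $\boxtimes$ (the last sentence of Section~2.5) then gives the triplet of $\e_\boxplus(\mu)\boxtimes\e_\boxplus(\nu)$ as $(\omega_1\omega_2,\ a_1+a_2,\ \e_\ast(\rho_1)_{|\U\setminus\{1\}}+\e_\ast(\rho_2)_{|\U\setminus\{1\}})$. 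Comparison with the previous paragraph gives identical $b$-coordinates; the $\upsilon$-coordinates agree because the push-forward $\e_\ast$ is linear in the measure; and the $\omega$-coordinates agree because $\exp$ turns the sum of the two integrals (and the sum $\eta_1+\eta_2$) into the product $\omega_1\omega_2$, using also the linearity of the integral in the integrating measure.

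Finally, I would conclude by uniqueness: two $\boxtimes$-infinitely divisible measures sharing the same $\boxtimes$-characteristic triplet must coincide, as recalled in Section~2.5. There is no real obstacle here — the statement is essentially a tautology once one unwinds the definition of $\e_\boxplus$ and uses that both $\boxplus$ and $\boxtimes$ are linearized by their respective triplets; the only care needed is to make sure the restriction to $\U\setminus\{1\}$ commutes with the sum of measures, which is immediate.
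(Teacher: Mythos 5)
Your proposal is correct and follows essentially the same route as the paper: both compute the $\boxplus$-characteristic triplet of $\mu\boxplus\nu$ as $(\eta_1+\eta_2,a_1+a_2,\rho_1+\rho_2)$, apply Definition~\ref{edef}, use additivity of $\boxtimes$-characteristic triplets under $\boxtimes$, and conclude by uniqueness of the measure attached to a triplet. Your version merely spells out the coordinate-by-coordinate comparison (the exponential turning sums into products, linearity of $\e_\ast$ and of the restriction to $\U\setminus\{1\}$) that the paper leaves implicit.
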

\begin{proof}Let us denote by $(\eta_1, a_1, \rho_1)$ and $(\eta_2, a_2, \rho_2)$ the respective $\boxplus$-characteristic triplets of $\mu$ and $\nu$. The $\boxtimes$-characteristic triplet of $\mu\boxplus \nu$ is $(\eta_1+\eta_2, a_1+a_2, \rho_1+\rho_2)$. As a consequence, denoting by $(\omega_1, b_1,\upsilon_1)$ and $(\omega_2, b_2,\upsilon_2)$ the $\boxtimes$-characteristic triplets of $\e_{\boxplus}(\mu)$ and $\e_{\boxplus}(\nu)$ defined by~\eqref{fne}, $(\omega_1\omega_2, b_1+b_2,\upsilon_1+\upsilon_2)$  is the $\boxtimes$-characteristic triplet of both $\e_{\boxplus}(\mu\boxplus \nu)$ and $\e_{\boxplus}(\mu)\boxtimes \e_{\boxplus}( \nu)$.
\end{proof}
The definition of $\Gamma:\ID(\U,\boxtimes)\to \ID(\U,\circledast)$ is even simpler.
\begin{definition}For all $\mu\in \ID(\U,\boxtimes)\cap \Ms $ with characteristic triplet $(\omega,b,\upsilon)$, we define $\Gamma(\mu)$ to be the {$\circledast$-infinitely} divisible measure on $\mathbb{U}$ with characteristic triplet $(\omega,b,\upsilon)$. Moreover, for $\lambda$ being the Haar measure of $\mathbb{U}$, we set $\Gamma(\lambda)=\lambda$. \label{gammadef}
\end{definition}

\begin{proposition}
For all $\mu$ and $\nu\in \ID(\U,\boxtimes)$, we have $\Gamma(\mu\boxtimes \nu)=\Gamma(\mu)\circledast \Gamma( \nu)$ .
\end{proposition}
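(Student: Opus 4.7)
The plan is to split into cases based on whether $\mu$ and $\nu$ lie in $\Ms$ or equal the Haar measure $\lambda$. Recall from Section~2.5 that $\ID(\U,\boxtimes)$ is the disjoint union of $\ID(\U,\boxtimes)\cap\Ms$ and $\{\lambda\}$, so only these cases need to be considered.

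\textbf{Main case: $\mu,\nu\in\ID(\U,\boxtimes)\cap\Ms$.} Let $(\omega_1,b_1,\upsilon_1)$ and $(\omega_2,b_2,\upsilon_2)$ be the $\boxtimes$-characteristic triplets of $\mu$ and $\nu$ respectively. By the additivity of $\boxtimes$-characteristic triplets proved at the end of Section~2.5, the product $\mu\boxtimes\nu$ lies in $\Ms$ and has $\boxtimes$-characteristic triplet $(\omega_1\omega_2,b_1+b_2,\upsilon_1+\upsilon_2)$. By Definition~\ref{gammadef}, the measure $\Gamma(\mu\boxtimes\nu)$ is therefore the unique element of $\ID(\U,\circledast)\cap\Ms$ admitting $(\omega_1\omega_2,b_1+b_2,\upsilon_1+\upsilon_2)$ as a $\circledast$-characteristic triplet. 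On the other side, $\Gamma(\mu)$ and $\Gamma(\nu)$ are, by definition, the $\circledast$-infinitely divisible measures with $\circledast$-characteristic triplets $(\omega_1,b_1,\upsilon_1)$ and $(\omega_2,b_2,\upsilon_2)$; by the additivity of $\circledast$-characteristic triplets proved in Section~2.3 (via the multiplicativity~\eqref{ch} of the characteristic function), $\Gamma(\mu)\circledast\Gamma(\nu)$ also has $\circledast$-characteristic triplet $(\omega_1\omega_2,b_1+b_2,\upsilon_1+\upsilon_2)$. The uniqueness of the $\circledast$-characteristic triplet then yields the desired equality.

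\textbf{Degenerate case: at least one factor equals $\lambda$.} Here one uses that $\lambda$ is an absorbing element on both sides. For $\boxtimes$, this is stated in Section~2.5, so $\lambda\boxtimes\nu=\nu\boxtimes\lambda=\lambda$ for every $\nu\in\ID(\U,\boxtimes)$, whence $\Gamma(\lambda\boxtimes\nu)=\Gamma(\lambda)=\lambda$. For $\circledast$, the same property follows immediately from $\widehat{\lambda}(k)=\mathbf{1}_{k=0}$ and the multiplicativity~\eqref{ch}: $\widehat{\lambda\circledast\Gamma(\nu)}(k)=\widehat{\lambda}(k)\widehat{\Gamma(\nu)}(k)=\widehat{\lambda}(k)$, so $\lambda\circledast\Gamma(\nu)=\lambda=\Gamma(\lambda)\circledast\Gamma(\nu)$. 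The argument for $\nu=\lambda$ is symmetric, and the case $\mu=\nu=\lambda$ is a special instance.

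This proof is essentially bookkeeping: the nontrivial content has already been absorbed into the constructions of Sections~2.3 and~2.5, namely the existence, uniqueness, and additivity of the characteristic triplets for both $\boxtimes$ and $\circledast$. The only subtlety to watch for is the treatment of the idempotent $\lambda$, which does not fit the triplet framework and must be handled separately; fortunately its absorbing character on both sides makes this immediate.
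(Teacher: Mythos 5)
Your proof is correct and follows essentially the same route as the paper: the Haar measure case via the absorbing property of $\lambda$ for both $\boxtimes$ and $\circledast$, and the main case by showing both sides admit $(\omega_1\omega_2,b_1+b_2,\upsilon_1+\upsilon_2)$ as a $\circledast$-characteristic triplet. One small wording caution: the concluding phrase ``uniqueness of the $\circledast$-characteristic triplet'' should really be ``uniqueness of the measure admitting a given triplet'' (the paper shows $\circledast$-triplets of a measure need not be unique), but since you invoked exactly that correct direction earlier in the paragraph, the argument itself is sound.
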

\begin{proof}
Let $\mu$ and $\nu\in \ID(\U,\boxtimes)$. If $\mu$ or $\nu$ is equal to $\lambda$, we have $\mu\boxtimes \nu=\lambda$. In this case, $\Gamma(\mu)$ or $\Gamma(\nu)$ is also equal to $\lambda$ and consequently, $\lambda=\Gamma(\mu\boxtimes \nu)=\Gamma(\mu)\circledast \Gamma( \nu).$

If $\mu,\nu\in \ID(\U,\boxtimes)\cap \Ms$ with respective $\boxtimes$-characteristic triplets $(\omega_1,b_1,\upsilon_1)$ and $(\omega_2,b_2,\upsilon_2)$, the measure $\mu\boxtimes\nu$ is a {$\boxtimes$-infinitely} divisible measure with $\boxtimes$-characteristic triplet $(\omega_1\omega_2,b_1+b_2,\upsilon_1+\upsilon_2)$. By consequence, $(\omega_1\omega_2,b_1+b_2,\upsilon_1+\upsilon_2)$ is a $\circledast$-characteristic triplet of both $\Gamma(\mu\boxtimes \nu)$ and $\Gamma(\mu)\circledast \Gamma( \nu)$.
\end{proof}

We can now verify the commutativity of the diagram~\eqref{diag} in the following proposition.
\begin{proposition}\label{thtwo}
We have $\Gamma\circ \e_{\boxplus}\circ \Lambda=\e_\ast$.
\end{proposition}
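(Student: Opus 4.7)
The plan is purely bookkeeping: track a $\ast$-characteristic triplet around both sides of the diagram and observe that the images coincide, then invoke the uniqueness of the measure associated with a given $\circledast$-characteristic triplet.

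Fix $\mu \in \ID(\R,\ast)$ with $\ast$-characteristic triplet $(\eta, a, \rho)$. First I would compute $(\Gamma \circ \e_\boxplus \circ \Lambda)(\mu)$. By construction of the Bercovici--Pata bijection (recalled in Section~2.2), $\Lambda(\mu)$ is the $\boxplus$-infinitely divisible measure whose $\boxplus$-characteristic triplet is also $(\eta,a,\rho)$. Applying Definition~\ref{edef} then yields a $\boxtimes$-infinitely divisible measure $\e_\boxplus(\Lambda(\mu))$ whose $\boxtimes$-characteristic triplet is
\[
(\omega,b,\upsilon) = \Bigl(\exp\bigl(i\eta + i\!\int_\R (\sin x - 1_{[-1,1]}(x)\,x)\,\rho(\diff x)\bigr),\; a,\; \e_\ast(\rho)_{|\U\setminus\{1\}}\Bigr).
\]
Finally, Definition~\ref{gammadef} of $\Gamma$ just copies the triplet, so $(\Gamma \circ \e_\boxplus \circ \Lambda)(\mu)$ is the unique $\circledast$-infinitely divisible measure admitting $(\omega,b,\upsilon)$ as a $\circledast$-characteristic triplet.

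Next I would compare this with $\e_\ast(\mu)$. Proposition~\ref{exth} asserts precisely that the same triplet $(\omega,b,\upsilon)$ displayed above is a $\circledast$-characteristic triplet of $\e_\ast(\mu)$. Since a $\circledast$-characteristic triplet determines the $\circledast$-infinitely divisible measure uniquely (even though a measure may have several such triplets), the two measures coincide, which establishes $\Gamma \circ \e_\boxplus \circ \Lambda = \e_\ast$.

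There is really no obstacle here beyond a careful reading of the three definitions; the only mild subtlety is the asymmetry between the triplet-to-measure map (a bijection on its image) and the measure-to-triplet map (not injective on the multiplicative side). I would flag this explicitly so that the reader sees why it is legitimate to conclude equality of measures from equality of chosen triplets. All the analytic content, in particular the explicit form of the phase $\omega$ involving the correction $\int(\sin x - 1_{[-1,1]}(x)\,x)\,\rho(\diff x)$, has already been absorbed into Proposition~\ref{exth} and Definition~\ref{edef}, so no further computation is required.
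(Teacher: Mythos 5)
Your proof is correct and is essentially the paper's own argument: the paper's proof likewise observes that the triplet $\bigl(\exp\bigl(i\eta+i\int_\R(\sin x-1_{[-1,1]}(x)\,x)\,\rho(\diff x)\bigr),\,a,\,\e_\ast(\rho)_{|\U\setminus\{1\}}\bigr)$ is a $\circledast$-characteristic triplet of both $\e_\ast(\mu)$ and $\Gamma\circ\e_\boxplus\circ\Lambda(\mu)$, and concludes by uniqueness of the measure attached to a given triplet. Your explicit remark on the asymmetry (triplet determines measure, but not conversely) is a sound clarification of what the paper leaves implicit.
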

\begin{proof}
For all $\mu\in \ID(\mathbb{R},\ast)$ with $\ast$-characteristic triplet $(\eta, a, \rho)$,
\begin{equation*}
\left(\exp\left(i\eta+i\int_{\mathbb{R}}(\sin(x)-1_{[-1,1]}(x) x) \ \rho(\diff x)\right), a, \e_{\ast}(\rho)_{| \U \setminus \{1\}}\right)
\end{equation*}
is a $\circledast$-characteristic triplet of both $\e_{\ast}(\mu)$ and $\Gamma\circ \e_{\boxplus}\circ \Lambda(\mu)$.
\end{proof}
\begin{framed}
We summarize here the successive action of $\Lambda,\e_{\boxplus}, \Gamma $ and $\e_\ast$ on respectively a Dirac measure $\delta_\eta$ $(\eta\in \R)$, a Gaussian measure $\mathcal{N}_b$ $(b>0)$, and a compound Poisson distribution with rate $\lambda>0$ and jump distribution $\rho$ (Example~\ref{exac}). As expected, their images are respectively their free analogues on $\R$ (Example~\ref{exaf}), their free analogues on $\U$ (Example~\ref{exmf}), and their multiplicative analogues on $\U$ (Example~\ref{exmc}):\label{DGP}
$$\left.\begin{array}{ccccccc} & \Lambda &  & \e_\boxplus &  & \Gamma &  \\\delta_\eta & \longmapsto & \delta_\eta & \longmapsto & \delta_{e^{i\eta}} & \longmapsto & \delta_{e^{i\eta}} \\\mathcal{N}_b & \longmapsto & \mathcal{S}_b & \longmapsto & \mathcal{B}_b & \longmapsto & \e_\ast(\mathcal{N}_b) \\\Poiss^\ast_{\lambda,\rho} & \longmapsto & \Poiss^\boxplus_{\lambda,\rho} & \longmapsto & \Poiss^\boxtimes_{\lambda,\e_\ast(\rho)} & \longmapsto & \Poiss^\circledast_{\lambda,\e_\ast(\rho)}.\end{array}\right. $$
\end{framed}
\subsection{A limit theorem}The definition of $\Gamma$ is justified, if needed, by the following result of Chistyakov and Götze.
\label{limitt}

\begin{theorem}[\cite{Chistyakov2008}]For all $\mu\in \ID(\U,\boxtimes)\cap \Ms$, all natural numbers $k_1<k_2<\cdots$  and  all sequence $(\mu_n)_{n\in \N^*}$ of probability measures in $\Ms$ such that the measures $\mu_n^{\boxtimes k_n}$ converge weakly to $\mu$, the measures $\mu_n^{\circledast k_n}$ converge weakly to $\Gamma(\mu)$.
\end{theorem}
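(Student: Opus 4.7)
The plan is to prove the weak convergence $\mu_n^{\circledast k_n}\to\Gamma(\mu)$ by showing pointwise convergence of the characteristic functions on $\Z$, which is enough since $\U$ is a compact abelian group. Using~\eqref{ch}, this reduces to establishing, for every $k\in\Z$, the convergence $\widehat{\mu_n}(k)^{k_n}\to\widehat{\Gamma(\mu)}(k)$.

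First, I would lift the hypothesis to the level of $S$-transforms. By continuity of the $S$-transform on $\Ms$ under weak convergence, together with the $\boxtimes$-homomorphism property, the hypothesis $\mu_n^{\boxtimes k_n}\to\mu$ implies $S_{\mu_n}(z)^{k_n}\to S_\mu(z)=e^{v(z)}$ uniformly on some neighbourhood $D$ of $0$, with $v$ the analytic function of~\eqref{Sinf}. Since $S_\mu$ is analytic and nonvanishing on $D$ while $k_n\to\infty$, this forces $S_{\mu_n}(z)\to 1$ uniformly on a smaller neighbourhood of $0$, and consequently $k_n\log S_{\mu_n}(z)\to v(z)$ there (principal branch). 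In particular, taking $z=0$ already yields $\widehat{\mu_n}(1)^{k_n}\to e^{-v(0)}$, which a direct computation from~\eqref{Sinf} and the formula for $\widehat{\Gamma(\mu)}$ shows equals $\widehat{\Gamma(\mu)}(1)$, settling the case $k=1$.

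Second, I would translate this convergence into asymptotics for each Fourier coefficient. The $S$-transform and the moment generating series $\psi_{\mu_n}(w)=\sum_{k\geq 1}\widehat{\mu_n}(k)w^k$ are tied by the defining relation $\psi_{\mu_n}^{-1}(z)=zS_{\mu_n}(z)/(1+z)$. Plugging in the expansion $S_{\mu_n}(z)=1+k_n^{-1}v(z)+o(k_n^{-1})$, inverting, and reading off the coefficient of $w^k$ produces a limit
$$k_n\bigl(\widehat{\mu_n}(k)-1\bigr)\ \underset{n\to\infty}{\longrightarrow}\ L_k,$$
where $L_k\in\C$ is a polynomial expression in the Taylor coefficients of $v$ at $0$; the infinitesimality $\widehat{\mu_n}(k)\to 1$ for each $k$ is automatic from $S_{\mu_n}\to 1$. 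I would then identify $L_k$ with $\log\widehat{\Gamma(\mu)}(k)$: both quantities depend additively on the characteristic triplet $(\omega,b,\upsilon)$ (the former because the $S$-transform linearises $\boxtimes$, the latter because of~\eqref{ch}), so the identification reduces to a verification on the three generating families of Example~\ref{exmf}, namely Dirac masses $\delta_\omega$, free Brownian measures $\mathcal B_b$, and free compound Poisson measures $\Poiss^\boxtimes_{\lambda,\upsilon}$.

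Once this is done, the theorem follows by writing $\widehat{\mu_n}(k)^{k_n}=\exp(k_n\log\widehat{\mu_n}(k))=\exp(k_n(\widehat{\mu_n}(k)-1)+o(1))\to e^{L_k}=\widehat{\Gamma(\mu)}(k)$; the case $k<0$ is obtained by complex conjugation and $k=0$ is trivial. The main obstacle is the identification $L_k=\log\widehat{\Gamma(\mu)}(k)$: reconciling the integral representation of $v$ from~\eqref{Sinf} with the integral representation of $\log\widehat{\Gamma(\mu)}(k)$ coming from the $\circledast$-characteristic triplet requires expanding $1/(1+z(1-\zeta))$ as a geometric series on the $S$-side, expanding $\zeta^k$ as a polynomial on the Fourier side, and recognising the resulting expressions after swapping summation and integration. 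This combinatorial matching is the technical heart of the argument and is where Chistyakov and Götze's original proof concentrates the work.
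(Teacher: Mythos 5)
Your reduction to pointwise convergence of the Fourier coefficients $\widehat{\mu_n}(k)^{k_n}$ is a sound frame (the paper itself gives no proof of this statement, which it imports from Chistyakov and G\"otze), but there is a genuine gap at your first substantive step and it propagates through everything after it. From $S_{\mu_n}(z)^{k_n}\to S_\mu(z)$ with $S_\mu$ nonvanishing you conclude $S_{\mu_n}\to 1$, hence $k_n\log S_{\mu_n}\to v$, hence $\widehat{\mu_n}(k)\to 1$ and $k_n(\widehat{\mu_n}(k)-1)\to L_k$. For complex quantities this inference is false: $w_n^{k_n}\to w\neq 0$ does not force $w_n\to 1$. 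Concretely, take $\mu_n=\delta_{-1}$ and $k_n=2n$: then $\mu_n^{\boxtimes k_n}=\delta_1\to\mu=\delta_1\in\ID(\U,\boxtimes)\cap\Ms$, but $S_{\mu_n}\equiv -1$, $\widehat{\mu_n}(1)=-1\not\to 1$, and $k_n(\widehat{\mu_n}(1)-1)\to-\infty$, so your limits $L_k$ do not exist and the identity $\widehat{\mu_n}(k)^{k_n}=\exp\bigl(k_n(\widehat{\mu_n}(k)-1)+o(1)\bigr)$ collapses --- even though the conclusion of the theorem still holds here, since $\mu_n^{\circledast k_n}=\delta_1$. The missing idea is the angular recentring that the paper builds into Proposition~\ref{shift} and Corollary~\ref{colshift}: only the phase $\omega_n=m_1(\mu_n)/|m_1(\mu_n)|$ raised to the power $k_n$ converges, not $\omega_n$ itself, so every infinitesimality statement must be made for the recentred measures $\diff\mu_n^{\circ}(\zeta)=\diff\mu_n(\omega_n\zeta)$, with the phase handled separately.

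Once you recentre, the argument becomes correct and also much shorter, with no need for the identification $L_k=\log\widehat{\Gamma(\mu)}(k)$ via the three model families (which, as written, is anyway incomplete, since a general triplet is only a limit of such combinations). By Proposition~\ref{shift}, the hypothesis yields $\omega_n^{k_n}\to\omega$ and the weak convergence of $\nu_n=k_n(1-\Re(\zeta))\diff\mu_n(\omega_n\zeta)$ to $\nu$, where $(\omega,\nu)$ is the $\boxtimes$-characteristic pair of $\mu$. Since $m_1(\mu_n^{\circ})=|m_1(\mu_n)|\geq 0$, one has $\int_\U\Im(\zeta)\,\diff\mu_n^{\circ}(\zeta)=0$, hence for each $k\in\Z$,
\[
k_n\bigl(\widehat{\mu_n^{\circ}}(k)-1\bigr)=\int_\U\frac{\zeta^k-1-ik\Im(\zeta)}{1-\Re(\zeta)}\,\diff\nu_n(\zeta),
\]
whose integrand is continuous and bounded on $\U$ (with value $-k^2$ at $\zeta=1$), so it converges to the same integral against $\nu$; in particular $\widehat{\mu_n^{\circ}}(k)\to 1$ and the exp--log step is now legitimate. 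Therefore $\widehat{\mu_n}(k)^{k_n}=(\omega_n^{k_n})^k\,\widehat{\mu_n^{\circ}}(k)^{k_n}\to\omega^k\exp\Bigl(\int_\U\frac{\zeta^k-1-ik\Im(\zeta)}{1-\Re(\zeta)}\,\diff\nu(\zeta)\Bigr)=\widehat{\Gamma(\mu)}(k)$, which is exactly the L\'evy--Khintchine form defining $\Gamma(\mu)$, and pointwise convergence of Fourier coefficients gives weak convergence on the compact group $\U$. (A secondary point: your claim of uniform convergence of $S_{\mu_n^{\boxtimes k_n}}$ on a fixed disc also needs a uniform-bound argument --- moments bounded by $1$ together with $m_1(\mu_n^{\boxtimes k_n})\to m_1(\mu)\neq 0$ --- but that is minor compared with the recentring issue.)
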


The rest of this section is devoted to proving an analogous theorem for $\e_{\boxplus}$. This goal is achieved in Theorem~\ref{thone}.
Let us start by a key result, interesting in its own, about the convergence towards a \mbox{$\boxtimes$-infinitely} divisible measure. The following proposition is the analogue of Theorem~\ref{critc} for the convolution~$\boxtimes$. We refer the reader to Theorem 4.3 of \cite{Bercovici2008} and  Theorem 2.3 of \cite{Chistyakov2008} for other similar criterions. The major difference between these results and ours is the shift of $\mu_n$ considered: in Proposition~\ref{shift}, we consider the angular part $\omega_n=m_1(\mu_n)/|m_1(\mu_n)|$ of the mean of $\mu_n$.

For all measure $\mu_n$ on $\U$, all $\omega_n\in \U$ and all $k_n\in \N$, we denote by $k_n (1-\Re( \zeta))\diff \mu_n(\omega_n \zeta)$ the measure such that, for all bounded Borel function $f$ on $\U$,
$$\int_\U f(\zeta) k_n (1-\Re( \zeta))\diff \mu_n(\omega_n \zeta)=k_n \int_\U f(\omega_n^{-1}\zeta)  (1-\Re(\omega_n^{-1} \zeta))\diff \mu_n( \zeta).$$

\begin{proposition}\label{shift}
Let $\mu\in \ID(\U,\boxtimes)$ with $\boxtimes$-characteristic pair $(\omega, \nu)$. Let $k_1<k_2<\cdots$ be a sequence of natural numbers. Let $(\mu_n)_{n\in \N}$ be a sequence of measures in $  \Ms$ and $(\omega_n)_{n\in \N}$ a sequence of elements of $\U$ such that, for all $n\in \N$,  $\omega_n=m_1(\mu_n)/|m_1(\mu_n)|$. The following assertions are equivalent:
\begin{enumerate}
\item the measures $\underbrace{\mu_n\boxtimes\cdots \boxtimes \mu_n}_{k_n \text{ times}}$ converge weakly to $\mu$;
\item the measures  $$\diff\nu_n(x)=k_n (1-\Re( \zeta))\diff \mu_n(\omega_n \zeta)$$
converge weakly to $\nu$ and
$$\lim_{n\to \infty}\omega_n^{k_n}=\omega.$$
\end{enumerate}
\end{proposition}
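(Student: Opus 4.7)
The plan is to peel off the angular part of the mean by a Dirac shift, translate weak convergence into uniform convergence of $\Sigma$-transforms on a disk around~$0$, and then identify $k_n\log\Sigma_{\mu_n}(z)$ asymptotically with the Cauchy-type integral against~$\nu_n$. Setting $\tilde\mu_n:=\delta_{\omega_n^{-1}}\boxtimes\mu_n$, the measure $\tilde\mu_n$ has real positive first moment $r_n:=|m_1(\mu_n)|$, and $\mu_n^{\boxtimes k_n}=\delta_{\omega_n^{k_n}}\boxtimes\tilde\mu_n^{\boxtimes k_n}$. Since $m_1$ is multiplicative under $\boxtimes$ and $|m_1(\mu)|=e^{-\nu(\U)}\neq 0$, examining first moments shows that (1) forces $\omega_n^{k_n}\to\omega$; conversely, $\omega_n^{k_n}\to\omega$ together with $\tilde\mu_n^{\boxtimes k_n}\to\tilde\mu:=\delta_{\omega^{-1}}\boxtimes\mu$ gives back (1). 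Because $\tilde\mu$ has $\boxtimes$-characteristic pair $(1,\nu)$ and $\nu_n$ is defined precisely via the same shift, matters reduce to the case $\omega_n=\omega=1$, for which $\tilde\mu_n=\mu_n$ and $\diff\nu_n(\zeta)=k_n(1-\Re\zeta)\diff\mu_n(\zeta)$.

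\textbf{Passing to the $\Sigma$-transform.} For measures in $\Ms$, weak convergence is equivalent to uniform convergence of $\Sigma$-transforms on a small disk around $0$, by stability of the compositional inversion defining $S_\mu$ at a point where the derivative does not vanish. Using $\Sigma_{\mu_n^{\boxtimes k_n}}=\Sigma_{\mu_n}^{k_n}$ together with formula~\eqref{Ssup}, assertion~(1) reads
\[
k_n\log\Sigma_{\mu_n}(z)\;\longrightarrow\;\int_\U\frac{1+\zeta z}{1-\zeta z}\,\diff\nu(\zeta)
\]
uniformly on a disk around~$0$, with the principal branch of the logarithm, which is well-defined for $n$ large since $\Sigma_{\mu_n}(0)=1/r_n\to1$.

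\textbf{Key identity.} The heart of the proof is the asymptotic relation
\[
k_n\log\Sigma_{\mu_n}(z)\;=\;\int_\U\frac{1+\zeta z}{1-\zeta z}\,\diff\nu_n(\zeta)\;+\;\varepsilon_n(z),\qquad \sup_{|z|\le\rho}|\varepsilon_n(z)|\xrightarrow[n\to\infty]{}0,
\]
valid whenever $\nu_n(\U)=k_n(1-r_n)$ stays bounded in $n$, which is automatic in either direction of the equivalence. Granting this, $k_n\log\Sigma_{\mu_n}\to\int\frac{1+\zeta z}{1-\zeta z}\diff\nu$ uniformly near~$0$ is equivalent to $\int\frac{1+\zeta z}{1-\zeta z}\diff\nu_n\to\int\frac{1+\zeta z}{1-\zeta z}\diff\nu$ uniformly near~$0$, which is itself equivalent to $\nu_n\wn\nu$: the Taylor coefficients in $z$ of this integral generate all moments $\int\zeta^k\diff\nu_n$ for $k\ge1$, the value at $z=0$ captures the total mass, and together these determine weak convergence on the compact set~$\U$.

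\textbf{Main obstacle.} The delicate step is the estimate $\varepsilon_n\to0$. The constant term in $z$ reduces to comparing $-k_n\log r_n$ with $\nu_n(\U)=k_n(1-r_n)$, which agree up to $k_n O((1-r_n)^2)\to 0$ since $k_n(1-r_n)$ is bounded while $1-r_n\to 0$. For the higher-order Taylor coefficients in $z$, one unfolds the compositional inversion $S_{\mu_n}(z)=\tfrac{1+z}{z}\psi_{\mu_n}^{-1}(z)$ and checks that, for each $j\ge 1$, the $z^j$-coefficient of $\log\Sigma_{\mu_n}(z)-\int\frac{1+\zeta z}{1-\zeta z}(1-\Re\zeta)\diff\mu_n(\zeta)$ is a polynomial in the infinitesimal quantities $1-r_n,1-m_2(\mu_n),\ldots$ whose lowest-degree contributions have total order at least two, so that after multiplication by~$k_n$ it still tends to zero uniformly on a fixed disk. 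This is the step that parallels the analytic arguments of~\cite{Bercovici2008} and~\cite{Chistyakov2008}, with the specific normalization of the Dirac shift chosen so that $\omega_n$ coincides with the angular part of~$m_1(\mu_n)$.
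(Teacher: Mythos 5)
Your overall strategy is in fact the same as the paper's (rotate by the angular part of the mean, then compare $k_n\log$ of the normalized $S$-transform with the Poisson-type integral against $\nu_n$), but the decisive step of your argument — the estimate $\sup_{|z|\le\rho}|\varepsilon_n(z)|\to 0$ in your ``key identity'' — is asserted rather than proved, and the justification you offer for it is not valid as stated. You argue that each $z^j$-coefficient of the error is of total order at least two in the deviations $\delta_j:=1-m_j(\tilde\mu_n)$, hence tends to zero after multiplication by $k_n$. However, boundedness of $\nu_n(\U)=k_n(1-r_n)$ only yields a priori $|\delta_j|\le j\sqrt{2(1-r_n)}=O(k_n^{-1/2})$, so a product of two deviations multiplied by $k_n$ is merely $O(1)$: ``total order at least two'' does not by itself give $o(1/k_n)$. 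What rescues the argument is exactly the normalization you mention only in passing: after the rotation one has $\Im m_1(\tilde\mu_n)=0$, and then the inequalities $1-\cos(j\theta)\le j^2(1-\cos\theta)$ and $|\sin(j\theta)-j\sin\theta|\le C_j(1-\cos\theta)$ give $|\delta_j|\le C'_j(1-r_n)=O(1/k_n)$ for each fixed $j$; only with this bound (plus uniform geometric control of the coefficients to get uniformity on a fixed disk) does $k_n$ times the second-order remainder vanish. This estimate is the analytic heart of the proof and is missing from your write-up, so as it stands there is a genuine gap.

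For comparison, the paper's proof sidesteps this issue: it works coefficientwise with formal power series (weak convergence on $\U$ being equivalent to moment convergence), uses the exact algebraic identity~\eqref{mmu} relating $M_{\nu_n}$ to $M_{\mu_n^\circ}$, and carries out the compositional inversion of $zS_{\mu_n^\circ}(z)/(1+z)$ in the quotient algebra $\ell^\infty/\mathcal{I}$ with $\mathcal{I}=\{$sequences $o(1/k_n)\}$, so that no separate remainder estimate of your type is needed; nor does it need your preliminary equivalence between weak convergence and uniform convergence of $\Sigma$-transforms on a disk (which itself requires the remark that the first moments stay bounded away from $0$ and a quantitative inversion argument). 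Your plan can be completed, but you must actually prove the bound $1-m_j(\tilde\mu_n)=O(1/k_n)$ (or otherwise establish $\varepsilon_n\to0$), and justify the analytic dictionary between weak convergence and uniform convergence of $\Sigma$-transforms near $0$.
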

In concrete cases, the second item is often easier to verify. For example, it allows us to infer that, for any constant $\lambda>0 $ and any probability measure $\upsilon$ on $\U$, the measure $\Poiss^\boxtimes_{\lambda,\upsilon}$ is the weak limit of $\left((1-\lambda/n)\delta_1+(\lambda/n)\upsilon\right)^{\boxtimes n} $ as $n$ tends to $\infty$.

We would point out the recent work~\cite{Anshelevich2013} which proves that the convergence of Theorem~\ref{shift} implies local convergences of the probability densities.

\begin{proof}The weak convergence of finite measures on $\U$ is equivalent to the convergence of the moments, or equivalently, for measures in $\Ms$, to the convergence of the $S$-transform. Thus, it suffices to prove that the following assertions are equivalent:
\begin{enumerate}
\item $\lim_{n\to \infty}S_{\mu_n^{\boxtimes k_n}}=S_\mu$;
\item $\lim_{n\to \infty}M_{\nu_n}=M_\nu$ and $\lim_{n\to \infty}\omega_n^{k_n}=\omega$
\end{enumerate}
where the convergence of formal series is the convergence of each coefficient. Let us recall the useful information about the $S$-transform: it is a $\boxtimes$-homomorphism and $z S(z)/(1+z)$ is the inverse under composition of $M(z)-1$ (see Section~\ref{SSS}).

Let $n\in \N$. We set $r_n=|m_1(\mu_n)|$, so that $m_1(\mu_n)=r_n\omega_n$. We define also $\mu_n^{\circ}\in \Ms$ such that $\diff\mu_n^{\circ}(\zeta)=\diff \mu_n(\omega_n \zeta)$. The measure $\mu_n^{\circ}$ will be the link between $\mu_n$ and $\nu_n$. Observe that $M_{\mu_n^{\circ}}(z)=M_{\mu_n}(\omega_n^{-1} z)$, which implies that $S_{\mu_n^{\circ}}(z)=\omega_n S_{\mu_n}(z)$.
The first step of the proof is to write $M_{\nu_n}$ with the help of $M_{\mu_n^{\circ}}$. For all $\zeta\in \U$ and $z\in \mathbb{C}$ sufficiently small, we have
$$2\frac{1-\Re\zeta}{1-\zeta z}= (z-1)\left[(1-z)\frac{\zeta}{1-\zeta z}-1\right]+1-\bar{\zeta}.$$
%
Integrating with respect to $\mu_n^{\circ}$, and remarking that $\int_\U \bar{\zeta}\diff\mu_n^{\circ}(\zeta)=\overline{\int_\U \zeta\diff\mu_n(\zeta)/\omega_n} =\bar{r}_n=r_n$, we deduce that \begin{equation}
\frac{2}{k_n}M_{\nu_n}= (z-1)\left[\frac{1-z}{z}(M_{\mu_n^\circ}-1)-1\right]+(1-r_n).\label{mmu}
\end{equation}

Let us suppose that $\lim_{n\to \infty}S_{\mu_n^{\boxtimes k_n}}=S_\mu$. For all $n\in \N$, $S_{\mu_n^{\boxtimes k_n}}=(S_{\mu_n})^{k_n}$. Therefore, we have $\lim_{n\to \infty}m_1(\mu_n)^{-k_n}=\lim_{n\to \infty}S_{\mu_n}^{k_n}(0)=S_{\mu}(0)$. Thanks to~\eqref{Ssup}, we know that $S_{\mu}(0)=\omega^{-1} e^{\nu(\U)}$, which implies that $ \lim_{n\to \infty} \omega_n^{k_n}=\omega$ and $\lim_{n\to \infty}r_n^{k_n}= e^{-\nu(\U)}$. It remains now to prove $\lim_{n\to \infty}M_{\nu_n}=M_\nu$.
At this stage of the proof, we need to inverse formal series, at least asymptotically, and instead of doing it term by term, we prefer to work in a quotient algebra where the negligible terms will be forgotten.

More precisely, let $\ell^\infty$ be the algebra of bounded complex sequences. We consider the ideal $\mathcal{I} \subset \ell^\infty$ composed of sequences $x_n$ such that $\lim_{n\to \infty}k_n  x_n = 0$; in other words, sequences which are $o(1/k_n)$. We find it convenient to work in the quotient algebra $\mathcal{B}=\ell^\infty/\mathcal{I}$. For example, $\lim_{n\to \infty}k_n \log(r_n)=- \nu(\U)$ can be rewritten $\log(r_n)\cong -\frac{1}{k_n}\nu(\U)$ in $\mathcal{B}$, which implies that $r_n\cong e^{-\frac{1}{k_n}\nu(\U)}\cong 1-\frac{1}{k_n}\nu(\U)$. We will view sequences of formal series as elements of $\ell^\infty[[z]]$, and we will naturally identify $\ell^\infty[[z]]/\mathcal{I}[[z]]$ with $\mathcal{B}[[z]]$. For simplicity, equality in $\mathcal{B}$ or $\mathcal{B}[[z]]$ will be denoted by the symbol $\cong$.

Let us denote by $u(z)$ the series
$$u(z)=\int_{\U}\frac{1+z+\zeta z}{1+z-\zeta z}\diff \nu (\zeta)-\nu(\U).$$
Thanks to~\eqref{Ssup}, we have $m_1(\mu) S_\mu(z)=S_\mu(z)/S_\mu(0)=\exp(u(z))$, from which we deduce that $u(z)$ is equal to $\log (m_1(\mu) S_\mu)$, that is to say the series given by 
$ -\sum_{k=1}^{\infty}\frac{1}{k}\left(1-m_1(\mu) S_\mu(z)\right)^k.$
The formal series $k_n\log (m_1(\mu_n)S_{\mu_n})=\log ((m_1(\mu_n)S_{\mu_n})^{k_n})$ tends to $ \log (m_1(\mu) S_\mu)=u(z)$ as $n$ tends to infinity. Consequently, $\log (m_1(\mu_n)S_{\mu_n})\cong \frac{1}{k_n} u(z)$. Thus, we have

 \begin{align*}
S_{\mu_n^{\circ}}(z)={r_n^{-1} \cdot  m_1(\mu_n) S_{\mu_n}(z)}&\cong e^{\frac{1}{k_n}\nu(\U)} \cdot 
\exp\left(\frac{1}{k_n} u(z)\right)\\ 
&\cong \left(1+\frac{1}{k_n}\nu(\U)\right)\left(1+\frac{1}{k_n} u(z)\right)\\ 
&\cong 1+\frac{1}{k_n}\nu(\U)+ \frac{1}{k_n} u(z).\end{align*}
With this new expression of $S_{\mu_n^{\circ}}$, it is easy to check that the inverse under composition in $\mathcal{B}[[z]]$  of $zS_{\mu_n^{\circ}}/(z+1)\cong z(1+\frac{1}{k_n}\nu(\U)+\frac{1}{k_n}u(z))/(z+1) $ is exactly 
 $$\frac{z}{(1-z)}\left[1-\frac{1}{k_n}\frac{1}{1-z}\left(\nu(\U)+u\left(\frac{z}{1-z}\right)\right)\right],$$
 which is then the expression of $M_{\mu_n^{\circ}}(z)-1$ in $\mathcal{B}[[z]]$.
Replacing $M_{\mu_n^{\circ}}(z)-1$ by this expression in~\eqref{mmu} yields
$$ \frac{2}{k_n} M_{\nu_n}\cong \frac{1}{k_n}\left(u\left(\frac{z}{1-z}\right)+2\nu(\U)\right).$$
But we have $u\left(z/(1-z)\right)+2\nu(\U)=\int_{\U}\frac{2}{1-\zeta z}\diff \nu (\zeta)=2 M_{\nu}$,  and finally, $ \frac{1}{k_n} M_{\nu_n}\cong \frac{1}{k_n} M_{\nu}$, or equivalently, $\lim_{n\to \infty}M_{\nu_n}=M_\nu$.

Conversely, if we suppose that $\lim_{n\to \infty}M_{\nu_n}=M_\nu$ and $\lim_{n\to \infty}\omega_n^{k_n}=\omega$,  we can basically retrace our steps in order to arrive at $\lim_{n\to \infty}S_{\mu_n^{\boxtimes k_n}}=S_\mu$.
\end{proof}

\begin{corollary}Let $\mu\in \ID(\U,\boxtimes)$ with $\boxtimes$-characteristic triplet $(\omega,b, \upsilon)$. Let $k_1<k_2<\cdots$ be a sequence of natural numbers. Let $(\mu_n)_{n\in \N}$ be a sequence of measures in $  \Ms$ and $(\omega_n)_{n\in \N}$ be such that, for all $n\in \N$, $\omega_n=m_1(\mu_n)/|m_1(\mu_n)|$. The following assertions are equivalent:\label{colshift}
\begin{enumerate}
\item the measures $\underbrace{\mu_n\boxtimes\cdots \boxtimes \mu_n}_{k_n \text{ times}}$ converge weakly to $\mu$;
\item $\lim_{n\to \infty} \omega_n^{k_n}=\omega$ and the measures  $\diff\nu_n(x)=k_n (1-\Re( \zeta))\diff \mu_n(\omega_n \zeta)$
converge weakly to \mbox{$(1-\Re( \zeta))\diff \upsilon(\zeta)+\frac{b}{2}\delta_1$.}

\end{enumerate}

\end{corollary}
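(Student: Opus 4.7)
The plan is to obtain this corollary as a direct translation of Proposition~\ref{shift} in terms of the $\boxtimes$-characteristic triplet rather than the $\boxtimes$-characteristic pair of $\mu$. Recall from Section~\ref{SSS} that the pair $(\omega,\nu)$ and the triplet $(\omega,b,\upsilon)$ are linked by $b=2\nu(\{1\})$ and $\upsilon(\diff\zeta)=\frac{1}{1-\Re\zeta}\cdot 1_{\U\setminus\{1\}}(\zeta)\,\nu(\diff\zeta)$, with $\upsilon(\{1\})=0$. Since $1-\Re\zeta$ vanishes at $\zeta=1$, these relations can be inverted into the single identity
\[
\nu(\diff\zeta)=(1-\Re\zeta)\,\upsilon(\diff\zeta)+\tfrac{b}{2}\delta_1(\diff\zeta)
\]
as finite measures on $\U$; finiteness is ensured by the finiteness of $\nu$ in the definition of the characteristic pair.

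With this identity in hand, I would apply Proposition~\ref{shift} directly to the sequences $(\mu_n)$ and $(\omega_n)$. Its assertion~(2) reads: $\omega_n^{k_n}\to\omega$ and the measures $\nu_n(\diff\zeta)=k_n(1-\Re\zeta)\diff\mu_n(\omega_n\zeta)$ converge weakly to $\nu$. Substituting the displayed formula for $\nu$ into the limiting measure turns this statement into assertion~(2) of the present corollary. Assertion~(1) is literally unchanged from Proposition~\ref{shift}, so the equivalence $(1)\Leftrightarrow(2)$ transfers verbatim.

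There is no real obstacle here: the mathematical content is entirely contained in Proposition~\ref{shift}, and the corollary simply repackages it in a form more convenient for applications, since it separates the diffusion component $b$ from the jump component $\upsilon$ of the limit law. This triplet-level formulation is what will actually be used in the sequel, for instance to identify weak limits of matricial models (in particular in Theorem~\ref{trois}), where one typically verifies separately the convergence of the angular mean, of the Gaussian mass at $1$, and of the jump part away from $1$.
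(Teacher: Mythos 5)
Your proposal is correct and matches the paper's (implicit) argument: the corollary is stated there without proof, precisely because it is the immediate rewriting of Proposition~\ref{shift} using the identity $\nu(\diff\zeta)=(1-\Re\zeta)\,\upsilon(\diff\zeta)+\tfrac{b}{2}\delta_1(\diff\zeta)$ relating the $\boxtimes$-characteristic pair and triplet. Nothing is missing.
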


We are now ready to prove the first main theorem of this paper.

\begin{theorem}\label{thone}
For all $\mu\in \ID(\R,\boxplus)$, all natural numbers $k_1<k_2<\cdots$  and all sequence $(\mu_n)_{n\in \N^*}$ of probability measures on $\R$ such that the measures $\mu_n^{\boxplus k_n}$ converge weakly to $\mu$, the measures $\e_{\ast}(\mu_n)^{\boxtimes k_n}$ converge weakly to $\e_{\boxplus}(\mu)$.

\end{theorem}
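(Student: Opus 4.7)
\emph{Proof plan.} The plan is to apply the criterion of Corollary~\ref{colshift} to the sequence $(\e_\ast(\mu_n))$. Denote by $(\eta,a,\rho)$ the $\boxplus$-characteristic triplet of $\mu$ and by $(\omega,b,\upsilon)$ the $\boxtimes$-characteristic triplet of $\e_\boxplus(\mu)$ given by Definition~\ref{edef}, and set $\omega_n = m_1(\e_\ast(\mu_n))/|m_1(\e_\ast(\mu_n))| = e^{i\alpha_n}$, which will be well defined with $\alpha_n \to 0$ for large $n$ once $m_1(\e_\ast(\mu_n))\to 1$ is established. Two assertions must be checked: (i) $\omega_n^{k_n} \to \omega$, and (ii) the measures $k_n(1-\Re\zeta)\,d\e_\ast(\mu_n)(\omega_n\zeta)$ converge weakly on $\U$ to $(1-\Re\zeta)\,d\upsilon(\zeta) + (b/2)\delta_1$. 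Throughout, the Bercovici--Pata bijection (Theorem~\ref{bercopata}) lets one pass freely between the hypothesis $\mu_n^{\boxplus k_n} \to \mu$ and the additive convergence $\mu_n^{\ast k_n} \to \Lambda^{-1}(\mu)$, so that Theorem~\ref{critc} and Lemma~\ref{lemconv} apply with the same triplet $(\eta,a,\rho)$.

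For (i), the strategy is to analyze $\alpha_n$ through the two moments of $\e_\ast(\mu_n)$. Lemma~\ref{lemconv} applied to $1-\cos x \sim x^2/2$ yields $k_n\int(1-\cos x)\,d\mu_n \to \int(1-\cos x)\,d\rho + a/2$, which is finite; this forces $1 - \Re m_1(\e_\ast(\mu_n)) = O(1/k_n)$, and a short expansion of $\arctan$ shows $k_n\alpha_n = k_n\int\sin x\,d\mu_n + o(1)$. To identify the limit of the right-hand side, decompose $\sin x = x/(1+x^2) + h(x)$ with $h$ continuous, bounded, and $h(x) = O(x^3)$ near $0$. Theorem~\ref{critc} gives $k_n\int x/(1+x^2)\,d\mu_n \to \gamma$ and Lemma~\ref{lemconv} (with $f_0 = 0$) gives $k_n\int h\,d\mu_n \to \int h\,d\rho$; using the relation~\eqref{rel} between $(\gamma,\sigma)$ and $(\eta,a,\rho)$, the sum collapses to $\eta + \int(\sin x - x\mathbf{1}_{[-1,1]}(x))\,d\rho = \arg\omega$. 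Hence $k_n\alpha_n \to \arg\omega$ and $\omega_n^{k_n} \to \omega$.

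For (ii), fix $f\in C(\U)$ and set $G(x) = f(e^{ix})(1-\cos x)$ and $G_n(x) = f(e^{i(x-\alpha_n)})(1-\cos(x-\alpha_n))$, so that the integral of $f$ against $k_n(1-\Re\zeta)\,d\e_\ast(\mu_n)(\omega_n\zeta)$ equals $k_n\int_\R G_n\,d\mu_n$. Since $G$ is continuous, bounded and $G(x) \sim (f(1)/2)x^2$ at $0$, Lemma~\ref{lemconv} gives
$$k_n\int_\R G\,d\mu_n \to \int_\R G\,d\rho + (a/2)f(1) = \int_\U f(\zeta)(1-\Re\zeta)\,d\upsilon(\zeta) + (b/2)f(1),$$
where the atom of $\e_\ast(\rho)$ at $1$ is absorbed by $(1-\Re\zeta)$; this is the desired limit. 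The remainder $k_n\int(G_n - G)\,d\mu_n$ will be shown to vanish via the pointwise bound $|G_n - G| \leq \|f\|_\infty(\alpha_n^2/2 + |\alpha_n||\sin x|) + \omega_f(|\alpha_n|)(1-\cos x)$, obtained from the cosine addition formula and the modulus of continuity $\omega_f$ of $f$. Then $k_n\alpha_n^2 = O(\alpha_n)\to 0$, $k_n\int(1-\cos x)\,d\mu_n = O(1)$ handles the third term, while a Cauchy--Schwarz estimate $k_n\int|\sin x|\,d\mu_n \leq 2k_n^{1/2}(k_n\int\sin^2(x/2)\,d\mu_n)^{1/2} = O(k_n^{1/2})$ combined with $\alpha_n = O(1/k_n)$ kills the middle term.

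The main obstacle is the interplay between $\alpha_n = O(1/k_n)$ and the factor $k_n$ in front of $\int G_n\,d\mu_n$: a naive estimate $|G_n - G| \leq \omega_f(|\alpha_n|)\|1-\cos\|_\infty$ is too weak after multiplication by $k_n$, since $k_n\omega_f(|\alpha_n|)$ need not be small. The quadratic vanishing of $1-\cos$ near $0$ together with the Cauchy--Schwarz control on $\int|\sin x|\,d\mu_n$ must both be extracted in order to close the estimate and identify the shift.
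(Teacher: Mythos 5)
Your proposal is correct and follows essentially the same route as the paper's proof: reduce to the criterion of Corollary~\ref{colshift}, then verify $\lim_n\omega_n^{k_n}=\omega$ and the weak convergence of the shifted measures $k_n(1-\Re\zeta)\diff(\e_\ast(\mu_n))(\omega_n\zeta)$ by means of Lemma~\ref{lemconv} (equivalently Theorem~\ref{critc}). The differences are only in execution — you identify $\arg\omega$ through $\arctan$, the decomposition $\sin x=x/(1+x^2)+h(x)$ and the relation~\eqref{rel}, and you treat the $\alpha_n$-shift by testing against general $f\in C(\U)$ with a modulus-of-continuity plus Cauchy--Schwarz estimate, whereas the paper splits $\lim_n m_1(\e_\ast(\mu_n))^{k_n}$ into modulus and argument and proves the convergence by the method of moments using the expansion $\omega_n=1+O(1/k_n)$ — but these computations are equivalent in substance.
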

Let us derive right now some consequences of this theorem. It allows us to transfer limit theorems about $\boxplus$ into limit theorem about $\boxtimes$. For example, for all $b>0$, the semi-circular measure is such that $\mathcal{S}_{b/n}^{\boxplus n}=\mathcal{S}_{b}$. We deduce that $\mathcal{B}_b=\e_{\boxplus}(\mathcal{S}_{b})$, which is the law of a free unitary Brownian motion at time $b$, is the weak limit of the measures $\e_{\ast}(\mathcal{S}_{b/n})^{\boxtimes n}$. Using Theorem~\ref{bercopata}, we know also that the measures $\mathcal{N}_{b/n}^{\boxtimes n}$ converge weakly to $\mathcal{S}_{b}$. By consequence, $\mathcal{B}_b$  is also the weak limit of $\e_{\ast}(\mathcal{N}_{b/n})^{\boxtimes n}$ as $n$ tends to $\infty$.

\begin{proof}Let $(\eta, a, \rho)$ be the $\boxplus$-characteristic triplet of $\mu$, and $(\omega,b, \upsilon)$ be the $\boxtimes$-characteristic triplet of $\e_{\boxplus}(\mu)$ given by~\eqref{fne}. In order to use Corollary~\ref{colshift}, we first prove that $\e_{\ast}(\mu_n)\in  \Ms$ for $n$ sufficiently large.

Because $e^{ix}-1=ix1_{[-1,1]}(x)+(e^{ix}-1-ix1_{[-1,1]}(x))$, we have
$$
\left(\int_\R e^{ix}\diff \mu_n(x)-1\right)=i\int_\R x1_{[-1,1]}(x)\diff \mu_n(x)+\int_\R(e^{ix}-1-ix1_{[-1,1]}(x))\diff \mu_n(x).
$$
We use Lemma~\ref{lemconv}, and the fact that $e^{ix}-1-ix1_{[-1,1]}(x)\sim_{x\to 0}-\frac{1}{2}x^2,$ to deduce that
\begin{equation}
\lim_{n\to \infty}k_n \left(\int_\R e^{ix}\diff \mu_n(x)-1\right)= i\eta-\frac{a}{2}+\int (e^{ix}-1-1_{[-1,1]}(x) i x)\rho(\diff x).\label{auxk}
\end{equation}
Consequently, $m_1(\e_{\ast}(\mu_n))=\int_\R e^{ix}\diff \mu_n(x)$ tends to $1$ as $n$ tends to $\infty$, and $\e_{\ast}(\mu_n)\in   \Ms$ for $n$ sufficiently large. Without loss of generality, we assume that $\e_{\ast}(\mu_n)$ is in $  \Ms$ for all $n\in \N$. We set $(r_n,\omega_n)_{n\in \N}$ the sequence of $\R_+\times \U$ such that, for all $n\in \N$, we have $m_1(\mu_n)=r_n \omega_n$. Thanks to Corollary~\ref{colshift}, it suffices to prove that  $\lim_{n\to \infty}\omega_n^{k_n}=\omega$ and to prove that the measure $k_n (1-\Re( \zeta))\diff (\e_{\ast}(\mu_n))(\omega_n \zeta) $ converge weakly to $ (1-\Re( \zeta))\diff \upsilon+\frac{b}{2}\delta_1$ to conclude.

From~\eqref{auxk}, we deduce that
$$\lim_{n\to \infty}r_n^{k_n}\omega_n^{k_n}=\lim_{n\to \infty}\left(\int_\R e^{ix}\diff \mu_n(x)\right)^{k_n}=\exp\left( i\eta-\frac{a}{2}+\int (e^{ix}-1-1_{[-1,1]}(x) i x)\rho(\diff x)\right),$$
and this result can be split into
$$\lim_{n\to \infty}r_n^{k_n}=\exp\left( -\frac{a}{2}+\int (\cos(x)-1)\rho(\diff x)\right)$$and
$$\lim_{n\to \infty}\omega_n^{k_n}=\exp\left( i\eta+i \int (\sin(x)-1_{[-1,1]}(x) x)\rho(\diff x)\right)=\omega.$$
Using the real logarithm, we deduce that,  as $n$ tends to $\infty$,
\begin{equation}r_n^{-1}=1+\frac{1}{k_n}\left( \frac{a}{2}-\int (\cos(x)-1)\rho(\diff x)\right)+o\left(\frac{1}{k_n}\right).\label{auxr}\end{equation}
Using $\omega_n=r_n^{-1}\int_\R e^{ix}\diff \mu_n(x)$, \eqref{auxk} and~\eqref{auxr}, it follows that, as $n$ tends to $\infty$,
\begin{equation}\omega_n=1+\frac{i}{k_n}\left(  \eta+ \int (\sin(x)-1_{[-1,1]}(x) x)\rho(\diff x)\right)+o\left(\frac{1}{k_n}\right)\text{ . }\label{auxo}\end{equation}
In order to prove that the measures $k_n (1-\Re( \zeta))\diff \e_{\ast}\mu_n(\omega_n \zeta)$ converge weakly to $  (1-\Re( \zeta))\diff \upsilon+\frac{b}{2}\delta_1$, we shall use the method of moments and prove that, for all $m\in \N$,
$$\lim_{n\to \infty}k_n \int_\U\zeta^m(1-\Re( \zeta))\diff (\e_{\ast}(\mu_n))(\omega_n \zeta) = \int_\U\zeta^m(1-\Re( \zeta))\diff \upsilon( \zeta)+\frac{b}{2}.$$
Let $n\in \N$. We have
\begin{align*}
k_n \int_\U\zeta^m(1-\Re( \zeta))\diff (\e_{\ast}(\mu_n))(\omega_n \zeta) =&k_n \int_\U\omega_n^{-m}\zeta^m(1-\Re(\omega_n^{-1} \zeta))\diff (\e_{\ast}(\mu_n))(\zeta) \\
=&k_n \omega_n^{-m}\int_\R e^{imx}(1-\Re(\omega_n^{-1}e^{ix}))\diff \mu_n(x)\\
=&k_n \omega_n^{-m}\int_\R e^{imx}(1-\Re(\omega_n)\cos(x)-\Im(\omega_n)\sin(x))\diff \mu_n(x).
\end{align*}
Let us decompose the integral under study into four terms:\begin{align*}
k_n \int_\U\zeta^m(1-\Re( \zeta))\diff (\e_{\ast}(\mu_n))(\omega_n \zeta) =&k_n \omega_n^{-m}\Re(\omega_n)\int_\R e^{inx}(1-\cos(x))\diff \mu_n(x)\\
&+k_n (1-\Re(\omega_n))\omega_n^{-m}\int_\R e^{imx}\diff \mu_n(x)\\
&+k_n \Im(\omega_n)\omega_n^{-m}\int_\R(x1_{[-1,1]}(x)- e^{imx}\sin(x))\diff \mu_n(x)\\
&-k_n \Im(\omega_n)\omega_n^{-m}\int_\R x1_{[-1,1]}(x)\diff \mu_n(x).
\end{align*}
Thanks to Lemma~\ref{lemconv}, and because $\lim_{n\to \infty}\omega_n=1$, we know the limit of the first term:
\begin{align*}\lim_{n\to \infty}k_n \omega_n^{-m}\Re(\omega_n)\int_\R e^{imx}(1-\cos(x))\diff \mu_n(x) &=\int_\R e^{imx}(1-\cos(x))\diff \rho( x)+\frac{b}{2}\\&=\int_\U\zeta^n(1-\Re( \zeta))\diff \upsilon( \zeta)+\frac{b}{2}.\end{align*}
The three others terms tend to $0$. Indeed, \eqref{auxo} implies that $k_n (1-\Re(\omega_n))=o\left(1/k_n\right)$ and $ \Im(\omega_n)=O(1/k_n)$ when $n$ tends to $\infty$. We know that $\omega_n^{-m}=O(1)$ and $\int_\R e^{imx}\diff \mu_n(x)=O(1)$ when $n$ tends to $\infty$. Finally, Lemma~\ref{lemconv} tells us that $\int_\R(x1_{[-1,1]}(x)- e^{imx}\sin(x))\diff \mu_n(x)=O(1/k_n)$ and $\int_\R x1_{[-1,1]}(x)\diff \mu_n(x)=O(1/k_n)$ as $n$ tends to $\infty$. Thus,
$$k_n (1-\Re(\omega_n))\omega_n^{-m}\int_\R e^{imx}\diff \mu_n(x),$$
$$k_n \Im(\omega_n)\omega_n^{-m}\int_\R(x1_{[-1,1]}(x)- e^{imx}\sin(x))\diff \mu_n(x)$$ and $$
-k_n \Im(\omega_n)\omega_n^{-m}\int_\R x1_{[-1,1]}(x)\diff \mu_n(x)$$ are $o(1)$ as $n$ tends to $\infty$, and the result follows.
\end{proof}

\section{Free log-cumulants}\label{flc}
We are at the beginning of the second part of the paper, the aim of which is to prove Theorem~\ref{convwe} and Theorem~\ref{whole}. This goal is achieved in Section~\ref{secsix}. While Section~\ref{secfour} and Section~\ref{secfive} investigates the distributions of certain classes of random matrices, the current section is devoted to establish Proposition~\ref{taulogcumdeux} which is the result of free probability needed for the asymptotic  theorems proved in the last section of the paper. As a consequence, Section~\ref{flc} can be read independently of Section~\ref{secfour} and Section~\ref{secfive}.

Mastnak and Nica explain in~\cite{Mastnak2010} that, in order to treat the multidimensional free multiplicative convolution, it is preferable to work with a logarithmic version of the $S$-transform. This leads to a sequence of coefficients which in~\cite{Cebron2013} are called  the free log-cumulants. 
In this section, we use the theory of free log-cumulants to establish Proposition~\ref{taulogcumdeux} which links in an explicit formula the moments of a {$\boxtimes$-infinitely} divisible measure to its  {$\boxtimes$-characteristic} triplet. We start by stating Proposition~\ref{taulogcumdeux}, after which we introduce the free log-cumulants, which will be used only in the proof of Proposition~\ref{taulogcumdeux}.

\subsection{Moments of a {$\boxtimes$-infinitely} divisible measure}Proposition~\ref{taulogcumdeux} involves combinatorics on the symmetric group $\mathfrak{S}_n$. We first present the poset structure of $\mathfrak{S}_n$.

Let $n\in \N^*$. Let $\mathfrak{S}_n$ be the group of permutations of $\left\{1, \ldots , n\right\}$. For all permutation $\sigma\in\mathfrak{S}_n$, we denote by $\ell(\sigma)$ the numbers of cycles of $\sigma$ and we set $|\sigma|=n-\ell(\sigma)$. The minimal number of transpositions required to write $\sigma$ is $|\sigma|$ and we have $|\sigma|=0$ if and only if $\sigma$ is the identity $1_{\mathfrak{S}_n}$. We define a distance on $\mathfrak{S}_n$ by $d(\sigma_1,\sigma_2)=|\sigma^{-1}_1\sigma_2|$. The set $\mathfrak{S}_n$ can be endowed with a partial order by the relation $\sigma_1\preceq\sigma_2$ if $d(1_{\mathfrak{S}_n},\sigma_1)+ d(\sigma_1,\sigma_2)=d(1_{\mathfrak{S}_n},\sigma_2)$, or similarly if $\sigma_1$ is on a geodesic between $1_{\mathfrak{S}_n}$ and $\sigma_2$. The minimal element of $\mathfrak{S}_n$ is thus $1_{\mathfrak{S}_n}$.

For all $\sigma\in \mathfrak{S}_n$, we denote by $[1_{\mathfrak{S}_n},\sigma]$ the segment between $1_{\mathfrak{S}_n}$ and the $\sigma$, that is, the set $\{\pi\in \mathfrak{S}_n:\pi\preceq \sigma\}$. It is a lattice with respect to the partial order. A $(l+1)$-tuple $\Gamma = (\sigma_0,\ldots,\sigma_l)$ of $[1_{\mathfrak{S}_n},\sigma]$ such that $$ \sigma_0 \prec \sigma_1 \prec \cdots \prec \sigma_l\preceq \sigma$$ is called a \textit{simple chain} if and only if, for all $1\leq i\leq l$, $\sigma_{i-1}^{-1}\sigma_i$ is a non-trivial cycle. The length $k$ of a $k$-cycle $c$ will be denoted by $\sharp c$. We are now ready to state the main result of this section.\label{dist}

\begin{proposition}Let $\mu\in \ID(\U,\boxtimes)$ with $\boxtimes$-characteristic triplet $(\omega, b,\upsilon)$. For all $n\in \mathbb{N}^*$ and all $\sigma \in \mathfrak{S}_n$, we have\label{taulogcumdeux}
\begin{equation*}
\prod_{c\textup{ cycle of }\sigma}m_{\sharp c}(\mu)=e^{n L\kappa_1(\mu)}\cdot\displaystyle\sum_{\substack{\Gamma \textup{ simple chain in }[1_{\mathfrak{S}_n},\sigma]\\ \Gamma=(\sigma_0,\ldots,\sigma_{|\Gamma|}),\sigma_{|\Gamma|}=\sigma}} \frac{1}{|\Gamma| !}\displaystyle\prod_{i=1}^{|\Gamma|} L\kappa_{d(\sigma_i,\sigma_{i-1})+1}\left(\mu \right),
\end{equation*}
where
\begin{enumerate}
\item $L\kappa_1(\mu)=\Log(\omega)-b/2+\int_\U \left( \Re(\zeta)-1\right) \diff \upsilon(\zeta)$,
\item $L\kappa_2(\mu)=-b+\int_\U (\zeta -1)^2\diff \upsilon(\zeta)$
\item  and $L\kappa_n(\mu)=\int_\U (\zeta -1)^n\diff \upsilon(\zeta)$ for all $n\geq3$.
\end{enumerate} 
\end{proposition}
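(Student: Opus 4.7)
The plan is to adopt the framework of~\cite{Mastnak2010,Cebron2013} and introduce the free log-cumulants of a general measure $\mu\in\Ms$, defined by the expansion $\log S_\mu(z) = -\sum_{n\geq 1}L\kappa_n(\mu)\,z^{n-1}$. For $\mu\in\ID(\U,\boxtimes)$ with $\boxtimes$-characteristic triplet $(\omega,b,\upsilon)$, I would first substitute~\eqref{Sinf} into this definition; the geometric expansion $\frac{1-\zeta}{1+z(1-\zeta)} = \sum_{k\geq 0}(-1)^{k}(1-\zeta)^{k+1}z^{k}$, combined with the contributions of $\frac{b}{2}+bz$ and $\int i\Im(\zeta)\diff\upsilon$, shows after rearrangement that the coefficient of $z^{n-1}$ in $\log S_\mu(z)$ is precisely $-L\kappa_n(\mu)$, yielding the explicit formulas in items (1), (2), (3).

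Next I would reduce the identity to the case where $\sigma = c$ is a full $n$-cycle, in which case the left-hand side equals $m_n(\mu)$. The key observation is that if $\sigma$ decomposes into disjoint cycles $c_1,\ldots,c_r$ on supports $S_1,\ldots,S_r$, then $[1_{\mathfrak{S}_n},\sigma]$ factorizes as the direct product of the intervals associated with each cycle, and any simple chain in $[1,\sigma]$ ending at $\sigma$ decomposes uniquely as a shuffled interleaving of simple chains in each factor ending at $c_j$. The weight $1/|\Gamma|!$ is exactly the combinatorial factor that turns this shuffle structure into an exponential-generating-function product, so both sides of the claimed identity factor over the cycles of $\sigma$.

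For the single-cycle case, Lagrange inversion applied to the relation that $zS_\mu(z)/(1+z)$ is the compositional inverse of $M_\mu(z)-1$ yields
$$m_n(\mu) \;=\; \frac{e^{n L\kappa_1(\mu)}}{n}[z^{n-1}](1+z)^{n} \exp\!\Bigl(n\sum_{k\geq 2}L\kappa_k(\mu)\,z^{k-1}\Bigr).$$
Expanding both the exponential and the binomial and extracting the coefficient of $z^{n-1}$ produces a sum indexed by $l\geq 0$ and tuples $(k_1,\ldots,k_l)$ with $k_i\geq 2$ and $\sum_i(k_i-1)\leq n-1$, with weight $\frac{n^{l-1}}{l!}\binom{n}{n-1-\sum_i(k_i-1)}\prod_i L\kappa_{k_i}(\mu)$. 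The decisive combinatorial input, which I expect to be the main obstacle, is the identity that the number of simple chains $(\sigma_0\prec\cdots\prec\sigma_l = c)$ with ordered step cycle lengths $(k_1,\ldots,k_l)$ equals $n^{l-1}\binom{n}{n-1-\sum(k_i-1)}$: the binomial accounts for the choices of a starting point $\sigma_0\in[1,c]$ at rank $n-1-\sum(k_i-1)$, while the factor $n^{l-1}$ enumerates the minimal factorizations of the residual cycle $\sigma_0^{-1}c$ into cycles of the prescribed lengths, a Hurwitz-type formula going back to Biane.
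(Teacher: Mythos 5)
Your plan partly coincides with the paper's proof: the extraction of $L\kappa_1,L\kappa_2,L\kappa_n$ from~\eqref{Sinf} is exactly Proposition~\ref{loc}, and the reduction to a single cycle by factorizing $[1_{\mathfrak{S}_n},\sigma]$ over the cycles of $\sigma$ and absorbing the interleavings into the $1/|\Gamma|!$ is the same shuffle argument the paper uses. Where you genuinely diverge is the single-cycle case: the paper quotes the chain formula~\eqref{taulogcum} (Corollary~2.9 of~\cite{Cebron2013}, built on~\cite{Mastnak2010}) and transports it to $[1_{\mathfrak{S}_n},(1\cdots n)]$ via Biane's isomorphism (Lemma~\ref{Bia}) and Kreweras complementation, whereas you propose to rederive it by Lagrange inversion; your inversion formula $m_n(\mu)=\frac{e^{nL\kappa_1(\mu)}}{n}[z^{n-1}](1+z)^n\exp\bigl(n\sum_{k\geq2}L\kappa_k(\mu)z^{k-1}\bigr)$ is correct.

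However, the step you yourself flag as the main obstacle is a genuine gap, and the justification you sketch for it is wrong. The number of $\sigma_0\in[1_{\mathfrak{S}_n},c]$ with $|\sigma_0|=n-1-\sum_i(k_i-1)$ is a Narayana number, not $\binom{n}{n-1-\sum_i(k_i-1)}$ (for $n=4$ all $6$ transpositions lie below the $4$-cycle, while $\binom{4}{1}=4$); the permutation $\sigma_0^{-1}c$ is in general not a cycle (for $\sigma_0=(13)$, $c=(1234)$ it is a product of two disjoint transpositions); and Biane's count $n^{l-1}$ applies only to minimal factorizations of the full $n$-cycle, i.e.\ to $\sigma_0=1_{\mathfrak{S}_n}$ — for general $\sigma_0$ the number of geodesic factorizations of $\sigma_0^{-1}c$ into cycles of prescribed lengths is a product of such counts over the cycles of $\sigma_0^{-1}c$, summed over assignments of steps to cycles. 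Concretely, for $n=4$ and $(k_1,k_2)=(2,2)$ the correct total is $4\cdot3+2\cdot2=16=4^{1}\binom{4}{1}$, so the closed form holds but does not split as ``number of admissible $\sigma_0$'' times ``factorizations of a residual cycle'' in the way you describe. The counting identity itself appears to be true — it is, after Lemma~\ref{Bia} and Kreweras complementation, precisely the coefficient-wise content of~\eqref{taulogcum} — but establishing it requires either this extra bookkeeping (Kreweras' enumeration of non-crossing partitions by block type combined with Biane's factorization count, followed by a resummation yielding the binomial), or simply invoking the cited results of~\cite{Mastnak2010,Cebron2013} as the paper does. As written, your argument does not prove it.
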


The proof of Proposition~\ref{taulogcumdeux} requires the notion of free log-cumulants and we postpone it until Section~\ref{proofprop}. In the mean time, we review the properties of the free log-cumulants that we shall use.

\subsection{The non-crossing partitions}The definition of the free log-cumulants involves combinatorial formulae which are related to non-crossing partitions.  We describe here the poset structure of the set of non-crossing partitions $NC(n)$, and we shall see that it is intimately linked to the poset structure of $\mathfrak{S}_n$.


A partition of the set $\left\{1, \ldots , n\right\}$ is said to have a crossing if there exist $1\leq i < j < k < l\leq n$, such that $i$ and $k$ belong to some block of the partition and $j$ and $l$ belong to another block. If a partition has no crossings, it is called non-crossing. The set of all non-crossing partitions of $\left\{1, \ldots , n\right\}$ is denoted by $NC(n)$. It is a lattice with respect to the relation of fineness defined as follows: for all $\pi_1$ and $\pi_2\in NC(n)$, we declare that $\pi_1 \preceq \pi_2$ if every block of $\pi_1$ is contained in a block of $\pi_2$. We denote respectively by $0_n$ and $1_n$ the minimal element  $\left\{\{1\}, \ldots , \{n\}\right\}$ of $NC (n)$, and the maximal element  $\left\{\{1, \ldots , n\}\right\}$ of $NC (n)$.

In~\cite{Biane1997b}, Biane describes an isomorphism between the posets $NC(n)$ and $[1_{\mathfrak{S}_n},(1\cdots n)]\subset \mathfrak{S}_n$. It consists simply in defining, from every partition $\pi \in NC(n)$, the permutation $\sigma_\pi$ which is the product, over all blocks $\{i_1<\cdots <i_k\}$ of $\pi$, of the $k$-cycle $(i_1\cdots i_k)$. In other words, take the cycles of $\sigma_\pi$ to be the blocks of $\pi$ with the cyclic order induced by the natural order of $\left\{1, \ldots , n\right\}$. Note that $\sigma_{0_n}=1_{\mathfrak{S}_n}$ and $\sigma_{1_n}=(1\cdots n)$.
\begin{lemma}The function $\pi\mapsto \sigma_\pi$ is a poset isomorphism between  $NC(n)$ and  $[1_{\mathfrak{S}_n},(1\cdots n)]$.\label{Bia}
\end{lemma}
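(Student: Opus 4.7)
The plan is to exploit the identity $d(\sigma_1,\sigma_2)=n-\ell(\sigma_1^{-1}\sigma_2)$, which recasts the geodesic relation $\sigma_1\preceq\sigma_2$ as $\ell(\sigma_1)+\ell(\sigma_1^{-1}\sigma_2)=n+\ell(\sigma_2)$. Specialised to $\sigma_2=(1\cdots n)$, this reads
\[
\ell(\sigma)+\ell(\sigma^{-1}(1\cdots n))=n+1,
\]
which will be the pivot of the whole argument, used in conjunction with the Kreweras complement on $NC(n)$.

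First I would verify that $\pi \mapsto \sigma_\pi$ lands in $[1_{\mathfrak{S}_n},(1\cdots n)]$. By construction $\ell(\sigma_\pi)$ equals the number of blocks of $\pi$. On the other hand, one checks directly from the definition of $\sigma_\pi$ that $\sigma_\pi^{-1}(1\cdots n)$ is the permutation associated in the same way with the Kreweras complement $K(\pi)\in NC(n)$; since the numbers of blocks of $\pi$ and of $K(\pi)$ sum to $n+1$, the identity above holds, so $\sigma_\pi\preceq(1\cdots n)$. Injectivity is immediate, $\pi$ being recovered as the partition whose blocks are the supports of the cycles of $\sigma_\pi$. For surjectivity, given $\sigma\preceq(1\cdots n)$, I set $\pi$ to be the cycle-support partition of $\sigma$; I then need to show that $\pi$ is non-crossing and that the cyclic order within each cycle of $\sigma$ agrees with the one induced by $(1\cdots n)$, so that $\sigma=\sigma_\pi$. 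This is the crux, and I would prove it by induction on $n-\ell(\sigma)$: picking a transposition $\tau$ on a geodesic from $1_{\mathfrak{S}_n}$ to $\sigma$, the product $\tau\sigma$ is closer to the identity, hence by the inductive hypothesis corresponds to a non-crossing partition; the merge or split produced when post-multiplying by $\tau$ preserves the non-crossing property precisely because $\sigma\preceq(1\cdots n)$ forbids $\tau$ from exchanging two elements in crossing positions relative to the cyclic order of $(1\cdots n)$.

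For the order isomorphism, I would treat the two implications separately. If $\pi_1\preceq\pi_2$, then each block $B$ of $\pi_2$ is subdivided by $\pi_1$ into a non-crossing sub-partition with respect to the cyclic order read off from $\sigma_{\pi_2}|_B$. Applying the well-definedness step inside each such $B$ yields $\ell(\sigma_{\pi_1}|_B)+\ell((\sigma_{\pi_1}|_B)^{-1}\sigma_{\pi_2}|_B)=|B|+1$. Summing over the blocks of $\pi_2$ collapses to $\ell(\sigma_{\pi_1})+\ell(\sigma_{\pi_1}^{-1}\sigma_{\pi_2})=n+\ell(\sigma_{\pi_2})$, i.e.\ $\sigma_{\pi_1}\preceq\sigma_{\pi_2}$. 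Conversely, $\sigma_{\pi_1}\preceq\sigma_{\pi_2}$ together with the general inequality $\ell(\alpha\beta)\leq \ell(\alpha)+n-\ell(\beta)$ (saturated only when the cycle-supports of $\alpha$ refine those of $\alpha\beta$) forces every cycle-support of $\sigma_{\pi_1}$ to be contained in a cycle-support of $\sigma_{\pi_2}$, which says exactly $\pi_1\preceq\pi_2$.

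The main obstacle is the surjectivity claim, namely that the single geodesic condition $\ell(\sigma)+\ell(\sigma^{-1}(1\cdots n))=n+1$ simultaneously forces the non-crossing shape \emph{and} the compatible cyclic orientation on each cycle of $\sigma$. Every other step is bookkeeping around the cycle-count identity and the Kreweras complement.
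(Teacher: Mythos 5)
The paper itself offers no proof of this lemma: it is quoted from Biane's work, so there is no internal argument to compare yours against, and your sketch has to stand on its own. Its skeleton is the standard one (Kreweras complement for well-definedness, trivial injectivity, a geodesic induction for surjectivity, block-by-block restriction for the order compatibility), and the order-isomorphism part is sound modulo the facts you invoke. One caveat before the main issue: in this paper the Kreweras complement $K_\pi$ is \emph{defined} through the map $\sigma\mapsto\sigma^{-1}\sigma_\pi$, i.e.\ through the very isomorphism being proved. Your well-definedness step must therefore use the independent combinatorial definition of $K(\pi)$ and actually verify both $\sigma_\pi^{-1}(1\cdots n)=\sigma_{K(\pi)}$ and that the numbers of blocks of $\pi$ and of $K(\pi)$ sum to $n+1$; neither is a one-line check, and quoting them from the paper would be circular.

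More seriously, the surjectivity step that you rightly call the crux is only asserted. The mechanism you offer (``$\sigma\preceq(1\cdots n)$ forbids $\tau$ from exchanging two elements in crossing positions'') does not capture the actual difficulty: since $\pi'$ is non-crossing, the two blocks being merged by $\tau$ never cross \emph{each other}; the danger is that the merged block crosses a \emph{third} block of $\pi'$, or that the merged cycle is traversed in a cyclic order incompatible with $(1\cdots n)$. To close the induction you must show that either defect forces $|\sigma|+|\sigma^{-1}(1\cdots n)|>n-1$, for instance by computing $\ell(\sigma^{-1}c)$ for a single cycle $c$ in terms of cyclic descents, or you must abandon the induction in favour of the classical counting argument (injectivity of $\pi\mapsto\sigma_\pi$ plus the fact that both $NC(n)$ and the geodesic interval $[1_{\mathfrak{S}_n},(1\cdots n)]$ have cardinality the Catalan number). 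As written, the proposal correctly isolates the hard step but does not prove it.
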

Let $\pi \in NC(n)$. It is immediate that the map $\sigma\mapsto \sigma^{-1}\sigma_{\pi}$ is an order-reversing bijection of $[1_{\mathfrak{S}_n},\sigma_\pi]$. The corresponding decreasing bijection $K_\pi$ of $\{\pi' \in NC(n): \pi' \preceq \pi \}$ is called the Kreweras complementation map with respect to $\pi$. If $\pi=1_n$, we set $K(\sigma)=K_{1_n}(\sigma)$.\label{Kreweras}

Let $n\in \mathbb{N}$. A chain in the lattice $NC(n)$ is a $(l+1)$-tuple of the form $\Gamma = (\pi_0,\ldots,\pi_l)$ with $\pi_0,\ldots,\pi_l\in NC(n) $ such that $ \pi_0 \prec \pi_1 \prec \cdots \prec \pi_l$ (notice that we do not impose $\pi_0=0_n$ nor $\pi_l=1_n$, unlike in~\cite{Mastnak2010}). The positive integer $l$ appearing is called the length of the chain, and is denoted by $|\Gamma|$. If, for all $1\leq i\leq l$, $K_{\pi_i}(\pi_{i-1})$ has exactly one block which has more than two elements, we say that $\Gamma$ is a simple chain in $NC(n)$.
This way, we have an one-to-one correspondence between simple chains in $NC(n)$ and simple chains in $[1_{\mathfrak{S}_n},(1\cdots n)]$ via the isomorphism of Lemma~\ref{Bia}.

\subsection{Free log-cumulants}

Let $\mu\in  \Ms$. We denote by $W_\mu(z)$ the inverse under composition of $zM_\mu(z)$, and we denote by $C_\mu(z)$ the formal power series $M_\mu(W_\mu(z))$. The coefficients $(\kappa_k(\mu))_{k\in \mathbb{N}^*}$ of $$C_\mu(z)=1+\sum_{k=1}^\infty \kappa_k(\mu)z^k$$ are known as the \textit{free cumulants} of $\mu$. Let $\pi \in NC(n)$. We set $$\kappa\left[\pi\right](\mu)=\prod_{B\text{ block of }\pi}\kappa_{|B|}(\mu).$$
For all $n\geq 2$, we set
$$L\kappa_n(\mu)=m_1(\mu)^{-n}\displaystyle\sum_{\substack{\Gamma \text{ chain in }NC(n)\\ \Gamma=(\pi_0,\ldots,\pi_{|\Gamma|})\\\pi_0=0_n, \pi_{|\Gamma|}=1_n}} \frac{(-1)^{1+|\Gamma|}}{|\Gamma|} \displaystyle\prod_{i=1}^{|\Gamma|} \kappa\left[K_{\pi_i}(\pi_{i-1})\right](\mu).$$
We shall call the coefficients $(L\kappa_k(\mu))_{n\leq 2}$ the \textit{free log-cumulants} of $\mu$. We define also the \mbox{\textit{$LS$-transform}} of $\mu$ by
 $$LS_\mu(z)=\sum_{n=2}^\infty L\kappa_n(\mu)z^n.$$
 Let us define also $L\kappa_1(\mu)$, or $L\kappa(\mu)$, the free log-cumulant of order $1$ of $\mu$, by $\Log(m_1(\mu))$, where $\Log$ is the principal logarithm.


\begin{remark}From Proposition~4.5 of~\cite{Mastnak2010}, we see that this definition of $LS_\mu$ extends the definition of the $LS$-transform of $\mu$ given by Definition~1.4 of~\cite{Mastnak2010} in the case $m_1(\mu)\neq1$. The definition of the free log-cumulants $(L\kappa_n(\mu))_{n\in \N^*}$ follows~\cite{Cebron2013}, but we observe that $L\kappa_n(\mu)$ would be denoted by $L\kappa_n(A)$ in~\cite{Cebron2013},  where $A$  would be a random variable whose law is $\mu$.
\end{remark}
As the free cumulants linearise $\boxplus$, the free log-cumulants linearise $\boxtimes$.

\begin{proposition}[Corollary~1.5 of~\cite{Mastnak2010}, Proposition~2.11 of~\cite{Cebron2013})]For all $\mu,\nu\in \Ms$, we have $L\kappa_1(\mu\boxtimes \nu)\equiv L\kappa_1(\mu)+L\kappa_1(\nu) \pmod{2i\pi}$ and, for all $n\geq 2$,
$$L\kappa_n(\mu\boxtimes \nu)=L\kappa_n(\mu)+L\kappa_n(\nu) .$$
\end{proposition}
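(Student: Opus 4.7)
The plan is to reduce both parts of the statement to the $\boxtimes$-multiplicativity of the $S$-transform recalled in Section~\ref{SSS}, treating the scalar identity for $L\kappa_1$ and the additivity for $L\kappa_n$ ($n\geq 2$) separately.

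For the $n=1$ part, I would start from the fact that the defining inverse-under-composition relation forces $S_\mu(0) = 1/m_1(\mu)$. Evaluating the identity $S_{\mu\boxtimes\nu} = S_\mu \cdot S_\nu$ at $z=0$ then yields $m_1(\mu\boxtimes\nu) = m_1(\mu)\,m_1(\nu)$. Taking principal logarithms and using $\Log(wz) \equiv \Log(w)+\Log(z) \pmod{2i\pi}$ for $w,z\in\C^*$ immediately gives $L\kappa_1(\mu\boxtimes\nu) \equiv L\kappa_1(\mu) + L\kappa_1(\nu) \pmod{2i\pi}$.

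For $n\geq 2$, the strategy is to repackage $L\kappa_n(\mu)$ as a coefficient of a formal power series built from $S_\mu$ that is manifestly $\boxtimes$-additive. Introduce the normalized $S$-transform
$$\widetilde{S}_\mu(z) := m_1(\mu)\,S_\mu(z),$$
a formal power series with $\widetilde{S}_\mu(0)=1$. By the $n=1$ case, $\widetilde{S}_{\mu\boxtimes\nu} = \widetilde{S}_\mu \cdot \widetilde{S}_\nu$, so $\log \widetilde{S}_\mu(z)$ is a well-defined formal power series vanishing at $z=0$ and is additive under $\boxtimes$. It thus suffices to show that, for $n\geq 2$, the coefficient of $z^n$ in $\log \widetilde{S}_\mu(z)$ coincides with $L\kappa_n(\mu)$.

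To establish this combinatorial identification, I would expand $\log \widetilde{S}_\mu = \sum_{k\geq 1} \frac{(-1)^{k+1}}{k}(\widetilde{S}_\mu-1)^k$, then rewrite each factor $\widetilde{S}_\mu - 1$ in terms of free cumulants via the moment–cumulant formula and the relations $C_\mu = M_\mu\circ W_\mu$, $W_\mu \circ (zM_\mu(z)) = z$. Via Biane's poset isomorphism (Lemma~\ref{Bia}) between $NC(n)$ and $[1_{\mathfrak{S}_n},(1\cdots n)]$, the $k$-fold product reorganizes as a sum indexed by chains $0_n = \pi_0 \prec \pi_1 \prec \cdots \prec \pi_k = 1_n$ in $NC(n)$, with consecutive steps governed by the Kreweras complement $K_{\pi_i}(\pi_{i-1})$. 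After collecting terms this reproduces exactly the chain sum of the definition, normalized by $m_1(\mu)^{-n}$.

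The main obstacle is executing this Möbius-inversion calculation cleanly on the lattice of non-crossing partitions; happily, the identification is essentially the content of Corollary~1.5 of~\cite{Mastnak2010} and Proposition~2.11 of~\cite{Cebron2013}, which I would invoke rather than reprove. Once the identification is in hand, the additivity $L\kappa_n(\mu\boxtimes\nu) = L\kappa_n(\mu) + L\kappa_n(\nu)$ for $n\geq 2$ follows instantly from $\log \widetilde{S}_{\mu\boxtimes\nu} = \log \widetilde{S}_\mu + \log \widetilde{S}_\nu$, concluding the proof.
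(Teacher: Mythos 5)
Your argument is correct in substance, but note first that the paper gives no proof of this proposition: it is quoted directly from Corollary~1.5 of~\cite{Mastnak2010} and Proposition~2.11 of~\cite{Cebron2013}, so the comparison is really with the surrounding material rather than with an in-paper argument. Your route --- deduce $m_1(\mu\boxtimes\nu)=m_1(\mu)\,m_1(\nu)$ from $S_\mu(0)=1/m_1(\mu)$ and the multiplicativity of the $S$-transform, then get the additivity for $n\geq 2$ from the additivity of $\log\big(m_1(\mu)\cdot S_\mu(z)\big)$ under $\boxtimes$ --- is precisely the derivation one reads off from the analytic characterization $LS_\mu(z)=-z\log\big(m_1(\mu)\cdot S_\mu(z)\big)$, which the paper states just below (Corollary~6.12 of~\cite{Mastnak2010}). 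Granting that characterization, your proof is complete and in fact more self-contained than the paper's bare citation. Two corrections are needed, though. First, your coefficient extraction is off by a shift and a sign: since $LS_\mu(z)=\sum_{n\geq 2}L\kappa_n(\mu)z^n=-z\log\big(m_1(\mu)S_\mu(z)\big)$, the free log-cumulant $L\kappa_n(\mu)$ is \emph{minus} the coefficient of $z^{n-1}$ in $\log\widetilde{S}_\mu(z)$, not the coefficient of $z^{n}$; this does not affect the additivity conclusion, since any fixed linear function of the coefficients of an additive series is additive, but the identity as you wrote it is false. Second, and more seriously for the logic of your write-up: the result you propose to invoke for the combinatorial identification step is Corollary~6.12 of~\cite{Mastnak2010}, not Corollary~1.5 --- the latter \emph{is} the additivity statement you are trying to prove, so citing it at that point would make the argument circular. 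Replacing that citation by Corollary~6.12 (or by actually carrying out the chain-sum computation on $NC(n)$ that you sketch) repairs the proof.
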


For concrete calculations, one would prefer to have an analytical description of the free log-cumulants.
We have $S_\mu(0)=1/m_1(\mu)$ and by consequence, we can define the formal logarithm of $m_1(\mu)\cdot S_\mu$ as the formal series $\log (m_1(\mu)\cdot S_\mu)=-\sum_{n=1}^\infty \frac{1}{n}(1-m_1(\mu)S_\mu(z))^n$.

\begin{proposition}[Corollary~6.12 of~\cite{Mastnak2010}]
Let $\mu\in  \Ms$. We have $$LS_\mu(z)=-z\log (m_1(\mu)\cdot S_\mu(z)).$$
\end{proposition}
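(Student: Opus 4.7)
The target is the formal-series identity $LS_\mu(z) = -z\log(m_1(\mu)\, S_\mu(z))$, starting from the combinatorial definition of $L\kappa_n(\mu)$ as a signed sum over maximal chains in $NC(n)$ weighted by Kreweras complements. My plan is to compare coefficients of $z^n$ on both sides, using Lagrange inversion to transfer the analytic side into an expression about free cumulants, and then match with the combinatorial side.

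\emph{Step 1 (Speicher's functional equation).} First I would derive
$$C_\mu\bigl(zS_\mu(z)\bigr) = 1 + z,$$
directly from the definitions recalled in Section~\ref{SSS}. Indeed, $\chi_\mu(z) := zS_\mu(z)/(1+z)$ is the compositional inverse of $M_\mu(z) - 1$, so setting $w := \chi_\mu(z)$ one has $M_\mu(w) = 1 + z$ and $wM_\mu(w) = zS_\mu(z)$; combining this with the relations $C_\mu(y) = M_\mu(W_\mu(y))$ and $W_\mu \circ (y \mapsto yM_\mu(y)) = \mathrm{id}$ yields the claimed identity. Consequently, $T(z) := zS_\mu(z)$ is the compositional inverse of the series $f(w) := C_\mu(w) - 1 = \sum_{k\geq 1}\kappa_k(\mu)\,w^k$.

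\emph{Step 2 (Lagrange--Bürmann reduction).} Set $g(w) := f(w)/w = \sum_{k\geq 0}\kappa_{k+1}(\mu)\,w^k$, which is a regular power series with $g(0) = m_1(\mu) \neq 0$. Then $m_1(\mu)S_\mu(z) = m_1 T(z)/z = m_1/g(T(z))$, so $\log(m_1 S_\mu(z)) = h(T(z))$ with $h(w) := \log(m_1/g(w))$ regular, vanishing at $0$, and $h'(w) = -g'(w)/g(w)$. Applying Lagrange--Bürmann, and simplifying by means of the identities $g'/g^n = -\tfrac{1}{n-1}(d/dw)(1/g^{n-1})$ and $[w^{n-2}](d/dw)G = (n-1)[w^{n-1}]G$ valid for regular $G$, one obtains
$$[z^{n-1}]\log(m_1\,S_\mu(z)) = \frac{1}{n-1}[w^{n-1}]\biggl(\frac{w}{f(w)}\biggr)^{n-1},\qquad n\geq 2.$$
Therefore the target identity $LS_\mu(z) = -z\log(m_1 S_\mu(z))$ reduces to the purely combinatorial statement
$$L\kappa_n(\mu) = -\frac{1}{n-1}[w^{n-1}]\biggl(\frac{w}{f(w)}\biggr)^{n-1},\qquad n\geq 2. \qquad (\star)$$

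\emph{Step 3 (combinatorial matching).} To prove $(\star)$ I would expand $(w/f(w))^{n-1} = g(w)^{-(n-1)}$ via the generalized binomial series, producing an alternating sum indexed by ordered compositions $(k_1,\ldots,k_l)$ of $n-1$ with parts $k_i \geq 1$, weighted by $\prod_i \kappa_{k_i+1}(\mu)$ and a signed binomial coefficient. The task is then to reorganise this sum as a sum over maximal chains $(0_n = \pi_0 \prec \cdots \prec \pi_l = 1_n)$ in $NC(n)$ weighted by $\prod_i \kappa[K_{\pi_i}(\pi_{i-1})](\mu)$: each cover relation $\pi_{i-1} \prec \pi_i$ contributes via the block sizes of the Kreweras complement $K_{\pi_i}(\pi_{i-1})$, the factor $1/|\Gamma|$ in the definition of $L\kappa_n(\mu)$ absorbing the linear ordering of composition entries, while the alternating signs $(-1)^{1+|\Gamma|}$ reflect precisely those arising from the binomial expansion of $g^{-(n-1)}$.

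\emph{Main obstacle.} The analytic steps 1 and 2 are routine, so the essential difficulty is the combinatorial identification in Step 3. This is essentially the content of Corollary~6.12 of~\cite{Mastnak2010}; a self-contained proof would require a Möbius-inversion argument on the lattice $NC(n)$, exploiting the isomorphism of Lemma~\ref{Bia} between $NC(n)$ and the interval $[1_{\mathfrak{S}_n},(1\cdots n)] \subset \mathfrak{S}_n$ together with the multiplicative structure of intervals $[0_n,\pi]$ as products of smaller non-crossing partition lattices.
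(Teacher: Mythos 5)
Your Steps 1 and 2 are correct: the functional equation $C_\mu(zS_\mu(z))=1+z$ follows exactly as you say from the definitions in Section~\ref{SSS}, and the Lagrange--Bürmann computation correctly reduces the proposition to the identity $(\star)$, namely $L\kappa_n(\mu)=-\frac{1}{n-1}[w^{n-1}](w/f(w))^{n-1}$ for $n\geq 2$. The problem is that $(\star)$ \emph{is} the proposition: all of the content of the statement lives in the passage from the chain sum defining $L\kappa_n(\mu)$ to an analytic expression in the free cumulants, and your Step 3 does not carry this out. It gestures at a correspondence between compositions of $n-1$ and chains, and then explicitly defers to Corollary~6.12 of the very reference whose statement is being proved --- which is circular. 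The sketch is also on shaky combinatorial ground: the chains in the definition of $L\kappa_n$ run from $0_n$ to $1_n$ but are \emph{not} saturated (the steps $\pi_{i-1}\prec\pi_i$ are not cover relations), so "each cover relation contributes via the block sizes of $K_{\pi_i}(\pi_{i-1})$" does not describe the sum; each $K_{\pi_i}(\pi_{i-1})$ is a full partition of $\{1,\ldots,n\}$ whose singleton blocks contribute factors $\kappa_1=m_1(\mu)$, and the bookkeeping of these powers of $m_1(\mu)$ against the prefactor $m_1(\mu)^{-n}$ (together with the multiplicity of chains realizing a given sequence of complement types) is precisely the delicate part. Asserting that "$1/|\Gamma|$ absorbs the linear ordering" and that the signs "reflect precisely" the binomial ones is the claim to be proved, not an argument.

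Note also that this is not the route the paper takes. The paper does not reprove the Mastnak--Nica identity at all: it takes Corollary~6.12 as known in the normalized case $m_1=1$ and reduces the general case to it by passing to $\varphi_\mu(X^k)=m_1(\mu)^{-k}m_k(\mu)$, for which one checks $S_{\varphi_\mu}=m_1(\mu)\cdot S_\mu$ and $LS_{\varphi_\mu}=LS_\mu$. If you are permitted to cite the normalized case (as the attribution in the statement suggests), that two-line dilation argument is what is expected, and the only thing genuinely requiring verification is the invariance of $LS_\mu$ under this normalization. If you intend a self-contained proof, your analytic reduction is a fine start, but Step 3 must be completed --- e.g.\ by the Möbius/exp-log calculus in the incidence algebra of $NC(n)$ that you allude to --- before the proposal counts as a proof.
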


\begin{remark}Technically, Corollary~6.12 of~\cite{Mastnak2010} only deals with measures, or more precisely linear functionals on $\C[X]$, such that $m_1(\mu)=1$. One can adapt the proof presented in~\cite{Mastnak2010}. Alternatively, argue as follows. From a measure $\mu\in \Ms$, we can define ${\varphi_\mu:\C[X]\to\C}$ such that $\varphi_\mu(X^k)=m_1(\mu)^{-k}m_k(\mu)$. Then, we observe that ${S_{\varphi_\mu}=m_1(\mu)\cdot S_\mu(z)}$ and $LS_{\varphi_\mu}=LS_\mu$. As a consequence,
$LS_\mu(z)=LS_{\varphi_\mu}=-z\log (S_{\varphi_\mu})=-z\log (m_1(\mu)\cdot S_\mu(z)).$
\end{remark}

Let $\pi \in NC(n)$ be such that $\pi$ has exactly one block which has at least two elements. Let $ \{j_1,\ldots ,j_N\}$ be this block of $\pi$, with $j_1 < \ldots < j_N$. Let us denote by $L\kappa\left[\pi\right](\mu)$ the free log-cumulant $L\kappa_N(\mu)$.

\begin{proposition}[Corollary~2.9 of~\cite{Cebron2013}]
Let $\mu\in  \Ms$ and $n\in \mathbb{N}^*$. We have
\begin{equation}
m_n(\mu)=e^{n L\kappa_1(\mu)}\cdot\displaystyle\sum_{\substack{\Gamma \textup{ simple chain in }NC(n)\\ \Gamma=(\pi_0,\ldots,\pi_{|\Gamma|}),\pi_0=0_n}} \frac{1}{|\Gamma| !} \displaystyle\prod_{i=1}^{|\Gamma|} L\kappa\Big[K_{\pi_i}(\pi_{i-1})\Big]\left(\mu \right).\label{taulogcum}
\end{equation}
\end{proposition}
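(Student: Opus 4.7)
The plan is to split the proof into two essentially independent pieces: (a) extracting the explicit formulas for $L\kappa_n(\mu)$ from the $\boxtimes$-characteristic triplet $(\omega,b,\upsilon)$, and (b) establishing the combinatorial identity expressing $\prod_{c\textup{ cycle of }\sigma}m_{\sharp c}(\mu)$ as a sum over simple chains in $[1_{\mathfrak{S}_n},\sigma]$ ending at~$\sigma$.

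For (a), I start from the identity $LS_\mu(z)=-z\log(m_1(\mu)\cdot S_\mu(z))$ recalled from Mastnak--Nica and compute everything directly from \eqref{Sinf}. Setting $z=0$ in \eqref{Sinf} gives
\[m_1(\mu)=S_\mu(0)^{-1}=\omega\exp\Bigl(-\tfrac{b}{2}+\int_\U(\Re(\zeta)-1)\,\diff\upsilon(\zeta)\Bigr),\]
and applying $\Log$ yields the formula for $L\kappa_1(\mu)$. Multiplying $S_\mu(z)$ by $m_1(\mu)$ kills the $z$-independent part of the exponent in \eqref{Sinf}, and the remaining integrand simplifies via $-(1-\zeta)+(1-\zeta)/(1+z(1-\zeta))=-z(1-\zeta)^2/(1+z(1-\zeta))$, so
\[\log(m_1(\mu)S_\mu(z))=bz-\int_\U\frac{z(1-\zeta)^2}{1+z(1-\zeta)}\,\diff\upsilon(\zeta).\]
Expanding the geometric series inside the integral and multiplying by $-z$ gives $LS_\mu(z)=-bz^2+\sum_{n\geq 2}z^n\int_\U(\zeta-1)^n\,\diff\upsilon(\zeta)$, and reading off the coefficient of $z^n$ in $LS_\mu(z)=\sum_{n\geq 2}L\kappa_n(\mu)z^n$ produces the claimed expressions for $L\kappa_2$ and $L\kappa_n$, $n\geq 3$.

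For (b), I first treat the case $\sigma=(1\,\cdots\,n)$. The Mastnak--Nica formula \eqref{taulogcum} expresses $m_n(\mu)$ as a sum over simple chains in $NC(n)$ starting at $0_n$ with unrestricted endpoint. Via Biane's isomorphism (Lemma~\ref{Bia}), $NC(n)\cong[1_{\mathfrak{S}_n},(1\,\cdots\,n)]$, and under this identification the size of the non-trivial block of $K_{\pi_i}(\pi_{i-1})$ equals the cycle length of $\sigma_{i-1}^{-1}\sigma_i$, namely $d(\sigma_i,\sigma_{i-1})+1$; hence $L\kappa[K_{\pi_i}(\pi_{i-1})]=L\kappa_{d(\sigma_i,\sigma_{i-1})+1}(\mu)$. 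To swap "chains starting at $1_{\mathfrak{S}_n}$ with free endpoint" for "chains ending at $(1\,\cdots\,n)$ with free start", I apply the Kreweras anti-automorphism $\sigma\mapsto \sigma^{-1}(1\,\cdots\,n)$ of $[1_{\mathfrak{S}_n},(1\,\cdots\,n)]$ (see Section~\ref{Kreweras}), which reverses each chain and conjugates every step, so preserves the multiset of step-cycle-lengths and thus each summand.

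For general $\sigma$ with cycles $c_1,\ldots,c_k$, I use the factorisation $[1_{\mathfrak{S}_n},\sigma]\cong\prod_{j=1}^k[1_{\mathfrak{S}_{\sharp c_j}},c_j]$, which follows from the fact that geodesics between $1_{\mathfrak{S}_n}$ and $\sigma$ in the transposition metric decompose along the disjoint supports of the cycles of $\sigma$. Because a simple step $\mathbf{x}_{i-1}^{-1}\mathbf{x}_i$ is by definition a single non-trivial cycle, each step of a simple chain in the product lives in exactly one factor; therefore a simple chain $\Gamma$ ending at $\sigma$ corresponds bijectively to a $k$-tuple of simple chains $(\Gamma_j)$ in the factors ending at $c_j$, together with a shuffle of their steps. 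Writing $l_j=|\Gamma_j|$ and $l=\sum_j l_j$, each of the $\binom{l}{l_1,\ldots,l_k}$ shuffles yields the same product $\prod_i L\kappa_{d+1}$, and the multinomial identity $\frac{1}{l!}\binom{l}{l_1,\ldots,l_k}=\prod_j\frac{1}{l_j!}$ turns the sum over simple chains in the product into the product of the sums over simple chains in each factor. Applying the case $\sigma=(1\,\cdots\,n)$ to each factor and combining the prefactors via $\sum_j\sharp c_j=n$ into $e^{nL\kappa_1(\mu)}$ completes the proof. The main obstacle is the combinatorial bijection in this last step: one must carefully verify that "simple chain" behaves well under the product decomposition, and that the only freedom in reconstructing $\Gamma$ from $(\Gamma_j)$ is the choice of shuffle.
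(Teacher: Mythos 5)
The statement you were asked to prove is the identity \eqref{taulogcum} itself: for an \emph{arbitrary} $\mu\in\Ms$, the moment $m_n(\mu)$ equals $e^{nL\kappa_1(\mu)}$ times the sum over simple chains in $NC(n)$ starting at $0_n$, weighted by $1/|\Gamma|!$. Your proposal does not prove this. Part (a) computes the free log-cumulants of a $\boxtimes$-infinitely divisible measure from its characteristic triplet $(\omega,b,\upsilon)$ — that is the content of Proposition~\ref{loc} and is irrelevant here, since \eqref{taulogcum} involves no infinite divisibility and no characteristic triplet. Part (b) derives the symmetric-group reformulation \eqref{taulogcumtrois} and its extension to arbitrary $\sigma$ (Proposition~\ref{taulogcumdeux}) — but it does so by explicitly invoking ``the Mastnak--Nica formula \eqref{taulogcum}'' as a known input before transporting it through Biane's isomorphism and the Kreweras complementation. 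As a proof of \eqref{taulogcum} this is circular: you assume the statement in order to prove its corollaries. (Those corollaries you do handle essentially as the paper does, including the shuffle/multinomial argument for the cycle decomposition, but they are downstream of the target, not the target.)

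What is actually missing is any derivation of \eqref{taulogcum} from the \emph{definition} of the free log-cumulants given in the paper, namely
\[
L\kappa_n(\mu)=m_1(\mu)^{-n}\sum_{\substack{\Gamma\textup{ chain in }NC(n)\\ \pi_0=0_n,\ \pi_{|\Gamma|}=1_n}}\frac{(-1)^{1+|\Gamma|}}{|\Gamma|}\prod_{i=1}^{|\Gamma|}\kappa\bigl[K_{\pi_i}(\pi_{i-1})\bigr](\mu),
\]
together with the moment--free-cumulant relation $m_n(\mu)=\sum_{\pi\in NC(n)}\kappa[\pi](\mu)$. One must show that exponentiating this ``logarithm'' in the incidence algebra of the lattice $NC(n)$ (an exponential formula that collapses onto \emph{simple} chains with the $1/|\Gamma|!$ weights) recovers the moments. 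That inversion is the genuine content of Corollary~2.9 of~\cite{Cebron2013}, and the paper itself does not reprove it — it imports it by citation — so a blind proof attempt would have to supply exactly this combinatorial step, which your proposal never addresses.
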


%

\subsection{Proof of Proposition~\ref{taulogcumdeux}}\label{proofprop}
Let us formulate a more general formula than~\eqref{taulogcum} with the help of the symmetric group.
\begin{lemma}
Let $\mu\in  \Ms$ and $n\in \mathbb{N}^*$. For all $\sigma \in \mathfrak{S}_n$, we have
\begin{equation}
\prod_{c\text{ cycle of }\sigma}m_{\sharp c}(\mu)=e^{n L\kappa_1(\mu)}\cdot\displaystyle\sum_{\substack{\Gamma \textup{ simple chain in }[1_{\mathfrak{S}_n},\sigma]\\ \Gamma=(\sigma_0,\ldots,\sigma_{|\Gamma|}),\sigma_{|\Gamma|}=\sigma}} \frac{1}{|\Gamma| !}\displaystyle\prod_{i=1}^{|\Gamma|} L\kappa_{d(\sigma_i,\sigma_{i-1})+1}\left(\mu \right).\label{taulogcumtrois}
\end{equation}
\end{lemma}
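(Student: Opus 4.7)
The plan is to use the cycle decomposition of $\sigma$ to reduce the formula to a product of instances of the already-established formula \eqref{taulogcum}. First, write $\sigma = c_1 \cdots c_r$ as a product of disjoint cycles, with $c_i$ a $k_i$-cycle supported on $S_i \subset \{1, \ldots, n\}$. Any $\tau \preceq \sigma$ has each of its cycles supported entirely within some $S_i$ (otherwise the equality $|\tau| + |\tau^{-1}\sigma| = |\sigma|$ would fail), so $\tau$ factors uniquely as $\tau = \tau_1 \cdots \tau_r$ with $\tau_i \preceq c_i$ in $\mathfrak{S}_{S_i}$. This yields a poset isomorphism $[1_{\mathfrak{S}_n}, \sigma] \cong \prod_{i=1}^r [1_{\mathfrak{S}_{S_i}}, c_i]$ under which the length $|\cdot|$ is additive. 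In particular, $\prod_c m_{\sharp c}(\mu) = \prod_i m_{k_i}(\mu)$ factors accordingly, and the exponential prefactor $e^{n L\kappa_1(\mu)}$ splits as $\prod_i e^{k_i L\kappa_1(\mu)}$.

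Next, I would analyze simple chains under this decomposition. A step $\sigma_{j-1} \prec \sigma_j$ in a simple chain is multiplication by a non-trivial cycle lying below $\sigma$, hence supported on a single $S_{\alpha(j)}$. Extracting the distinct states from each factor shows that a simple chain of length $l$ in $[1_{\mathfrak{S}_n}, \sigma]$ ending at $\sigma$ is equivalent to the data of (a) simple chains $\Gamma^{(i)}$ of lengths $l_i$ in $[1_{\mathfrak{S}_{S_i}}, c_i]$ ending at $c_i$ with $\sum_i l_i = l$, and (b) an interleaving $\alpha : \{1, \ldots, l\} \to \{1, \ldots, r\}$ of step-indices with $|\alpha^{-1}(i)| = l_i$. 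The number of such interleavings is the multinomial coefficient $\binom{l}{l_1, \ldots, l_r}$, and the cycle length at each step (hence the factor $L\kappa_{d+1}$) is preserved by the decomposition.

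Finally, I would apply \eqref{taulogcum} to each $m_{k_i}(\mu)$. Via Biane's isomorphism (Lemma~\ref{Bia}) transferred to $[1_{\mathfrak{S}_{S_i}}, c_i]$, the sum becomes one over simple chains in that interval starting at $1$; then the order-reversing bijection $\tau \mapsto \tau^{-1} c_i$ of $[1_{\mathfrak{S}_{S_i}}, c_i]$, which preserves the simple-chain property and the cycle length of each step (by conjugation invariance in $\mathfrak{S}_{S_i}$), converts it into a sum over simple chains ending at $c_i$. Multiplying these expressions over $i$ and using the identity $\prod_i \frac{1}{l_i!} = \frac{1}{l!}\binom{l}{l_1, \ldots, l_r}$ collapses the product of sums into the single sum on the right-hand side of \eqref{taulogcumtrois}. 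The main obstacle is making precise the bijection in the second step between simple chains in $[1_{\mathfrak{S}_n}, \sigma]$ ending at $\sigma$ and pairs consisting of a tuple of factor-chains together with an interleaving, which requires careful tracking of how the cycle supports thread through the product decomposition.
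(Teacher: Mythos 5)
Your proposal is correct and follows essentially the same route as the paper: transfer \eqref{taulogcum} to $[1_{\mathfrak{S}_n},(1\cdots n)]$ via Biane's isomorphism, reverse via the Kreweras-type bijection $\tau\mapsto\tau^{-1}c$ to get chains ending at the cycle, use the lattice isomorphism $[1_{\mathfrak{S}_n},c]\cong[1_{\mathfrak{S}_{\sharp c}},(1\cdots\sharp c)]$, and then handle a general $\sigma$ by the product decomposition of $[1_{\mathfrak{S}_n},\sigma]$ along the cycles together with the shuffle/interleaving count, which is exactly the paper's argument (your multinomial identity being the paper's choice of a set partition of the step indices).
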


\begin{proof}The analogue formula of \eqref{taulogcum} for simple chains in $[1_{\mathfrak{S}_n},(1\cdots n)]$ is obtained via the isomorphism of Lemma~\ref{Bia}, remarking that, for a $l$-cycle $\sigma_1^{-1}\sigma_2$ of $[1_{\mathfrak{S}_n},(1\cdots n)]$, we have $l=n-\ell(\sigma_1^{-1}\sigma_2)+1=d(\sigma_1,\sigma_2)+1$. By consequence, we have
\begin{equation*}
m_n(\mu)=e^{n L\kappa_1(\mu)}\cdot\displaystyle\sum_{\substack{\Gamma \text{ simple chain in }[1_{\mathfrak{S}_n},(1\cdots n)]\\ \Gamma=(\sigma_0,\ldots,\sigma_{|\Gamma|}),\sigma_0=1}} \frac{1}{|\Gamma| !} \displaystyle\prod_{i=1}^{|\Gamma|} L\kappa_{d(\sigma_i,\sigma_{i-1})+1}\left(\mu \right).
\end{equation*}
Applying the Kreweras complementation $\sigma\mapsto \sigma^{-1}(1\cdots n)$ which is an isomorphism and preserves simple chains, we obtain
\begin{equation*}
m_n(\mu)=e^{n L\kappa_1(\mu)}\cdot\displaystyle\sum_{\substack{\Gamma \text{ simple chain in }[1_{\mathfrak{S}_n},(1\cdots n)]\\ \Gamma=(\sigma_0,\ldots,\sigma_{|\Gamma|}),\sigma_{|\Gamma|}=(1\cdots n)}} \frac{1}{|\Gamma| !} \displaystyle\prod_{i=1}^{|\Gamma|} L\kappa_{d(\sigma_i,\sigma_{i-1})+1}\left(\mu \right).
\end{equation*}
We now use the fact that for a cycle $c$ of length $\sharp c$, the segment $[1_{\mathfrak{S}_n},c]\subset \mathfrak{S}_n$ is isomorphic as a lattice to $[1_{\mathfrak{S}_{\sharp c}},(1\cdots \sharp c)]\subset \mathfrak{S}_{\sharp c}$, and by consequence, \eqref{taulogcumtrois} is true if $\sigma$ is a cycle.

For an arbitrary permutation $\sigma$, we decompose it into cycles $c_1,\ldots, c_{\ell(\sigma)}$. Constructing a simple chain of length $k$ ending at $\sigma$ is equivalent to constructing $\ell(\sigma)$ simple chains ending respectively at $c_1,\ldots, c_{\ell(\sigma)}$, whose lengths $l_1,\ldots, l_{\ell(\sigma)}$ add up to $k$, and shuffling the steps of these paths, that is choosing a sequence $(C_1,\ldots, C_{\ell(\sigma)})$ of subsets of $\{1,\ldots ,k\}$ which partition $\{1,\ldots ,k\}$ and whose cardinals are $l_1,\ldots, l_{\ell(\sigma)}$ respectively. Using the formula \eqref{taulogcumtrois} for cycles, this remark leads to \eqref{taulogcumtrois} for an arbitrary $\sigma\in \mathfrak{S}_n$.
\end{proof}

In order to conclude the proof of Proposition~\ref{taulogcumdeux}, it suffices to compute explicitly the free log-cumulants of a $\boxtimes$-infinitely divisible measure.


\begin{proposition}\label{loc}
Let $\mu\in \ID(\U,\boxtimes)$ with $\boxtimes$-characteristic triplet $(\omega, b,\upsilon)$. We have
\begin{enumerate}
\item $L\kappa_1(\mu)=\Log(\omega)-b/2+\int_\U \left( \Re(\zeta)-1\right) \diff \upsilon(\zeta)$,
\item $L\kappa_2(\mu)=-b+\int_\U (\zeta -1)^2\diff \upsilon(\zeta)$
\item and $L\kappa_n(\mu)=\int_\U (\zeta -1)^n\diff \upsilon(\zeta)$ for all $n\geq3$.
\end{enumerate} 
\end{proposition}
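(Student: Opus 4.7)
The strategy is to exploit the analytic characterisation
\[
LS_\mu(z)=-z\log(m_1(\mu)\cdot S_\mu(z))
\]
recalled just before the statement, and to plug in the explicit expression~\eqref{Sinf} of the $S$-transform of a $\boxtimes$-infinitely divisible measure. Thus only elementary manipulations of the integrand are required; no combinatorics on $NC(n)$ is needed.

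First I would compute $L\kappa_1(\mu)=\Log(m_1(\mu))$. Since $S_\mu(0)=1/m_1(\mu)$, specialising~\eqref{Sinf} at $z=0$ and using the identity $i\Im(\zeta)+(1-\zeta)=1-\Re(\zeta)$ gives
\[
S_\mu(0)=\omega^{-1}\exp\Bigl(\tfrac{b}{2}+\int_\U(1-\Re(\zeta))\diff\upsilon(\zeta)\Bigr),
\]
so that $L\kappa_1(\mu)=\Log(\omega)-b/2+\int_\U(\Re(\zeta)-1)\diff\upsilon(\zeta)$, which is item~(1).

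Next I would compute $m_1(\mu)\cdot S_\mu(z)$ for general $z$. Multiplying~\eqref{Sinf} by $m_1(\mu)$ and regrouping the integrand using
\[
i\Im(\zeta)+\frac{1-\zeta}{1+z(1-\zeta)}-(1-\Re(\zeta))=(\zeta-1)+\frac{1-\zeta}{1+z(1-\zeta)}=\frac{-z(1-\zeta)^2}{1+z(1-\zeta)},
\]
one obtains
\[
\log\bigl(m_1(\mu)S_\mu(z)\bigr)=bz-z\int_\U\frac{(1-\zeta)^2}{1+z(1-\zeta)}\diff\upsilon(\zeta),
\]
and hence
\[
LS_\mu(z)=-bz^2+z^2\int_\U\frac{(1-\zeta)^2}{1+z(1-\zeta)}\diff\upsilon(\zeta).
\]

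Finally I would extract the coefficients. Expanding $1/(1+z(1-\zeta))=\sum_{k\geq0}(-z(1-\zeta))^k$ and integrating term by term against $\upsilon$ (which is justified because $\upsilon$ is a Lévy measure on $\U$, hence $(1-\zeta)^2$ is $\upsilon$-integrable and higher powers are bounded by it), one finds
\[
LS_\mu(z)=-bz^2+\sum_{n\geq 2}z^n\int_\U(\zeta-1)^n\diff\upsilon(\zeta).
\]
Reading off the coefficient of $z^n$ yields $L\kappa_2(\mu)=-b+\int_\U(\zeta-1)^2\diff\upsilon(\zeta)$ and $L\kappa_n(\mu)=\int_\U(\zeta-1)^n\diff\upsilon(\zeta)$ for $n\geq 3$, proving items~(2) and~(3). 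The only genuine care needed is the algebraic identity used to combine the three terms of the integrand in $\log(m_1(\mu)S_\mu)$, and the integrability check to justify swapping sum and integral; everything else is bookkeeping.
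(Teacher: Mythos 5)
Your proposal is correct and follows essentially the same route as the paper: evaluate~\eqref{Sinf} at $z=0$ to get $m_1(\mu)=1/S_\mu(0)$ and hence $L\kappa_1(\mu)$, simplify $\log(m_1(\mu)S_\mu(z))$ to $bz-z\int_\U\frac{(1-\zeta)^2}{1+z(1-\zeta)}\diff\upsilon(\zeta)$, and read off the coefficients of $LS_\mu(z)=-z\log(m_1(\mu)S_\mu(z))$. The only differences are cosmetic bookkeeping in how the integrand is regrouped, plus your (welcome) explicit remark on integrability when expanding the geometric series.
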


\begin{proof}The data of $S_\mu(z)$ is given by~\eqref{Sinf}. We first remark that $$m_1(\mu)=S_\mu(0)^{-1}=\omega e^{-b/2-\int_\U \left( i \Im(\zeta)+1-\zeta\right) \diff \upsilon(\zeta)},$$ from which we deduce that $L\kappa_1(\mu)=\Log(m_1(\mu))=\Log(\omega)-b/2+\int_\U \left( \Re(\zeta)-1\right) \diff \upsilon(\zeta)$. We also have $m_1(\mu) S_\mu(z)=S_\mu(z)/S_\mu(0)=\exp\left(bz+\int_{\U}\frac{1-\zeta}{1+z(1-\zeta) }-(1-\zeta)\diff \upsilon (\zeta)\right)$. Therefore, $$LS_\mu(z)=-z\log (m_1(\mu)\cdot S_\mu(z))=-bz^2+\int_{\U}\frac{z^2(\zeta-1)^2}{1-z(\zeta-1)}\diff \upsilon (\zeta).$$
We identify $(L\kappa_n(\mu))_{n\geq 2}$ as the coefficients of $LS_\mu(z)=\sum_{n=2}^\infty L\kappa_n(\mu)z^n.$
\end{proof}

\section{Convolution semigroups on $U(N)$}\label{secfour}
In this section, we define and study the convolution semigroups on the unitary group $U(N)$. More precisely, we are interested in computing $\int_{U(N)}g^{\otimes n}\diff \mu(g)$ for $\mu$ arising from a convolution semigroup. In Proposition~\ref{momsemiu} and Proposition~\ref{momsemi}, we shall express this quantity in two different ways. The technique of proof is in the spirit of~\cite{LEVY2008}. It relies on a detailed comprehension of the generator of a semigroup of convolution on $U(N)$ (see~\cite{Liao2004}), and on the Schur-Weyl duality (see Section~\ref{SW}, and~\cite{Collins2003,Collins2004}).

Let $N\in \mathbb{N}$ and let $M_N(\mathbb{C})$ be the space of matrices of dimension $N$
. If $M\in M_N(\mathbb{C})$, we denote by 
$M^*$ the adjoint of $M$. Let us denote by $\Tr: M_N(\mathbb{C})\rightarrow \mathbb{C}$ the usual trace.
The identity matrix is denoted by $I_N$. We consider the unitary group $$U(N)=\{U\in M_N(\mathbb{C}):U^*U=I_N\}.$$
 The $\circledast$-convolution  of two probability measures $\mu$ and $\nu$ on $U(N)$ is defined to be the unique probability measure $\mu\circledast \nu$ on $U(N)$ such that $\int_{U(N)} f \diff (\mu\circledast \nu)=\int_{U(N)} f(gh) \ \mu(\diff g) \nu(\diff h)$ for all bounded Borel function $f$ on $U(N)$. Let us denote by $\ID(U(N),\circledast)$ the space of infinitely divisible probability measures on $U(N)$ and by $\ID_\inv(U(N),\circledast)$ the subspace of measures $\mu$ in $\ID(U(N),\circledast)$ which are invariant by unitary conjugation, that is, such that for all bounded Borel function $f$ on $U(N)$ and all $g\in U(N)$, we have
$$\int_{U(N)}f\diff \mu=\int_{U(N)}f(ghg^*)\diff \mu(h) .$$

\subsection{Generators of semigroups}Let $\mu=(\mu_t)_{t\in \mathbb{R}^+}$ be a weakly continuous semigroup of convolution on $U(N)$ starting at $\mu_0=\delta_e$. We define the  transition  semigroup $(P_t)_{t\in \mathbb{R}^+}$ as follows: for all $t\in \mathbb{R}^+$, all bounded Borel function $f$ on $U(N)$ and all $h\in U(N)$, we set $P_tf(h)=\int_{U(N)}f(hg)\mu_t(\diff g)$. The \textit{generator} of $\mu$, is defined to be the linear operator $L$ on $C(U(N))$ such as $Lf=\lim_{t\rightarrow 0}(P_tf-f)/t$ whenever this limit exists.

In order to describe the generator of a semigroup, we shall successively  introduce in the three next paragraphs the Lie algebra $\frak{u}(N)$ of $U(N)$, a scalar product on $\frak{u}(N)$ and the notion of Lévy measure on $U(N)$.

 The unitary group $U(N)$ is a compact real Lie group of dimension $N^2$, whose Lie algebra $\frak{u}(N)$ is the real vector space of skew-Hermitian matrices: $\frak{u}(N)=\{M\in M_N(\mathbb{C}): M^*+M=0\}.$ We consider also the special unitary group $SU(N)=\{U\in U(N):\det U=1\}$, whose Lie algebra is $\frak{su}(N)=\{M\in \frak{u}(N): \Tr(U)=0\}.$ We remark that $\frak{u}(N)=\frak{su}(N)\oplus(i \mathbb{R} I_N)$. Any $Y\in \frak{u}(N)$ induces a \textit{left invariant vector field }$Y^l$ on $U(N)$ defined for all $g\in U(N)$ by $Y^l(g)=DL_g(Y)$ where $DL_g$ is the differential map of $h\mapsto gh$.

We consider the following \textit{inner product} on $\frak{u}(N)$:
$$(X,Y) \mapsto \left\langle X,Y\right\rangle_{\frak{u}(N)}= \Tr (X^*Y)=-\Tr (XY).$$
It is a real scalar product on $\frak{u}(N)$ which is invariant by unitary conjugation, and its restriction to $\frak{su}(N)$ is also a real scalar product which is invariant by unitary conjugation. Let us fix an orthonormal basis $\left\{Y_1,\ldots,Y_{N^2-1}\right\}$ of $\frak{su}(N)$ and set $Y_{N^2}=\frac{i}{\sqrt{N}}I_N$. This way, $\left\{Y_1,\ldots,Y_{N^2}\right\}$ is an orthonormal basis of $\frak{u}(N)$.\label{bloubi}

It is convenient now to introduce an arbitrary auxiliary set of local coordinates around $I_N$. Let $\Re,\Im:U(N)\rightarrow M_N(\mathbb{C})$ be such that for all $U\in U(N)$, we have $\Re (U)=(U+U^*)/2$ and $\Im (U)=(U-U^*)/2i$. Note that $i \Im$ takes its values in $\frak{u}(N)$. A \textit{Lévy measure} $\Pi$ on $U(N)$ is a measure on $U(N)$ such that $\Pi(\{I_N\})=0$, for all neighborhood $V$ of $I_N$, we have $\Pi(V^c)<+\infty$ and $\int_{U(N)} \|i \Im(x)\|_{\frak{u}(N)}^2\ \Pi(\diff x)<\infty$.

The following theorem gives us a characterization of the generator of such semigroups.
\begin{theorem}[\cite{Applebaum1993,Liao2004}]Let $\mu=(\mu_t)_{t\in \mathbb{R}^+}$ be a weakly continuous semigroup of convolution on $U(N)$ starting at $\mu_0=\delta_e$.
There exist an element $Y_0\in \frak{u}(N)$, a symmetric positive semidefinite matrix $(y_{i,j})_{1\leq i,j \leq N^2}$ and a Lévy measure $\Pi$ on $U(N)$
such that the generator $L$ of $\mu$ is the left-invariant differential operator given, for all $f\in C^2(U(N))$ and all $h\in U(N)$, by 
\begin{equation}
Lf(h)=Y_0^lf(h)+\frac{1}{2}\displaystyle\sum_{i,j=1}^{N^2}y_{i,j}Y_i^l Y_j^l f(h) +\int_{U(N)}f(hg)-f(h)- \left( i \Im(g)\right)^l f(h)\ \Pi(\diff g).\label{gen}
\end{equation}
Conversely, given such a triplet $(Y_0, (y_{i,j})_{1\leq i,j \leq N^2}, \Pi)$, it exists a unique weakly continuous semigroup of convolution on $U(N)$ starting at $\delta_e$ whose generator is given by \eqref{gen}.
\end{theorem}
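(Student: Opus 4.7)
The statement is Hunt's theorem adapted to $U(N)$, and my plan would follow the strategy of~\cite{Liao2004}. First I would promote the convolution semigroup $(\mu_t)_{t\geq 0}$ to a Feller semigroup on $C(U(N))$ by setting $P_t f(h)=\int f(hg)\,\diff\mu_t(g)$; positivity, the contraction property, and strong continuity on $C(U(N))$ all follow from $U(N)$'s compactness and the weak continuity of $(\mu_t)$. By Hille--Yosida there is a closed densely defined generator $L$, and the algebra $C^\infty(U(N))$ lies in its domain because the left translations commute with the left-invariant differentiations $Y^l$.

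The core analytic step is the construction of the triplet. Fix a neighborhood $V$ of $I_N$ on which $g\mapsto X(g):=\log g$ is smooth and bijective onto an open set of $\frak{u}(N)$; then $X(g)=i\Im(g)+O(\|i\Im(g)\|^2)$. For any $\phi\in C(U(N))$ vanishing on $V$, the positive functional $\phi\mapsto \lim_{t\downarrow 0} t^{-1}\int \phi\,\diff\mu_t$ exists and is finite, because $\phi$ can be written as $Lf_\phi$ for some $f_\phi\in C^2$, using that $\mu_t(V^c)=O(t)$ (a standard consequence of the semigroup property plus tightness of $(t^{-1}\mu_t|_{V^c})_{t>0}$). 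Extending this limit gives a Radon measure $\Pi$ on $U(N)\setminus\{I_N\}$. The integrability $\int \|i\Im(g)\|_{\frak{u}(N)}^2\,\diff\Pi(g)<\infty$ I would obtain by testing the same limit on the $C^2$ function $g\mapsto \|i\Im(g)\|^2$ truncated away from a small neighborhood of $I_N$, and using that $t^{-1}\int \|i\Im(g)\|^2\,\diff\mu_t(g)$ is uniformly bounded for small $t$ (this is the key Lévy-measure estimate).

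Once $\Pi$ is available, I would define the compensated jump part $Jf(h):=\int_{U(N)} \bigl[f(hg)-f(h)-(i\Im(g))^l f(h)\bigr]\,\diff\Pi(g)$, which converges absolutely for $f\in C^2(U(N))$ thanks to a second-order Taylor expansion $f(hg)-f(h)-(i\Im(g))^l f(h)=O(\|i\Im(g)\|^2)$ near $I_N$ combined with the integrability of $\Pi$. The remaining operator $Lf-Jf$ is a continuous local operator on $C^2(U(N))$, hence by Peetre's theorem a second-order left-invariant differential operator; writing it in the basis $\{Y_1,\dots,Y_{N^2}\}$ of Section~\ref{bloubi} yields $Y_0^l+\tfrac{1}{2}\sum y_{i,j}Y_i^l Y_j^l$. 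The matrix $(y_{i,j})$ is symmetric and positive semidefinite because $f\mapsto Lf(e)-Jf(e)$, restricted to functions with vanishing first derivative and vanishing value at $e$, is a positive functional on squares; this follows from the positivity of the semigroup $(P_t)$ and the fact that the jump part has already been subtracted.

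For the converse I would take $(Y_0,(y_{i,j}),\Pi)$, write $(y_{i,j})=A^tA$, and construct the left Lévy process $(g_t)$ on $U(N)$ as the solution of the Marcus/Stratonovich stochastic differential equation driven by a Brownian motion in $\frak{u}(N)$ with covariance $(y_{i,j})$, a drift $Y_0$, and a compensated Poisson random measure on $U(N)$ with intensity $\Pi$; the distribution of $g_t$ gives the desired semigroup $\mu_t$, and Itô's formula shows its generator is exactly~\eqref{gen}. Uniqueness comes from the fact that~\eqref{gen} already determines $L$ on the core $C^2(U(N))$, and Hille--Yosida then determines $(P_t)$, hence $(\mu_t)$. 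The main obstacle is the Lévy-measure construction and the $\int\|i\Im(g)\|^2\,\diff\Pi<\infty$ estimate; everything else is either a structural argument (Peetre, Hille--Yosida) or a standard SDE construction.
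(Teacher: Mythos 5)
The paper does not prove this statement at all: it is imported verbatim from the cited sources (Applebaum--Kunita~\cite{Applebaum1993} and Liao~\cite{Liao2004}), so the only meaningful comparison is with the proofs there. Your outline follows the same Hunt-type strategy as those references (Feller semigroup and Hille--Yosida, construction of the L\'evy measure from small-time asymptotics of $\mu_t$ away from $I_N$, separation of a compensated jump part, identification of the remaining local part as a second-order left-invariant operator, and the converse via the Marcus/Stratonovich SDE construction, which is exactly the Applebaum--Kunita route), with a Courr\`ege/Peetre variant for the local part that is workable.

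There is, however, a genuine gap at the step you yourself flag as the crux. As written, the existence of $\lim_{t\downarrow 0}t^{-1}\int\phi\,\diff\mu_t$ for $\phi$ vanishing near $I_N$ is justified by ``$\phi$ can be written as $Lf_\phi$ for some $f_\phi\in C^2$'', and this does not stand: at that stage you have not shown $C^2(U(N))\subset D(L)$ (left-invariance of the translations gives no small-time control, so your earlier assertion that smooth functions lie in the domain is itself unproved), and an arbitrary continuous function vanishing near $I_N$ need not lie in $L(C^2)$ in any case; moreover, tightness of $(t^{-1}\mu_t|_{V^c})_{t>0}$ only produces vague subsequential limits, not the limit. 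The standard repair, which is Hunt's argument as reproduced in \cite{Liao2004} and \cite{Applebaum1993}, is to first establish the uniform bound $\sup_{0<t\le 1}t^{-1}\int\varphi\,\diff\mu_t<\infty$ for a Hunt function $\varphi$ (smooth, vanishing to second order at $I_N$, comparable to $\|i\Im(g)\|_{\frak{u}(N)}^2$ near $I_N$, bounded below on $V^c$), using functions in the domain supplied by the resolvent $R_\lambda C(U(N))$; this simultaneously yields $\mu_t(V^c)=O(t)$, the estimate $\int\|i\Im(g)\|_{\frak{u}(N)}^2\,\diff\Pi(g)<\infty$, the inclusion $C^2(U(N))\subset D(L)$, and the identification of every vague limit point of $t^{-1}\mu_t$ away from $I_N$, so that $\Pi$ is well defined. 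The same resolvent argument is also what makes $C^2(U(N))$ a core, which your uniqueness claim in the converse quietly relies on; once these points are in place, the rest of your plan (positivity of $(y_{i,j})$ from the maximum principle, the SDE construction, It\^o's formula) is sound and matches the cited proofs.
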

The triplet $(Y_0, (y_{i,j})_{1\leq i,j \leq N^2}, \Pi)$ is called the \textit{characteristic triplet} of $(\mu_t)_{t\in \mathbb{R}^+}$, or of $L$. Let $\mu\in \ID(U(N),\circledast)$ be such that it exists a weakly continuous semigroup of convolution $(\mu_t)_{t\in \mathbb{R}^+}$ with $\mu_1=\mu$ and $\mu_0=\delta_e$. In this case, we say that the characteristic triplet of $(\mu_t)_{t\in \mathbb{R}^+}$ is \textit{a characteristic triplet} of $\mu$. It is not unique but it completely characterizes the measure $\mu$. Conversely, every triplet of this form is a characteristic triplet of a unique measure in  $\ID(U(N),\circledast)$.

 \subsection{Expected values of polynomials of the entries}Let $n\in \N^*$. In this section, we give a formula for $\int_{U(N)}g^{\otimes n}\diff \mu(g)$ when $\mu$ arises from a convolution semigroup.
Consider the representation $\rho^n_{U(N)}$ of $ U(N)$ on $(\mathbb{C}^N)^{\otimes n}$ given by
$$\rho^n_{U(N)}(g)=\underbrace{g\otimes\cdots \otimes g}_{n \text{ times}}.$$
We set $d\rho^n_{U(N)}(L)=L(\rho^n_{U(N)})(I_N)$, where $\rho^n_{U(N)}$ is seen as an element of $C^2(U(N))\otimes \End((\mathbb{C}^N)^{\otimes n})$.

\begin{proposition}Let $(\mu_t)_{t\in \mathbb{R}^+}$ be a weakly continuous semigroup of convolution on $U(N)$ starting at $\mu_0=\delta_e$ with generator $L$ and characteristic triplet $(Y_0, (y_{i,j})_{1\leq i,j \leq N^2}, \Pi)$. For all $t\in \R_+$, we have the equality in $\End((\mathbb{C}^N)^{\otimes n})$\label{momsemiu}
$$\int_{U(N)}g^{\otimes n}\diff \mu_t(g)=\exp(t\ d\rho^n_{U(N)}(L))$$
with
\begin{eqnarray*}
d\rho^n_{U(N)}(L)&=& \displaystyle\sum_{1\leq k \leq n}\Id_N^{\otimes k-1}\otimes Y_0\otimes \Id_N^{\otimes n-k}\\
&&+\frac{1}{2}\displaystyle\sum_{i,j=1}^{N^2}y_{i,j}\cdot \displaystyle\sum_{1\leq k,l \leq n}\left(\Id_N^{\otimes k-1}\otimes Y_i\otimes \Id_N^{\otimes n-k}\right)\circ\left(\Id_N^{\otimes l-1}\otimes Y_j\otimes \Id_N^{\otimes n-l}\right)\\
&&+\int_{U(N)}\left(g^{\otimes n}-\Id_N^{\otimes n}-\displaystyle\sum_{1\leq k \leq n}\Id_N^{\otimes k-1}\otimes  i \Im(g)\otimes \Id_N^{\otimes n-k}\right) \ \Pi(\diff g).
\end{eqnarray*}

\end{proposition}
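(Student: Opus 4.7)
The plan is to exploit the multiplicativity of the representation $\rho^n_{U(N)}:g\mapsto g^{\otimes n}$ to reduce the statement to an ODE in the finite-dimensional algebra $\End((\C^N)^{\otimes n})$, and then to identify the generator by applying $L$ entrywise to $\rho^n_{U(N)}$.

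First, set $M(t)=\int_{U(N)}g^{\otimes n}\diff\mu_t(g)\in \End((\C^N)^{\otimes n})$. Using Fubini and the fact that $\rho^n_{U(N)}(gh)=\rho^n_{U(N)}(g)\rho^n_{U(N)}(h)$, the semigroup property $\mu_{t+s}=\mu_t\circledast \mu_s$ gives
\[
M(t+s)=\int\!\!\int (gh)^{\otimes n}\,\diff\mu_t(g)\diff\mu_s(h)=M(t)M(s),
\]
with $M(0)=\Id_N^{\otimes n}$. Since $t\mapsto M(t)$ is weakly continuous by weak continuity of $(\mu_t)$, and takes values in the finite-dimensional algebra $\End((\C^N)^{\otimes n})$, it is actually norm-continuous, and the standard argument for continuous one-parameter multiplicative semigroups in a matrix algebra shows that $M(t)=\exp(tA)$ where $A=\left.\tfrac{d}{dt}\right|_{t=0}M(t)$. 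By definition of $L$ and because every matrix coefficient of $\rho^n_{U(N)}$ is smooth, the generator $L$ applies entrywise and $A=d\rho^n_{U(N)}(L):=L(\rho^n_{U(N)})(I_N)$.

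It remains to plug the characteristic triplet formula~\eqref{gen} into $d\rho^n_{U(N)}(L)$. The key computation is that for any $Y\in \mathfrak{u}(N)$,
\[
Y^l \rho^n_{U(N)}(h)=\left.\frac{d}{dt}\right|_{t=0}(he^{tY})^{\otimes n}=\rho^n_{U(N)}(h)\cdot \sum_{k=1}^{n}\Id_N^{\otimes k-1}\otimes Y\otimes \Id_N^{\otimes n-k},
\]
by the Leibniz rule applied to the $n$-fold tensor product. This yields the first-order term when evaluated at $h=I_N$. Iterating the same computation gives $Y_i^l Y_j^l \rho^n_{U(N)}(h)=\rho^n_{U(N)}(h)\cdot d\rho^n_{U(N)}(Y_i)\, d\rho^n_{U(N)}(Y_j)$ with $d\rho^n_{U(N)}(Y):=\sum_k \Id_N^{\otimes k-1}\otimes Y\otimes \Id_N^{\otimes n-k}$, and the second-order term follows. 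For the jump part, applying the integrand of~\eqref{gen} to the function $\rho^n_{U(N)}$ and evaluating at $I_N$ gives pointwise the $\End((\C^N)^{\otimes n})$-valued integrand appearing in the statement.

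The only subtle step is to justify swapping the integral over $\Pi$ with the evaluation of $L$ on the matrix-valued function $\rho^n_{U(N)}$ and to verify that the resulting vector-valued integral converges. This follows from a Taylor estimate near $I_N$: writing $g=\exp(iX)$ locally with $X$ Hermitian and small, one checks that
\[
g^{\otimes n}-\Id_N^{\otimes n}-\sum_{k=1}^n \Id_N^{\otimes k-1}\otimes i\Im(g)\otimes \Id_N^{\otimes n-k} = O\bigl(\|i\Im(g)\|_{\mathfrak{u}(N)}^2\bigr)
\]
uniformly in a neighbourhood of $I_N$, since the first-order Taylor expansion of $g^{\otimes n}$ at $I_N$ in the direction $iX$ is exactly $d\rho^n_{U(N)}(iX)$, and $iX$ and $i\Im(g)$ agree to first order. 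Combined with the defining integrability property $\int \|i\Im(g)\|_{\mathfrak{u}(N)}^2\,\Pi(\diff g)<\infty$ of a Lévy measure on $U(N)$ and the fact that $\Pi$ is finite off any neighborhood of $I_N$, this shows the matrix-valued integral converges absolutely in operator norm and justifies the interchange. This estimate is the only non-routine point; everything else is a direct unwinding of the definitions.
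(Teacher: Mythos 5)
Your proposal is correct and follows essentially the same route as the paper: establish $M(t+s)=M(t)M(s)$ from the convolution semigroup property, conclude $M(t)=\exp(t\,d\rho^n_{U(N)}(L))$ (the paper phrases this as uniqueness for the ODE $y'=y\cdot d\rho^n_{U(N)}(L)$, $y(0)=I_N^{\otimes n}$), and compute $d\rho^n_{U(N)}(L)$ by applying the generator~\eqref{gen} to $g\mapsto g^{\otimes n}$ via the left-invariant derivative formula $Y^l(U^{\otimes n})=U^{\otimes n}\cdot\sum_k \Id_N^{\otimes k-1}\otimes Y\otimes \Id_N^{\otimes n-k}$. Your extra paragraph justifying absolute convergence of the $\Pi$-integral is a welcome precision that the paper only supplies later, in the proof of Proposition~\ref{lulu}.
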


%

\begin{proof}Let denote by $U:U(N)\rightarrow M_N(\mathbb{C})$ the identity function of $U(N)$. We compute
\begin{eqnarray*}L\left(\rho^n_{U(N)}\right)&=&Y_0^l(U^{\otimes n})+\frac{1}{2}\displaystyle\sum_{i,j=1}^{N^2}y_{i,j}Y_i^l Y_j^l(U^{\otimes n})\\
&&+\int_{U(N)}(U g)^{\otimes n}-U^{\otimes n}- \left( i \Im(g)\right)^l (U^{\otimes n})\ \Pi(\diff g)
\end{eqnarray*}
and using that, for all $Y\in \mathfrak{u}(N)$, we have $Y^l(U^{\otimes n})=U^{\otimes n} \cdot \displaystyle\sum_{1\leq k \leq n}\Id_N^{\otimes k-1}\otimes Y\otimes \Id_N^{\otimes n-k}$,
\begin{eqnarray*}L\left(\rho^n_{U(N)}\right)&=& U^{\otimes n}\cdot \displaystyle\sum_{1\leq k \leq n}\Id_N^{\otimes k-1}\otimes Y_0\otimes \Id_N^{\otimes n-k}\\
&&+\frac{1}{2}U^{\otimes n}\cdot\displaystyle\sum_{i,j=1}^{N^2}y_{i,j} \displaystyle\sum_{1\leq k,l \leq n}\left(\Id_N^{\otimes k-1}\otimes Y_i\otimes \Id_N^{\otimes n-k}\right)\cdot\left(\Id_N^{\otimes l-1}\otimes Y_j\otimes \Id_N^{\otimes n-l}\right)\\
&&+U^{\otimes n}\cdot \int_{U(N)}\left(g^{\otimes n}-\Id_N^{\otimes n}-\displaystyle\sum_{1\leq k \leq n}\Id_N^{\otimes k-1}\otimes  i \Im(g)\otimes \Id_N^{\otimes n-k}\right) \ \Pi(\diff g).
\end{eqnarray*}
Hence, $d\rho^n_{U(N)}(L)=L(\rho^n_{U(N)})(e)$ leads to the expression of $d\rho^n_{U(N)}(L)$ given above. We conclude by remarking that $t\to \int_{U(N)}g^{\otimes n}\diff \mu_t(g)=\int_{U(N)}\rho^n_{U(N)}( g)\diff \mu_t(g)$ and $t\to \exp(t\ d\rho^n_{U(N)}(L))$ are both the unique solution to the differential equation
\begin{equation*}
\left\{
\begin{array}{r c l}
    y(0) &=& I_N^{\otimes n}, \\
    y' &=& y\cdot d\rho^n_{U(N)}(L). 
\end{array}
\right.\qedhere\end{equation*}
\end{proof}

We now give an alternative expression of $d\rho^n_{U(N)}(L)$. Let $m\geq 0$. For all $1\leq k_1<\ldots < k_m\leq n$, let us denote by $\iota_{k_1,\ldots,k_m}^{M_N(\mathbb{C})^{\otimes n}}: M_N(\mathbb{C})^{\otimes m}\rightarrow  M_N(\mathbb{C})^{\otimes n}$ (or more simply $\iota_{k_1,\ldots,k_m})$ the mapping defined by
$$\iota_{k_1,\ldots,k_m}(X_1\otimes \cdots \otimes X_m)=I_N^{\otimes k_1-1}\otimes X_1 \otimes I_N^{\otimes k_2-k_1-1}\otimes X_2\otimes \cdots \otimes X_m\otimes I_N^{\otimes n-k_m},$$
that is to say in words that $\iota_{k_1,\ldots,k_m}(X_1\otimes \cdots \otimes X_m)$ is the tensor product of $X_1,\ldots,X_m$ at the places $k_1,\ldots,k_m$ and $I_N$ at the other places.

\begin{proposition}Let $L$ be a generator with characteristic triplet $(Y_0, (y_{i,j})_{1\leq i,j \leq N^2}, \Pi)$. We have\label{lulu}
\begin{eqnarray*}
d\rho^n_{U(N)}(L)&=& \displaystyle\sum_{1\leq k \leq n}\iota_{k}\left(Y_0+\int_{U(N)}(\Re(g)-I_N)\Pi(\diff g)\right)\\
&&+\frac{1}{2}\displaystyle\sum_{i,j=1}^{N^2}y_{i,j}\cdot \displaystyle\sum_{1\leq k,l \leq n}\iota_{k}( Y_i)\circ \iota_{l}( Y_j)\\
&&+\displaystyle\sum_{\substack{2\leq m \leq n\\1\leq k_1<\ldots<k_m \leq n}}\iota_{k_1,\ldots,k_m}\left(\int_{U(N)} (g-I_N)^{\otimes m}\Pi(\diff g)\right).
\end{eqnarray*}

\end{proposition}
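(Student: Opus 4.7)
The plan is to start from the expression for $d\rho^n_{U(N)}(L)$ given by Proposition~\ref{momsemiu} and transform only the integral (jump) term; the drift term and the Brownian term already match the desired form, with the drift term being the one that will absorb a correction from the jumps. The key identity to exploit is the binomial-style expansion
\[
g^{\otimes n}=\bigl(I_N+(g-I_N)\bigr)^{\otimes n}=\sum_{S\subseteq\{1,\ldots,n\}}\iota_S\!\left((g-I_N)^{\otimes |S|}\right),
\]
where for $S=\{k_1<\cdots<k_m\}$ the tensor $(g-I_N)^{\otimes m}$ is placed at the positions $k_1,\ldots,k_m$ and $I_N$ at the remaining ones.

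Using this expansion, I would write
\[
g^{\otimes n}-I_N^{\otimes n}=\sum_{1\le k\le n}\iota_k(g-I_N)+\sum_{m=2}^{n}\sum_{1\le k_1<\cdots<k_m\le n}\iota_{k_1,\ldots,k_m}\!\left((g-I_N)^{\otimes m}\right).
\]
Since $g=\Re(g)+i\Im(g)$, the decomposition $g-I_N=(\Re(g)-I_N)+i\Im(g)$ yields $\iota_k(g-I_N)=\iota_k(\Re(g)-I_N)+\iota_k(i\Im(g))$. The linear term in $i\Im(g)$ therefore cancels exactly the compensator $-\sum_k\iota_k(i\Im(g))$ appearing in Proposition~\ref{momsemiu}, and what remains is $\sum_k\iota_k(\Re(g)-I_N)$. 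Integrating against $\Pi$ and combining with the $Y_0$ drift produces the drift $Y_0+\int_{U(N)}(\Re(g)-I_N)\,\Pi(dg)$ at each slot $k$, while the terms with $m\ge 2$ give exactly the multi-site jump contribution of the statement.

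The main obstacle, and the only nontrivial point, is to justify that every integral obtained along the way actually converges. This reduces to local estimates near $I_N$: writing $g=e^{iH}$ with $H\in\mathfrak{u}(N)$ small, one has $i\Im(g)=iH+O(\|H\|^3)$ while $\Re(g)-I_N=-\tfrac12 H^2+O(\|H\|^4)$, so near the identity both $\|\Re(g)-I_N\|$ and $\|(g-I_N)^{\otimes m}\|$ for $m\ge 2$ are $O(\|i\Im(g)\|^2)$, which is $\Pi$-integrable by the defining property of a Lévy measure on $U(N)$; away from a neighborhood of $I_N$, $\Pi$ has finite mass and everything is uniformly bounded. This integrability argument lets one split the single integral in Proposition~\ref{momsemiu} into the sum of convergent pieces that appear in the statement and conclude.
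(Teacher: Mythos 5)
Your proposal is correct and follows essentially the same route as the paper: starting from Proposition~\ref{momsemiu}, expanding $g^{\otimes n}=(I_N+(g-I_N))^{\otimes n}$ over subsets of positions, recombining the $m=1$ terms with the compensator via $g-I_N-i\Im(g)=\Re(g)-I_N$, and justifying the splitting of the integral by the fact that each integrand is $O(\|i\Im(g)\|_{\frak{u}(N)}^2)$ near $I_N$ and $\Pi$ has finite mass away from it. Your local estimates via $g=e^{iH}$ merely spell out the integrability argument that the paper states in one line.
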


\begin{proof}Our starting point is the expression of $d\rho^n_{U(N)}(L)$ given by Proposition~\ref{momsemi}. Let us remark that
\begin{align*}g^{\otimes n}&=(g-I_N+I_N)^{\otimes n}\\
&=I_N^{\otimes n}+\displaystyle\sum_{\substack{1\leq m \leq n\\1\leq k_1<\ldots<k_m \leq n}}\iota_{k_1,\ldots,k_m}((g-I_N)^{\otimes m}),
\end{align*}
from which we deduce that
\begin{eqnarray*}
&&\int_{U(N)}\left(g^{\otimes n}-\Id_N^{\otimes n}-\displaystyle\sum_{1\leq k \leq n}\Id_N^{\otimes k-1}\otimes  i \Im(g)\otimes \Id_N^{\otimes n-k}\right) \ \Pi(\diff g)\\
&=&\int_{U(N)}\left( \displaystyle\sum_{1\leq k \leq n} \iota_{k}(g-I_N-i\Im(g)) +\displaystyle\sum_{\substack{2\leq m \leq n\\1\leq k_1<\ldots<k_m \leq n}} \iota_{k_1,\ldots,k_m}((g-I_N)^{\otimes m})\right) \ \Pi(\diff g)\\
&=& \displaystyle\sum_{1\leq k \leq n} \iota_{k}\left(\int_{U(N)}(\Re(g)-I_N)\Pi(\diff g)\right) +\displaystyle\sum_{\substack{2\leq m \leq n\\1\leq k_1<\ldots<k_m \leq n}}\iota_{k_1,\ldots,k_m}\left(\int_{U(N)} (g-I_N)^{\otimes m}\Pi(\diff g)\right)
\end{eqnarray*}
because all the integrand are equivalent to $\|i \Im(g)\|_{\frak{u}(N)}^2$ in a neighborhood of $I_N$ and hence integrable with respect to $\Pi$. Replacing the last term by this new expression in Proposition~\ref{momsemiu} yields to the result.
\end{proof}

\subsection{Conjugate invariant semigroups on $U(N)$}

A weakly continuous  convolution semigroup $(\mu_t)_{t\in \mathbb{R}^+}$ on $U(N)$ starting at $\mu_0=\delta_e$ is said \textit{conjugate invariant} if all $\mu_t$ belong to $\ID_\inv(U(N),\circledast)$.

\begin{proposition}
Let $(\mu_t)_{t\in \mathbb{R}^+}$ be a weakly continuous  convolution semigroup starting at $\mu_0=\delta_e$ which is conjugate invariant. Let $(Y_0, (y_{i,j})_{1\leq i,j \leq N}, \Pi)$ be its characteristic triplet. The differential operator $\frac{1}{2}\sum_{i,j=1}^{N^2}y_{i,j}Y_i^l Y_j^l$ and the measure $\Pi$ are both conjugate invariant. Moreover, there exists three constants $y_0, \alpha$ and  $\beta\in \mathbb{R}$ such that $Y_0=i y_0I_N$ and $$(y_{i,j})_{1\leq i,j \leq N^2}=\left(\begin{array}{cccc}\alpha &  &  & 0 \\ & \ddots &  &  \\ &  & \alpha &  \\0 &  &  & \beta\end{array}\right).$$

\end{proposition}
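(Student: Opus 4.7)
The plan is to exploit the conjugate invariance of $(\mu_t)_{t\in\R_+}$ to force an equivariance of the generator $L$ under the conjugation action $c_{g_0}:h\mapsto g_0hg_0^*$ of $U(N)$, and then to read off the constraints this imposes on the characteristic triplet. Invariance of each $\mu_t$ under $c_{g_0}$ is equivalent to $P_t$ commuting with the pullback $c_{g_0}^*:f\mapsto f\circ c_{g_0}$; this commutativity passes to the generator, so $L(f\circ c_{g_0}) = (Lf)\circ c_{g_0}$ for every $g_0\in U(N)$ and every $f\in C^2(U(N))$.

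The key computational identity is
$$Y^l(f\circ c_{g_0}) = \bigl((\mathrm{Ad}(g_0)Y)^l f\bigr)\circ c_{g_0}, \qquad Y\in\mathfrak{u}(N),$$
which follows from $g_0 h e^{tY} g_0^* = c_{g_0}(h)\,e^{t\,\mathrm{Ad}(g_0)Y}$. Applied termwise to the three pieces of~\eqref{gen}, together with the change of variable $g'=c_{g_0}(g)$ in the jump integral---for which one uses $\mathrm{Ad}(g_0)(i\Im g)=i\Im(c_{g_0}(g))$, so the compensator $(i\Im g)^l f(h)$ transforms consistently---this yields $L(f\circ c_{g_0})(h)=(\widetilde L f)(c_{g_0}(h))$, where $\widetilde L$ is the generator of the form~\eqref{gen} with triplet
$$\bigl(\mathrm{Ad}(g_0)Y_0,\ a(g_0)(y_{i,j})a(g_0)^\top,\ (c_{g_0})_*\Pi\bigr).$$
Here $a(g_0)\in O(N^2)$ is the matrix of $\mathrm{Ad}(g_0)$ in the orthonormal basis $\{Y_1,\dots,Y_{N^2}\}$, the orthogonality following from the $\mathrm{Ad}$-invariance of $\langle\cdot,\cdot\rangle_{\mathfrak{u}(N)}$. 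The equality $\widetilde L = L$, combined with the uniqueness of the characteristic triplet of a given generator, gives at once the three invariances $\mathrm{Ad}(g_0)Y_0=Y_0$, $a(g_0)(y_{i,j})a(g_0)^\top=(y_{i,j})$ and $(c_{g_0})_*\Pi=\Pi$ for every $g_0\in U(N)$, which establishes the first assertion of the proposition.

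To extract the explicit form of $Y_0$ and $(y_{i,j})$, recall that $\mathrm{Ad}$ fixes the center $i\R I_N$ pointwise and acts irreducibly on $\mathfrak{su}(N)$; with respect to the basis $\{Y_1,\ldots,Y_{N^2}\}$ chosen above, $a(g_0)$ is therefore block-diagonal with blocks $\tilde a(g_0)\in O(N^2-1)$ and $1$. Invariance of $Y_0$ under $\mathrm{Ad}$ places it in the center, so $Y_0=iy_0I_N$ for some $y_0\in\R$. Writing $(y_{i,j})$ in the corresponding $2\times 2$ block form, the rectangular off-diagonal blocks are invariant vectors for all $\tilde a(g_0)$ and hence vanish by irreducibility, while the upper $(N^2-1)\times(N^2-1)$ block commutes with every $\tilde a(g_0)$ and is therefore a scalar multiple $\alpha I$ by Schur's lemma; the final diagonal entry is some $\beta\in\R$. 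The most delicate step of the plan is the careful bookkeeping in the transformation of the jump integral---in particular that the compensator $(i\Im g)^l f(h)$ pairs correctly with the pushforward $(c_{g_0})_*\Pi$---together with invoking the uniqueness of the characteristic triplet of a generator (a standard Lévy--Khintchine type uniqueness); once these are granted, the conclusion reduces to block decomposition and Schur's lemma.
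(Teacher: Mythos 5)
Your proposal is correct and follows essentially the same route as the paper: the paper obtains the conjugate invariance of the diffusion part, the Lévy measure and the drift by citing Liao and the proof of Proposition~4.2.2 of Lévy's work (using that $i\Im$ is conjugate invariant), and then concludes with the same decomposition $\frak{u}(N)=\frak{su}(N)\oplus i\R I_N$ and simplicity/Schur-type argument for the covariance matrix. The only difference is that you prove directly, via equivariance of the generator and uniqueness of the Hunt triplet, what the paper delegates to those references.
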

\begin{proof}
Thanks to~\cite{Liao2004}, if we denote by $(Y_0, (y_{i,j})_{1\leq i,j \leq N^2}, \Pi)$ the characteristic triplet of $\mu$, the differential operator $\frac{1}{2}\sum_{i,j=1}^{N^2}y_{i,j}Y_i^l Y_j^l$ and the measure $\Pi$ are both conjugate invariant. The mapping $i \Im$ has been chosen to be conjugate invariant and following the proof of Proposition 4.2.2 of \cite{LEVY2010}, we deduce that $Y_0$ is in the center of $\mathfrak{u}(N)$: there exists $y_0\in \mathbb{R}$ such that $Y_0=i y_0\Id_N$.

Because $\left\{Y_1,\ldots,Y_{N^2-1}\right\}$ is a basis of the conjugate invariant Lie subalgebra $\frak{su}(N)$, $\{y_{i,N},y_{N,i}:1\leq i \leq N^2\}=\{0\}$, and because $\frak{su}(N)$ is simple, there exists $\alpha\in \R$ such that $(y_{i,j})_{1\leq i,j \leq (N-1)^2}=\alpha I_{N-1}$. We set $\beta=y_{N,N}$.
\end{proof}
Thus, the invariance by conjugation of $\mu$ implies that its generator $L$ is a bi-invariant pseudo-differential operator. In this particular case, the expression of $d\rho^n_{U(N)}(L)$ can be described with the help of the symmetric group. It is the objet of the next section to use the Schur-Weyl duality in order to formulate a new expression of $d\rho^n_{U(N)}(L)$.

\subsection{Schur-Weyl duality}\label{SW}The Schur-Weyl duality is a deep relation between the actions of $U(N)$ and $\mathfrak{S}_n$ on $(\mathbb{C}^N)^{\otimes n}$ which allows one to transfer some elements relative to $U(N)$ to elements relative to $\mathfrak{S}_n$ (see \cite{Collins2003,Collins2004}, and also \cite{LEVY2008} and~\cite {Dahlqvist2012}). Let us spell out this fruitful duality.

Let $n\in \mathbb{N}$. Define the action $\rho_N^{\mathfrak{S}_n}$ of $\mathfrak{S}_n$ on $(\mathbb{C}^N)^{\otimes n}$ as follows: for all $\sigma \in \mathfrak{S}_n$ and $x_1,\ldots,x_n\in \mathbb{C}^N$, we set
$$(\rho_N^{\mathfrak{S}_n}(\sigma))(x_1\otimes \cdots \otimes x_n)=x_{\sigma^{-1}(1)}\otimes \cdots \otimes x_{\sigma^{-1}(n)}.$$
Let us denote by $\mathbb{C}[\mathfrak{S}_n]$ the group algebra of $\mathfrak{S}_n$. The action $\rho_N^{\mathfrak{S}_n}$ determines a homomorphism of associative algebra $d \rho_N^{\mathfrak{S}_n}:\mathbb{C}[\mathfrak{S}_n]\rightarrow \End((\mathbb{C}^N)^{\otimes n})$.
The Schur-Weyl duality asserts that the subalgebras of $\End((\mathbb{C}^N)^{\otimes n})$ generated by the action of $U(N)$ and $\mathfrak{S}_n$ are each other's commutant. In particular, all element of $\End((\mathbb{C}^N)^{\otimes n})$ which commutes with $\rho^n_{U(N)} (g)$ for all $g\in U(N)$ is an element of the algebra generated by $\rho_N^{\mathfrak{S}_n}(\mathfrak{S}_n)$, that is to say an element of $d\rho_N^{\mathfrak{S}_n}(\mathbb{C}[\mathfrak{S}_n])$.

For all $\mathbf{A}\in \End((\mathbb{C}^N)^{\otimes n})$, we define
$$E(\mathbf{A})=\int_{U(N)}g^{\otimes n}\circ \mathbf{A} \circ (g^*)^{\otimes n} \diff g \in \End((\mathbb{C}^N)^{\otimes n})$$
where the integration is taken with respect to the Haar measure of $U(N)$. Obviously, $E(\mathbf{A})$ commutes with $\rho^n_{U(N)} (g)$ for all $g\in U(N)$, and due to the Schur-Weyl duality, $E(\mathbf{A})$ has to lie in $d\rho_N^{\mathfrak{S}_n}(\mathbb{C}[\mathfrak{S}_n])$. In Proposition 2.4 of \cite{Collins2004}, Collins and \'{S}niady answered the question of determining an element of $\mathbb{C}[\mathfrak{S}_n]$ which is mapped on $E(\mathbf{A})$, as follows. Set
$$\Phi(\mathbf{A})=\displaystyle\sum_{\sigma\in\mathfrak{S}_n }\Tr\left(\mathbf{A}\circ \rho_N^{\mathfrak{S}_n}(\sigma^{-1}) \right) \cdot \sigma \in \mathbb{C}[\mathfrak{S}_n]$$
and define $\Wg=\sum_{\sigma\in\mathfrak{S}_n}\Wg(\sigma)\cdot\sigma\in  \mathbb{C}[\mathfrak{S}_n]$ such that $d \rho_N^{\mathfrak{S}_n}( \Phi(\Id_N^{\otimes n})\cdot \Wg)=\Id_N^{\otimes n}$. If $n\leq N$, the element $ \Phi(\Id_N^{\otimes n})$ is invertible and $\Wg$ must be $ \Phi(\Id_N^{\otimes n})^{-1}$. If $N< n$, one can choose any pseudo-inverse of the symmetric element $ \Phi(\Id_N^{\otimes n})$ to be $\Wg$. Let us insist on the fact that $\Wg$ depends on both $n$ and $N$, even if for convenience, this dependence is not explicit in the notation.
\begin{proposition}[\cite{Collins2004}]For all $\mathbf{A}\in \End((\mathbb{C}^N)^{\otimes n})$, we have $E(\mathbf{A})=d\rho_N^{\mathfrak{S}_n}( \Phi(\mathbf{A}) \Wg).$\label{collins}
\end{proposition}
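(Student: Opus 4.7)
The plan is to realize $E$ as the conditional expectation onto the Schur-Weyl commutant $d\rho_N^{\mathfrak{S}_n}(\mathbb{C}[\mathfrak{S}_n])$, to recognize $\Phi$ as (essentially) the adjoint of $d\rho_N^{\mathfrak{S}_n}$ with respect to the natural trace inner products, and to invert the composition $\Phi\circ d\rho_N^{\mathfrak{S}_n}$ using $\Wg$.

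First, by left-invariance of the Haar measure one has $h^{\otimes n} E(\mathbf{A}) (h^*)^{\otimes n} = E(\mathbf{A})$ for every $h\in U(N)$, and Schur-Weyl duality therefore produces some $x\in\mathbb{C}[\mathfrak{S}_n]$ (not necessarily unique when $N<n$) such that $E(\mathbf{A}) = d\rho_N^{\mathfrak{S}_n}(x)$. Next, using that $\rho_N^{\mathfrak{S}_n}(\sigma^{-1})$ commutes with $g^{\otimes n}$ (the other half of Schur-Weyl duality), together with the cyclicity of the trace and $g^*g=\Id_N$, I would check
\begin{equation*}
\Tr\bigl(E(\mathbf{A})\,\rho_N^{\mathfrak{S}_n}(\sigma^{-1})\bigr)=\int_{U(N)}\Tr\bigl(g^{\otimes n}\mathbf{A}(g^*)^{\otimes n}\rho_N^{\mathfrak{S}_n}(\sigma^{-1})\bigr)\diff g=\Tr\bigl(\mathbf{A}\,\rho_N^{\mathfrak{S}_n}(\sigma^{-1})\bigr)
\end{equation*}
for every $\sigma\in\mathfrak{S}_n$, which is precisely the identity $\Phi(E(\mathbf{A}))=\Phi(\mathbf{A})$.

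Second, I would compute $\Phi\circ d\rho_N^{\mathfrak{S}_n}$ on the group-algebra basis: for each $\sigma'\in\mathfrak{S}_n$, the substitution $\tau=\sigma'\sigma^{-1}$ together with $\Tr(\rho_N^{\mathfrak{S}_n}(\tau))=N^{\ell(\tau)}$ yields
\begin{equation*}
\Phi(\rho_N^{\mathfrak{S}_n}(\sigma'))=\sum_{\sigma\in\mathfrak{S}_n}\Tr\bigl(\rho_N^{\mathfrak{S}_n}(\sigma'\sigma^{-1})\bigr)\,\sigma=\sum_{\tau\in\mathfrak{S}_n}N^{\ell(\tau)}\,\tau\sigma'=\Phi(\Id_N^{\otimes n})\cdot\sigma'.
\end{equation*}
By linearity, $\Phi\circ d\rho_N^{\mathfrak{S}_n}(y)=\Phi(\Id_N^{\otimes n})\cdot y$ for every $y\in\mathbb{C}[\mathfrak{S}_n]$, and the element $\Phi(\Id_N^{\otimes n})$ is central because $\tau\mapsto N^{\ell(\tau)}$ is a class function. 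Combined with the first step this gives $\Phi(\Id_N^{\otimes n})\cdot x=\Phi(\mathbf{A})$; by centrality, $\Wg\cdot\Phi(\Id_N^{\otimes n})\cdot x=\Wg\cdot\Phi(\mathbf{A})=\Phi(\mathbf{A})\cdot\Wg$. Applying the algebra homomorphism $d\rho_N^{\mathfrak{S}_n}$ and using the defining property $d\rho_N^{\mathfrak{S}_n}(\Phi(\Id_N^{\otimes n})\Wg)=\Id_N^{\otimes n}$ then yields
\begin{equation*}
d\rho_N^{\mathfrak{S}_n}(\Phi(\mathbf{A})\,\Wg)=d\rho_N^{\mathfrak{S}_n}\bigl(\Wg\,\Phi(\Id_N^{\otimes n})\bigr)\circ d\rho_N^{\mathfrak{S}_n}(x)=\Id_N^{\otimes n}\circ E(\mathbf{A})=E(\mathbf{A}).
\end{equation*}

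The main subtlety is the regime $N<n$, where $\Phi(\Id_N^{\otimes n})$ fails to be invertible in $\mathbb{C}[\mathfrak{S}_n]$ (its kernel corresponds to Young diagrams with more than $N$ rows), the element $x$ in the first step is not unique, and $\Wg$ is only a pseudoinverse. The argument above is robust to this ambiguity because the only property of $\Wg$ that is ever used is the operator-level identity $d\rho_N^{\mathfrak{S}_n}(\Phi(\Id_N^{\otimes n})\Wg)=\Id_N^{\otimes n}$, rather than any group-algebra identity $\Phi(\Id_N^{\otimes n})\Wg=e$ requiring genuine invertibility.
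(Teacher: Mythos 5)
Your proposal is correct in substance and follows essentially the same route as the paper's (very condensed) argument: invariance of the Haar measure plus Schur--Weyl duality to write $E(\mathbf{A})=d\rho_N^{\mathfrak{S}_n}(x)$, the trace identity $\Phi(E(\mathbf{A}))=\Phi(\mathbf{A})$, the computation $\Phi\bigl(d\rho_N^{\mathfrak{S}_n}(y)\bigr)=\Phi(\Id_N^{\otimes n})\cdot y$ (which is the paper's step $\rho_N^{\mathfrak{S}_n}(\Phi(E(\mathbf{A})\cdot \Id_N^{\otimes n}))=E(\mathbf{A})\cdot\rho_N^{\mathfrak{S}_n}(\Phi(\Id_N^{\otimes n}))$ in a different guise), and finally the defining property of $\Wg$. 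Two points need attention. First, a cosmetic one: with the substitution $\tau=\sigma'\sigma^{-1}$ the middle sum is really $\sum_\tau N^{\ell(\tau)}\,\tau^{-1}\sigma'$; it equals what you wrote only after using $\ell(\tau)=\ell(\tau^{-1})$, which you should say. Second, and more importantly, the equality $\Wg\cdot\Phi(\mathbf{A})=\Phi(\mathbf{A})\cdot\Wg$ is not justified ``by centrality'': centrality of $\Phi(\Id_N^{\otimes n})$ does not make $\Wg$ commute with the (generally non-central) element $\Phi(\mathbf{A})$. It would hold if $\Wg$ itself were central, which is automatic for $n\leq N$ (inverse of a central element), but is not guaranteed by the paper's convention that for $N<n$ one may take \emph{any} pseudo-inverse of $\Phi(\Id_N^{\otimes n})$; moreover this step contradicts your own closing claim that only the operator-level identity $d\rho_N^{\mathfrak{S}_n}(\Phi(\Id_N^{\otimes n})\Wg)=\Id_N^{\otimes n}$ is used. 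The repair is one line and stays entirely within your framework: from $\Phi(\mathbf{A})=\Phi(\Id_N^{\otimes n})\cdot x=x\cdot\Phi(\Id_N^{\otimes n})$ (centrality of $\Phi(\Id_N^{\otimes n})$ only) you get $d\rho_N^{\mathfrak{S}_n}(\Phi(\mathbf{A})\Wg)=d\rho_N^{\mathfrak{S}_n}(x)\circ d\rho_N^{\mathfrak{S}_n}\bigl(\Phi(\Id_N^{\otimes n})\Wg\bigr)=d\rho_N^{\mathfrak{S}_n}(x)=E(\mathbf{A})$, with no commutation between $\Wg$ and $\Phi(\mathbf{A})$ needed. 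With that reordering your proof is complete and coincides with the paper's.
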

Very succinctly, the argument is as follows:
\[ \rho_N^{\mathfrak{S}_n}( \Phi(\mathbf{A}) )=\rho_N^{\mathfrak{S}_n}( \Phi(E(\mathbf{A})) )=\rho_N^{\mathfrak{S}_n}( \Phi( E(\mathbf{A})\cdot\Id_N^{\otimes n}) )=E(\mathbf{A})\cdot\rho_N^{\mathfrak{S}_n}( \Phi( \Id_N^{\otimes n}) ). \]
It allows us to write explicitly elements of the commutant of the algebra generated by $\rho^n_{U(N)}$ as elements of $d\rho_N^{\mathfrak{S}_n}(\mathbb{C}[\mathfrak{S}_n])$. Indeed, if $\mathbf{A}$ commutes with $\rho^n_{U(N)} (g)$ for all $g\in U(N)$, we have $$\mathbf{A}=E(\mathbf{A})=d\rho_N^{\mathfrak{S}_n}( \Phi(\mathbf{A}) \Wg).$$
Moreover, they give an asymptotic of the Weingarten function.
\begin{proposition}[\cite{Collins2004}]For all $\sigma\in \mathfrak{S}_n $, we have $\Wg(\sigma)=O(N^{-n-|\sigma|})$ when $N$ tends to $\infty$. We have also $\Wg(1_{\mathfrak{S}_n})=N^{-n}+O(N^{-n-2})$ when $N$ tends to $\infty$.\label{collins2}
\end{proposition}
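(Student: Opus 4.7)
The plan is to compute $\Phi(\Id_N^{\otimes n})$ explicitly as an element of $\mathbb{C}[\mathfrak{S}_n]$, observe that for $N$ large enough it factorizes as $N^n$ times an element of the form $1_{\mathfrak{S}_n}+X$ with $X$ of order $O(N^{-1})$, and then invert using a Neumann series. The asymptotic control on the coefficients is then immediate from the triangle inequality for the distance $|\cdot|$ on $\mathfrak{S}_n$ introduced in Section~\ref{dist}.

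More precisely, for $\tau\in\mathfrak{S}_n$ we have $\Tr(\rho_N^{\mathfrak{S}_n}(\tau))=N^{\ell(\tau)}=N^{n-|\tau|}$, so
\[
\Phi(\Id_N^{\otimes n})=\sum_{\sigma\in\mathfrak{S}_n}N^{n-|\sigma|}\cdot\sigma=N^n\Bigl(1_{\mathfrak{S}_n}+X\Bigr),\qquad X:=\sum_{\sigma\neq 1_{\mathfrak{S}_n}}N^{-|\sigma|}\cdot\sigma.
\]
First I would restrict to $N\geq n$, so that $\Phi(\Id_N^{\otimes n})$ is genuinely invertible in $\mathbb{C}[\mathfrak{S}_n]$ and $\Wg=\Phi(\Id_N^{\otimes n})^{-1}$; since we want asymptotics as $N\to\infty$ with $n$ fixed, this costs nothing. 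For such $N$ the element $X$ is small in any fixed norm on the finite-dimensional algebra $\mathbb{C}[\mathfrak{S}_n]$, so the Neumann series converges and gives
\[
\Wg=N^{-n}\sum_{k=0}^{\infty}(-X)^{k}=N^{-n}\sum_{k=0}^{\infty}(-1)^{k}\sum_{\substack{\sigma_1,\ldots,\sigma_k\neq 1_{\mathfrak{S}_n}}}N^{-(|\sigma_1|+\cdots+|\sigma_k|)}\,\sigma_1\cdots\sigma_k.
\]

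Next I would extract the coefficient of a given $\tau\in\mathfrak{S}_n$. The key point is the triangle inequality $|\sigma_1\cdots\sigma_k|\leq|\sigma_1|+\cdots+|\sigma_k|$ for the word metric on $\mathfrak{S}_n$ generated by transpositions. Hence every decomposition $\tau=\sigma_1\cdots\sigma_k$ with $\sigma_i\neq 1_{\mathfrak{S}_n}$ satisfies $|\sigma_1|+\cdots+|\sigma_k|\geq|\tau|$, and the number of such decompositions for a fixed $k$ is bounded independently of $N$. Summing over $k$ (a finite effective sum, since $|\sigma_i|\geq 1$ bounds the relevant terms) shows that the coefficient of $\tau$ is $N^{-n}\cdot O(N^{-|\tau|})=O(N^{-n-|\tau|})$, which is the first claim.

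For the identity, the $k=0$ term contributes exactly $N^{-n}$. For $k\geq 1$, one needs $\sigma_1\cdots\sigma_k=1_{\mathfrak{S}_n}$ with each $\sigma_i\neq 1_{\mathfrak{S}_n}$; this forces $k\geq 2$ and $|\sigma_1|+\cdots+|\sigma_k|\geq 2$, with equality achieved (e.g.\ by $\sigma_2=\sigma_1^{-1}$ a transposition). Hence the sum of all $k\geq 1$ contributions is $O(N^{-2})$ before the prefactor $N^{-n}$, yielding $\Wg(1_{\mathfrak{S}_n})=N^{-n}+O(N^{-n-2})$.

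The only conceptual obstacle I foresee is the non-invertibility of $\Phi(\Id_N^{\otimes n})$ when $n>N$, which forces one to work with a pseudo-inverse; but since we only need the asymptotic behavior as $N\to\infty$ with $n$ fixed, the regime $N\geq n$ is eventually reached and the argument above applies verbatim. The slightly delicate bookkeeping is simply making sure the Neumann expansion is organized by the values of $|\sigma_1|+\cdots+|\sigma_k|$ rather than by $k$, so that the geometric bound $O(N^{-|\tau|})$ is visible term by term.
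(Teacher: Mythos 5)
The paper does not prove this proposition at all: it is quoted verbatim from Collins and \'{S}niady's work and used as a black box, so there is no internal argument to compare yours against. Your reconstruction is correct and is essentially the standard proof from that reference: writing $\Phi(\Id_N^{\otimes n})=\sum_{\sigma}N^{\ell(\sigma)}\sigma=N^{n}(1_{\mathfrak{S}_n}+X)$ with $X=\sum_{\sigma\neq 1_{\mathfrak{S}_n}}N^{-|\sigma|}\sigma$, restricting to $N\geq n$ where $\Wg=\Phi(\Id_N^{\otimes n})^{-1}$ (the paper itself notes invertibility there, and the large-$N$ regime is all that matters for the asymptotics), expanding the Neumann series, and invoking the subadditivity $|\sigma_1\cdots\sigma_k|\leq|\sigma_1|+\cdots+|\sigma_k|$ of the Cayley-graph metric. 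The two points you flag as delicate are genuinely the only ones, and both close as you indicate: since every $|\sigma_i|\geq 1$, the order-$k$ term of the series is $O(N^{-\max(k,|\tau|)})$ with an $N$-independent number of summands for each $k$, so the tail of the Neumann series is absorbed into the error; and for $\tau=1_{\mathfrak{S}_n}$ the constraint $\sigma_1\cdots\sigma_k=1_{\mathfrak{S}_n}$ with nontrivial factors forces $k\geq 2$, hence $\sum_i|\sigma_i|\geq 2$, giving the $O(N^{-n-2})$ correction. (One could also note the parity argument $\sum_i|\sigma_i|\equiv|\tau|\pmod 2$, which gives the same conclusion for the identity.) No gap.
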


\begin{example}\label{undeux}
\begin{enumerate}
\item for $n=1$: we have $\Wg=1_{\mathfrak{S}_n}/N$, and therefore, for all $A\in \End(\mathbb{C}^N)$,$$E(A)=\frac{1}{N}\Tr(A)I_N;$$
\item for $n=2$: we have $\Wg=\frac{1}{N^2-1}\left(1_{\mathfrak{S}_n}-\frac{1}{N}(1,2)\right)$, and thus, for all $A,B \in \End(\mathbb{C}^N)$,
$$E(A\otimes B)=\frac{1}{N^2-1}\Big((\Tr(A)\Tr(B)-\Tr(AB)/N)\cdot I_N^{\otimes 2}+(\Tr(AB)-\Tr(A)\Tr(B)/N)\cdot d\rho_N^{\mathfrak{S}_n}((1,2))\Big) .$$
\end{enumerate}
\end{example}

The generator $L$ of a conjugate invariant convolution semigroup is a bi-invariant pseudo-differential operator, and by consequence the element $d \rho^n_{U(N)}( L)$ commutes with $\rho^n_{U(N)} (g)$ for all $g\in U(N)$. Thus, it is an element of $d\rho_N^{\mathfrak{S}_n}(\mathbb{C}[\mathfrak{S}_n])$. Let $\mathcal{T}_n$ be the subset of $\mathfrak{S}_n$ consisting of all the transpositions. For all $1\leq k_1<\ldots < k_m\leq n$, let us denote by $\iota_{k_1,\ldots,k_m}^{\mathfrak{S}_n}:\mathfrak{S}_m\to \mathfrak{S}_n$ (or more simply $\iota_{k_1,\ldots,k_m}$) the mapping defined by
$$\iota_{k_1,\ldots,k_m}(\sigma):\left|\begin{array}{rll}k_i \mapsto & k_{\sigma(i)} &  \\i \mapsto & i & \text{ for }i\notin \{k_1,\ldots, k_m\}.\end{array}\right.$$
This map is such that $\rho_N^{\mathfrak{S}_n}\circ \iota_{k_1,\ldots,k_m}^{\mathfrak{S}_n}=\iota_{k_1,\ldots,k_m}^{M_N(\mathbb{C})^{\otimes n}}\circ \rho_N^{\mathfrak{S}_m}$. We are now ready to state the main result of this section.

\begin{proposition}\label{momsemi}Let $y_0, \alpha,\beta\in \mathbb{R}$ and $\Pi$ be a L\'{e}vy measure on $U(N)$ which is conjugate invariant. Let $\mu\in \ID(U(N),\circledast)$ with characteristic triplet $$\left(i y_0I_N,\left(\begin{array}{cccc}\alpha &  &  & 0 \\ & \ddots &  &  \\ &  & \alpha &  \\0 &  &  & \beta\end{array}\right), \Pi\right).$$
We have
$
\int_{U(N)}g^{\otimes n}\diff \mu(g)=d\rho^{\mathfrak{S}_n}_N(e^{\tilde{L}}),$
where
\begin{multline*}
\tilde{L}= \left(niy_0-\frac{n^2}{N}\frac{\beta}{2}+\left(\frac{n^2}{N}-nN\right)\frac{\alpha}{2}+\frac{n}{N}\int_{U(N)}\Tr\left( \Re(g)-1\right) \Pi(\diff g)\right)1_{\mathfrak{S}_n} -\alpha \displaystyle\sum_{\tau\in \mathcal{T}_n}\tau\\
+\displaystyle\sum_{\substack{2\leq m \leq n\\1\leq k_1<\ldots<k_m \leq n}}\displaystyle\sum_{\sigma, \pi \in \mathfrak{S}_m}\Wg(\sigma^{-1} \pi)\cdot \int_{U(N)}\displaystyle\prod_{c \text{ cycle of }\sigma}\Tr\left((g-1)^{\sharp c}\right) \ \Pi(\diff g)\cdot \iota_{k_1,\ldots,k_m}(\pi).
\end{multline*}
\end{proposition}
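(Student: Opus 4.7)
The plan is to start from Propositions~\ref{momsemiu} and~\ref{lulu}, which give $\int_{U(N)}g^{\otimes n}\,d\mu(g)=\exp(d\rho^n_{U(N)}(L))$ and decompose $d\rho^n_{U(N)}(L)$ into a drift piece, a Brownian piece, and a jump piece. Because $\mu$ is conjugate invariant, each of these three summands commutes with $\rho^n_{U(N)}(g)$ for every $g\in U(N)$, so by Schur--Weyl duality each lies in the image of $d\rho_N^{\mathfrak{S}_n}$. The strategy is to lift the three pieces to explicit elements of $\C[\mathfrak{S}_n]$, add them up to form $\tilde L$, and conclude using the fact that $d\rho_N^{\mathfrak{S}_n}$ is an algebra homomorphism, so that $\exp(d\rho_N^{\mathfrak{S}_n}(\tilde L))=d\rho_N^{\mathfrak{S}_n}(e^{\tilde L})$.

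For the drift contribution $\sum_{k=1}^n\iota_k\bigl(Y_0+\int(\Re(g)-I_N)\,\Pi(dg)\bigr)$, conjugation invariance of $\Pi$ forces the matrix $\int(\Re(g)-I_N)\,\Pi(dg)$ to be scalar, equal to $\tfrac{1}{N}\int\Tr(\Re(g)-I_N)\,\Pi(dg)\cdot I_N$ by taking the trace. Combined with $Y_0=iy_0I_N$ and the fact that $\iota_k(cI_N)=c\,I_N^{\otimes n}$, this piece equals $\bigl(niy_0+\tfrac{n}{N}\int\Tr(\Re(g)-I_N)\,\Pi(dg)\bigr)I_N^{\otimes n}$, which lifts to the corresponding multiple of $1_{\mathfrak{S}_n}$.

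For the Brownian piece $\tfrac{1}{2}\sum_{i,j}y_{ij}\sum_{k,l}\iota_k(Y_i)\iota_l(Y_j)$, the key inputs are the Casimir identities in the matrix-unit basis: one checks $\sum_{\alpha=1}^{N^2}Y_\alpha\otimes Y_\alpha=-P$, where $P$ is the flip on $\C^N\otimes\C^N$, and $\sum_{\alpha=1}^{N^2}Y_\alpha^2=-NI_N$. Subtracting the contribution of $Y_{N^2}=iI_N/\sqrt{N}$ yields $\sum_{i=1}^{N^2-1}Y_i\otimes Y_i=-P+\tfrac{1}{N}I_N\otimes I_N$ and $\sum_{i=1}^{N^2-1}Y_i^2=-\tfrac{N^2-1}{N}I_N$. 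Inserting these at positions $(k,l)$ and $k$ respectively, using that the flip at positions $k\neq l$ of $(\C^N)^{\otimes n}$ corresponds to $\rho_N^{\mathfrak{S}_n}((k,l))$, and summing over all $(k,l)\in\{1,\dots,n\}^2$ with the $\tfrac{\alpha}{2}$ coefficient produces $-\alpha\sum_{\tau\in\mathcal{T}_n}\rho_N^{\mathfrak{S}_n}(\tau)+\tfrac{\alpha}{2}\bigl(\tfrac{n^2}{N}-nN\bigr)I_N^{\otimes n}$. The $\tfrac{\beta}{2}$ part contributes $-\tfrac{\beta n^2}{2N}I_N^{\otimes n}$ since $\iota_k(Y_{N^2})\iota_l(Y_{N^2})=-\tfrac{1}{N}I_N^{\otimes n}$ for all $k,l$. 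Altogether, the Brownian piece lifts to $-\alpha\sum_{\tau\in\mathcal{T}_n}\tau$ plus the scalar $\bigl((\tfrac{n^2}{N}-nN)\tfrac{\alpha}{2}-\tfrac{n^2\beta}{2N}\bigr)1_{\mathfrak{S}_n}$.

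For the jump piece $\sum_{m=2}^{n}\sum_{k_1<\cdots<k_m}\iota_{k_1,\dots,k_m}(\mathbf{A}_m)$ with $\mathbf{A}_m=\int(g-I_N)^{\otimes m}\,\Pi(dg)$, conjugation invariance of $\Pi$ (via the change of variables $g\mapsto h^*gh$) ensures that $\mathbf{A}_m$ commutes with $\rho^m_{U(N)}$, so Proposition~\ref{collins} gives $\mathbf{A}_m=d\rho_N^{\mathfrak{S}_m}(\Phi(\mathbf{A}_m)\cdot\Wg)$. The standard trace identity $\Tr\bigl((A_1\otimes\cdots\otimes A_m)\rho_N^{\mathfrak{S}_m}(\sigma^{-1})\bigr)=\prod_{c\text{ cycle of }\sigma}\Tr\bigl(\prod_{j\in c}A_j\bigr)$, specialised to $A_j=g-I_N$ and integrated against $\Pi$, yields $\Phi(\mathbf{A}_m)=\sum_\sigma\int\prod_c\Tr((g-I_N)^{\sharp c})\,\Pi(dg)\cdot\sigma$. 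Multiplying by $\Wg=\sum_\tau\Wg(\tau)\tau$, setting $\pi=\sigma\tau$, applying $\iota_{k_1,\dots,k_m}^{\mathfrak{S}_n}$ and summing over $m$ and $k_1<\cdots<k_m$ reproduces the last line of $\tilde L$. Combining the three lifts gives $d\rho^n_{U(N)}(L)=d\rho_N^{\mathfrak{S}_n}(\tilde L)$, hence the claimed identity. The main obstacle is the bookkeeping in the Brownian term, where one must cleanly separate $\mathfrak{u}(N)$ from $\mathfrak{su}(N)$, handle the diagonal and off-diagonal cases $k=l$ and $k\neq l$, and verify that the scalar contributions from drift, $\alpha$-part, $\beta$-part and $\Pi$-integral combine exactly to the coefficient of $1_{\mathfrak{S}_n}$ displayed in $\tilde L$.
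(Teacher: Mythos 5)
Your proposal is correct and follows essentially the same route as the paper: starting from Propositions~\ref{momsemiu} and~\ref{lulu}, lifting the drift, Brownian and jump parts to $\C[\mathfrak{S}_n]$ via Schur--Weyl duality and the Collins--\'Sniady formula of Proposition~\ref{collins}, with scalar bookkeeping (the coefficients $-\frac{n^2}{N}\frac{\beta}{2}$ and $\left(\frac{n^2}{N}-nN\right)\frac{\alpha}{2}$) that checks out. The only cosmetic difference is that you verify the identity $\sum_{i=1}^{N^2-1}Y_i\otimes Y_i=-P+\frac{1}{N}I_N^{\otimes 2}$ by a direct Casimir computation, whereas the paper obtains it by applying the $n=2$ Weingarten formula of Example~\ref{undeux} to this conjugation-invariant element.
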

\begin{proof}Let $(\mu_t)_{t\in \mathbb{R}^+}$ be the weakly continuous semigroup of convolution whose characteristic triplet is $$\left(i y_0I_N,\left(\begin{array}{cccc}\alpha &  &  & 0 \\ & \ddots &  &  \\ &  & \alpha &  \\0 &  &  & \beta\end{array}\right), \Pi\right),$$
and let $L$ be its generator. By definition, $\mu=\mu_1$, and thanks to Proposition~\ref{momsemiu}, we know that
$$
\int_{U(N)}g^{\otimes n}\diff \mu(g)=\exp(d\rho^n_{U(N)}(L)).$$
%
To conclude, it suffices to prove that 
%
%
%
$d \rho^n_{U(N)}( L)=d \rho^{\mathfrak{S}_n}_N(\tilde{L})$.
%
We start from Proposition~\ref{lulu}. We have
\begin{eqnarray*}
d\rho^n_{U(N)}(L)&=& \displaystyle\sum_{1\leq k \leq n}\iota_{k}\left(iny_0+\int_{U(N)}(\Re(g)-I_N)\Pi(\diff g)\right)\\
&&+\frac{\alpha}{2} \displaystyle\sum_{i=1}^{N^2-1}\cdot \displaystyle\sum_{1\leq k,l \leq n}\iota_{k}( Y_i)\circ \iota_{l}( Y_i)+\frac{\beta}{2}\displaystyle\sum_{1\leq k,l \leq n}\iota_{k}( Y_{N^2})\circ \iota_{l}( Y_{N^2})\\
&&+\displaystyle\sum_{\substack{2\leq m \leq n\\1\leq k_1<\ldots<k_m \leq n}}\iota_{k_1,\ldots,k_m}\left(\int_{U(N)} (g-I_N)^{\otimes m}\Pi(\diff g)\right).
\end{eqnarray*}
Thanks to the invariance under conjugation of $\Pi$ and $\sum_{i=1}^{N^2-1} Y_i\otimes Y_i$, we know from Example~\ref{undeux} that
$$
\int_{U(N)}(\Re(g)-I_N)\Pi(\diff g)=\int_{U(N)}E(\Re(g)-I_N)\Pi(\diff g)=\int_{U(N)}\frac{1}{N}\Tr\left( \Re(g)-1\right) \Pi(\diff g)$$
and $$\displaystyle\sum_{i=1}^{N^2-1}Y_i\otimes Y_i= E\left(\displaystyle\sum_{i=1}^{N^2-1}Y_i\otimes Y_i\right)=\frac{1}{N}I_N^{\otimes 2}-\rho_N^{\mathfrak{S}_n}((1,2)).$$
We also deduce from Proposition~\ref{collins} that
\begin{align*}
\int_{U(N)} (g-I_N)^{\otimes m}\Pi(\diff g)&=\int_{U(N)}E\left( (g-I_N)^{\otimes m}\right)\Pi(\diff g)\\
&=\int_{U(N)} \displaystyle\sum_{\sigma, \pi \in \mathfrak{S}_m}\Wg(\sigma^{-1} \pi)\displaystyle\prod_{c \text{ cycle of }\sigma}\Tr\left((g-1)^{\sharp c}\right) \cdot  d\rho_N^{\mathfrak{S}_m}(\pi)\Pi(\diff g).
\end{align*}
Thus we have
\begin{eqnarray*}
\diff\rho^n_{U(N)}(L)&=&\left(niy_0-\frac{n^2}{N}\frac{\beta}{2}+\left(\frac{n^2}{N}-nN\right)\frac{\alpha}{2}+\frac{n}{N}\int_{U(N)}\Tr\left( \Re(g)-1\right) \Pi(\diff g)\right)I_N^{\otimes n}\\
&&-\alpha \displaystyle\sum_{i=1}^{N^2-1}\cdot \displaystyle\sum_{1\leq k<l \leq n}\iota_{k,l}\circ \rho_N^{\mathfrak{S}_2}((1,2))\\
&&+\displaystyle\sum_{\substack{2\leq m \leq n\\1\leq k_1<\ldots<k_m \leq n}}\displaystyle\sum_{\sigma, \pi \in \mathfrak{S}_m}\Wg(\sigma^{-1} \pi)\cdot \int_{U(N)}\displaystyle\prod_{c \text{ cycle of }\sigma}\Tr\left((g-1)^{\sharp c}\right) \ \Pi(\diff g)\\
&&\hspace{9cm}\cdot \iota_{k_1,\ldots,k_m}\circ \rho_N^{\mathfrak{S}_m}(\pi),
\end{eqnarray*}
from which we deduce that $d \rho^n_{U(N)}( L)=d \rho^{\mathfrak{S}_n}_N(\tilde{L})$.
\end{proof}

%

\section{The stochastic exponential $\mathcal{E}_N$}\label{secfive}

In this section, we shall describe $\mathcal{E}_N$, a map which connects the infinitely divisible measures on the space of Hermitian matrices $\mathcal{H}_N$ and the infinitely divisible measures on $U(N)$. We start by presenting $\mathcal{E}_N$ in Proposition-Definition~\ref{ththree}, and the rest of the section is devoted to the proof of Proposition-Definition~\ref{ththree}.

We consider the Hilbert space of Hermitian matrices $$\mathcal{H}_N=\{x\in M_N(\mathbb{C}):x^*=x\}.$$
We denote by $\ast$ the classical convolution on the vector space $\mathcal{H}_N$: given two probability measures $\mu$ and $\nu$ on $\mathcal{H}_N$, the convolution $\mu\ast \nu$ is such that $\int_{\mathcal{H}_N} f \diff (\mu\ast \nu)=\int_{\mathcal{H}_N}\int_{\mathcal{H}_N} f(x+y) \ \mu(\diff x) \nu(\diff y)$ for all bounded Borel function $f$ on $\mathcal{H}_N$. Let us denote by $\ID(\mathcal{H}_N,\ast)$ the space of infinitely divisible probability measures on $\mathcal{H}_N$ and by $\ID_\inv(\mathcal{H}_N,\ast)$ the subspace of measures $\mu$ in $\ID(\mathcal{H}_N,\ast)$ which are invariant by unitary conjugation, that is, such that for all bounded Borel function $f$ on $\mathcal{H}_N$ and all $g\in U(N)$, we have
$$\int_{\mathcal{H}_N}f\diff \mu=\int_{\mathcal{H}_N}f(gxg^*)\diff \mu(x) .$$

\subsection{Infinite divisibility on $\mathcal{H}_N$}The advantage of $\ID(\mathcal{H}_N,\ast)$ is that each infinitely divisible measures arises from a unique convolution semigroup, and by consequence, is characterized by a unique generator. In order to describe this generator, we introduce now an inner product on $\mathcal{H}_N$ and we define the notion of Lévy measure.\label{secfivebis}

We endow $\mathcal{H}_N$ with the following inner product:
$$(x,y) \mapsto \left\langle x,y\right\rangle_{\mathcal{H}_N}= \Tr (x^*y)=\Tr (xy).$$
It is a real scalar product on $\mathcal{H}_N$ which is invariant by unitary conjugation. We remark that $i\mathcal{H}_N=\mathfrak{u}(N)$. Thus, the family $\left\{X_1,\ldots,X_{N^2}\right\}=\left\{-iY_1,\ldots,-iY_{N^2}\right\}$ is an orthonormal basis of $\mathcal{H}_N$ such that $X_{N^2}=\frac{1}{\sqrt{N}}I_N$. It is now useful to fix one compact neighborhood $B$ of $0$: we choose to set $B=B(0,1)$, the closed unit ball of $\mathcal{H}_N$.
A \textit{Lévy measure} $\Pi$ on $\mathcal{H}_N$ is a measure on $\mathcal{H}_N$ such that both $\Pi(\{0\})=0$ and such that $\int_{B} \|x\|_{\mathcal{H}_N}^2\ \Pi(\diff x)$ and $\Pi(B^c)$ are finite.

Let $C^2_b(\mathcal{H}_N)$ be the space of function $f\in C^2(\mathcal{H}_N)$ with bounded first and second-order partial derivatives.

\begin{theorem}[\cite{Sato1999,Liao2004}]
Let $\mu\in \ID(\mathcal{H}_N,\ast)$. There exists a unique weakly continuous semigroup $(\mu^{\ast t})_{t\in \mathbb{R}^+}$ such that $\mu^{\ast 0}=\delta_0$ 	and $\mu^{\ast 1}=\mu$.  There exist an element $X_0\in \mathcal{H}_N$, a symmetric positive semidefinite matrix $(y_{i,j})_{1\leq i,j \leq N^2}$ and a Lévy measure $\Pi$ on $\mathcal{H}_N$
such that the generator $L$ of $(\mu^{\ast t})_{t\in \mathbb{R}^+}$ is given for all $f\in C^2_b(\mathcal{H}_N)$ and all $y\in \mathcal{H}_N$ by 
\begin{equation}
Lf(y)=\partial_{X_0}f(y)+\frac{1}{2}\displaystyle\sum_{i,j=1}^{N^2}y_{i,j}\partial_{X_i}\partial_{X_j} f(y) 
+\int_{\mathcal{H}_N}f(y+x)-f(y)- 1_B(x)\partial_{x} f(y)\ \Pi(\diff x) .\label{genu}
\end{equation}
\end{theorem}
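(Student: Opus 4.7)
The plan is to reduce the statement to the classical Lévy-Khintchine theorem on $\R^{N^2}$, using the isometric isomorphism $\mathcal{H}_N \cong \R^{N^2}$ provided by the orthonormal basis $\{X_1,\ldots,X_{N^2}\}$, under which the convolution $\ast$ on $\mathcal{H}_N$ corresponds to the usual vector convolution. The construction of the semigroup proceeds via characteristic functions: since $\mu$ is infinitely divisible, a standard argument shows that $\hat{\mu}$ never vanishes, so a continuous logarithm $\psi := \log \hat{\mu}$ with $\psi(0) = 0$ is well-defined. For each $t \geq 0$, $\exp(t\psi)$ is positive-definite, first on rationals $t = k/n$ via $\exp((k/n)\psi) = (\widehat{\mu_n})^k$ with $\mu_n^{\ast n} = \mu$, and then on all $t\geq 0$ by continuity. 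Bochner's theorem yields $\mu^{\ast t}$, Lévy's continuity theorem gives weak continuity of $t \mapsto \mu^{\ast t}$, and uniqueness of $(\mu^{\ast t})$ follows from uniqueness of the continuous logarithm.

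Next, I would apply the vector-valued Lévy-Khintchine theorem to obtain a unique triplet $(X_0, A, \Pi)$, with $X_0 \in \mathcal{H}_N$, $A = (y_{i,j})$ symmetric positive semidefinite, and $\Pi$ a Lévy measure on $\mathcal{H}_N$, satisfying
$$\psi(\xi) = i\langle X_0, \xi\rangle - \tfrac{1}{2}\langle A\xi, \xi\rangle + \int_{\mathcal{H}_N}\!\bigl(e^{i\langle x,\xi\rangle} - 1 - i \langle x,\xi\rangle 1_B(x)\bigr)\Pi(\diff x).$$
The derivation of~\eqref{genu} then proceeds by computing $Lf = \lim_{t\to 0} (P_t f - f)/t$ first for $f$ in the Schwartz class: the Fourier transform turns $L$ into multiplication by $\psi$, and inverse Fourier transforming term by term yields the three summands of~\eqref{genu}, corresponding respectively to the drift, the Gaussian diffusion, and the jump part of $\psi$. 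The passage from Schwartz functions to all of $C^2_b(\mathcal{H}_N)$ is routine: by Taylor's theorem the integrand $f(y+x) - f(y) - 1_B(x)\partial_x f(y)$ is bounded, uniformly in $y$, by $C(f)\min(\|x\|_{\mathcal{H}_N}^2, 1)$, which is $\Pi$-integrable by definition of a Lévy measure.

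The main obstacle is to establish the Lévy-Khintchine decomposition of $\psi$ itself, that is, to extract $(X_0, A, \Pi)$ from the sequence $(\mu_n)$ of convolution roots of $\mu$. The strategy is a direct vector-valued adaptation of the scalar argument recalled in Section~\ref{adconv}: one shows that the finite measures $\diff\sigma_n(x) = n\|x\|_{\mathcal{H}_N}^2/(1 + \|x\|_{\mathcal{H}_N}^2)\, \mu_n(\diff x)$ on $\mathcal{H}_N$ form a tight sequence, and that any weak limit $\sigma$ determines the triplet through an analogue of~\eqref{rel}: the matrix $A$ is read off from the behaviour of $\sigma$ at the origin, the Lévy measure is $\Pi(\diff x) = \mathbf{1}_{\mathcal{H}_N\setminus\{0\}}(x)(1+\|x\|_{\mathcal{H}_N}^2)/\|x\|_{\mathcal{H}_N}^2\, \sigma(\diff x)$, and $X_0$ is the limit of the truncated first moments $n\int x/(1 + \|x\|_{\mathcal{H}_N}^2)\, \mu_n(\diff x)$, corrected to match the truncation $1_B$ in~\eqref{genu}. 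Uniqueness of the triplet follows from the injectivity of the Fourier transform together with the uniqueness in the Lévy-Khintchine representation.
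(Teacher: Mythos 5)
The paper offers no proof of this statement: it is quoted from \cite{Sato1999} and \cite{Liao2004}, so your proposal can only be measured against the standard proof of the L\'evy--Khintchine theorem on the finite-dimensional inner-product space $\mathcal{H}_N\cong\R^{N^2}$, which is indeed the route you take. The construction of the semigroup via the non-vanishing of $\widehat{\mu}$, the continuous logarithm $\psi$, positive-definiteness of $\exp(t\psi)$ first for rational $t$ and then by continuity, Bochner's theorem and L\'evy's continuity theorem is correct, as is the uniqueness argument. The derivation of \eqref{genu} from the L\'evy--Khintchine form of $\psi$ is also essentially right, and your Taylor bound $C(f)\min(\|x\|_{\mathcal{H}_N}^2,1)$ on the jump integrand is the correct estimate; note only that identifying $Lf=\lim_{t\to0}(P_tf-f)/t$ for \emph{every} $f\in C^2_b(\mathcal{H}_N)$, rather than for Schwartz $f$, is not a pure density argument and is usually done by a direct computation as in \cite{Sato1999}.

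The genuine gap is in the extraction of the triplet. Your accompanying measures $\diff\sigma_n(x)=n\,\|x\|_{\mathcal{H}_N}^2(1+\|x\|_{\mathcal{H}_N}^2)^{-1}\mu_n(\diff x)$ are \emph{scalar} measures, and in dimension $N^2>1$ their weak limit $\sigma$ does not determine the Gaussian part: all the mass responsible for the diffusion collapses onto the single atom at the origin, so $\sigma(\{0\})$ is one nonnegative number, whereas $(y_{i,j})$ is a full $N^2\times N^2$ symmetric positive semidefinite matrix. The one-dimensional relation \eqref{rel} recovers $a$ as $\sigma(\{0\})$ precisely because the covariance is a scalar there; there is no ``analogue of \eqref{rel}'' that produces a matrix from a point mass, because the directional information $\lim_{\epsilon\to0}\lim_{n}n\int_{\|x\|\le\epsilon}\langle X_i,x\rangle\langle X_j,x\rangle\,\mu_n(\diff x)$ is destroyed in the weak limit of $\sigma_n$. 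The standard repair is to keep this information explicitly: prove vague convergence of $n\mu_n$ away from the origin (which yields $\Pi$), convergence of the truncated first moments (which yields $X_0$), and convergence of the truncated second-moment \emph{matrices} above (which yields $(y_{i,j})$); equivalently, replace $\sigma_n$ by a quadratic-form-valued accompanying measure, or bypass the measures altogether by showing $n(\widehat{\mu_n}-1)\to\psi$ uniformly on compacta and decomposing $\psi$ analytically. With that correction the remainder of your argument, including uniqueness of the triplet via injectivity of the Fourier transform, goes through.
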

The triplet $(X_0, (y_{i,j})_{1\leq i,j \leq N^2}, \Pi)$ is called the \textit{characteristic triplet} of $\mu$, and  its associated generator $L$ is called the generator of $\mu$. Conversely, given such a triplet $(X_0, (y_{i,j})_{1\leq i,j \leq N^2}, \Pi)$, there exists a unique infinitely divisible  measure $\mu$ whose generator is given by \eqref{genu}.

Let us remark that the functions $\e$ and $\sin$ make sense on $\mathcal{H}_N$. For all $x\in \mathcal{H}_N$, we have
$$\e(x)=\exp(ix)\in U(N)\text{ and }\sin(x)=\Im\circ \e=(e^{ix}-e^{-ix})/2i\in \mathcal{H}_N. $$
As previously, for all measure $\Pi$ on $\mathcal{H}_N$, the measure $\e_{\ast} (\Pi)$ denotes the push-forward $\Pi$ on $\mathcal{H}_N$ by the mapping $\e:\mathcal{H}_N\to U(N)$, and the measure $\e_{\ast} (\Pi)_{|U(N)\setminus \{I_N\}}$ is the measure on $U(N)\setminus \{I_N\}$ induced by $\e_{\ast} (\Pi)$. We are now able to formulate the main result of this section.
\begin{propdef}\label{ththree}For all $\mu\in \ID(\mathcal{H}_N,\ast)$ with characteristic triplet $$(X_0, (y_{i,j})_{1\leq i,j \leq N^2}, \Pi),$$ we define $\mathcal{E}_N(\mu)$ to be the measure of $\ID(U(N),\circledast)$ with characteristic triplet $$\left(iX_0+i\int_{\mathcal{H}_N}(\sin(x)-1_B(x) x) \ \Pi(\diff x), (y_{i,j})_{1\leq i,j \leq N^2}, \e_{\ast} (\Pi)_{|U(N)\setminus \{I_N\}}\right).$$ The map $\mathcal{E}_N:\mathcal{ID}(\mathcal{H}_N,\ast)\rightarrow \ID(U(N),\circledast)$ is called the stochastic exponential and has the following properties
:
\begin{enumerate}\item For all $\mu\in \ID(\mathcal{H}_N,\ast)$, the measures $(\e_{\ast}(\mu^{\ast 1/n}))^{\circledast n}$ converge weakly to $\mathcal{E}_N(\mu)$;

\item the stochastic exponential maps $ \ID_{\inv}(\mathcal{H}_N,\ast)$ to $\ID_{\inv}(U(N),\circledast)$, and for all $\mu$, $\nu$ measures of $ \ID_{\inv}(\mathcal{H}_N,\ast)$, we have
$$\mathcal{E}_N(\mu\ast \nu)=\mathcal{E}_N(\mu)\circledast\mathcal{E}_N( \nu) .$$

\end{enumerate}
\end{propdef}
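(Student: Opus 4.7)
The plan is to (a) verify that the triplet defining $\mathcal E_N(\mu)$ is a valid characteristic triplet on $U(N)$, (b) prove the convergence in~(1) by identifying the generator of the limit semigroup via the method of matrix coefficients, and (c) deduce~(2) from uniqueness of generators for invariant convolution semigroups. For well-definedness, note that $X_0\in\mathcal H_N$ and that, for $x\in\mathcal H_N$, both $\sin(x)$ and $1_B(x)\,x$ are Hermitian, so $iX_0+i\int(\sin(x)-1_B(x)\,x)\,\Pi(\diff x)$ lies in $\mathfrak{u}(N)$; the integral converges since $\sin(x)-x=O(\|x\|^3)$ near $0$ (hence $\Pi$-integrable on $B$ by the Lévy condition) while both terms are bounded off $B$. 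The quadratic part is unchanged. Finally $\e_\ast(\Pi)_{|U(N)\setminus\{I_N\}}$ is a Lévy measure on $U(N)$: near $I_N$ one has $\|i\Im(e^{ix})\|^2=\|\sin(x)\|^2\sim\|x\|^2$, which is $\Pi$-integrable on $B$; and for any neighborhood $V$ of $I_N$, by continuity of $\e$ at $0$ there is a neighborhood $W$ of $0$ in $\mathcal H_N$ with $\e(W)\subset V$, whence $\e_\ast(\Pi)(V^c)\le\Pi(W^c)<\infty$ (any Lévy measure on $\mathcal H_N$ has finite mass outside every neighborhood of $0$).

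For~(1), I would test weak convergence against matrix coefficients. Fix $k\ge1$. Setting $F_k(x)=(e^{ix})^{\otimes k}$ and $M_n=\int_{\mathcal H_N}F_k(x)\,\mu^{\ast 1/n}(\diff x)$, the multiplicativity $(gh)^{\otimes k}=g^{\otimes k}h^{\otimes k}$ yields
\[
\int_{U(N)}g^{\otimes k}\,\diff(\e_\ast(\mu^{\ast 1/n}))^{\circledast n}(g)=M_n^{\,n}.
\]
Since $L$ denotes the generator of $(\mu^{\ast t})_{t\ge0}$ on $\mathcal H_N$, one has $n(M_n-I_N^{\otimes k})\to LF_k(0)$, with $L$ applied componentwise. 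Using formula~\eqref{genu} together with the Taylor expansion $e^{ix}=I_N+ix-\tfrac12 x^2+\cdots$, a direct computation identifies $LF_k(0)$ with the operator $d\rho^k_{U(N)}(\tilde L)$ produced by Proposition~\ref{momsemiu} from the triplet defining $\mathcal E_N(\mu)$. The key algebraic move is to rewrite the cutoff $1_B(x)\,x$ (used in the $\mathcal H_N$ generator) as $i\Im(e^{ix})=i\sin(x)$ (used in the $U(N)$ generator), which precisely produces the drift correction $i\int(\sin(x)-1_B(x)\,x)\,\Pi(\diff x)$. From $M_n^n\to\exp(d\rho^k_{U(N)}(\tilde L))=\int g^{\otimes k}\,\diff \mathcal E_N(\mu)(g)$ for every $k$ (with the same argument applied to tensor products involving the dual representation so as to reach all matrix coefficients), density of matrix coefficients in $C(U(N))$ by Peter--Weyl delivers the weak convergence.

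For~(2), invariance under unitary conjugation is preserved: uniqueness of $\ast$-roots makes $\mu^{\ast 1/n}$ invariant, and $\e$ intertwines conjugation, so $\e_\ast(\mu^{\ast 1/n})$ and all its $\circledast$-iterates are invariant, a property passing to the weak limit. For the homomorphism statement, invariant measures commute under $\circledast$: the identity $\int f(gh)\,\mu(\diff g)=\int f(hg)\,\mu(\diff g)$ (obtained by conjugating the integration variable by $h$) gives $\mu\circledast\nu=\nu\circledast\mu$ whenever either factor is invariant. Hence if $\tilde L_\mu,\tilde L_\nu$ denote the generators of the convolution semigroups of $\mathcal E_N(\mu),\mathcal E_N(\nu)$, the family $(\mathcal E_N(\mu)^{\circledast t}\circledast\mathcal E_N(\nu)^{\circledast t})_{t\ge0}$ is itself a convolution semigroup with generator $\tilde L_\mu+\tilde L_\nu$. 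Since the assignment from the $\ast$-triplet of $\mu$ to the $\circledast$-triplet of $\mathcal E_N(\mu)$ is additive in each triplet component, and the generator~\eqref{gen} depends linearly on the triplet, one obtains $\tilde L_{\mathcal E_N(\mu\ast\nu)}=\tilde L_\mu+\tilde L_\nu$, so evaluating both semigroups at $t=1$ yields $\mathcal E_N(\mu\ast\nu)=\mathcal E_N(\mu)\circledast\mathcal E_N(\nu)$. The main obstacle is the computation in~(1) identifying the limit generator: reconciling the ball-cutoff $1_B(x)\,x$ on $\mathcal H_N$ with the sine cutoff $i\sin(x)$ on $U(N)$ is exactly what forces the specific drift correction appearing in the definition of $\mathcal E_N(\mu)$.
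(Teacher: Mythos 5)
Your proposal is correct and takes essentially the same route as the paper: the paper tests weak convergence against the irreducible representations $\pi^\alpha$ (its Fourier transform on $U(N)$), expands $\widehat{\e_\ast(\mu^{\ast 1/n})}(\alpha)$ to first order in $1/n$, and identifies $L_\mu(\pi^\alpha\circ\e)(0)$ with $L_{\mathcal{E}_N(\mu)}\pi^\alpha(I_N)$ via exactly the $\sin(x)$ versus $1_B(x)\,x$ rewriting you describe, which is equivalent to your tensor-power-plus-dual and Peter--Weyl formulation. For item (2) the paper likewise reduces to conjugation-invariance of $\mu^{\ast t}$ (proved there via invariance of the Lévy exponent rather than your uniqueness-of-convolution-roots argument) together with additivity of characteristic triplets for invariant semigroups (its Corollary~\ref{sumwc}), so your variations are only cosmetic.
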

The tool used to prove this proposition is the Fourier transform of a measure on $U(N)$. Before proving Proposition~\ref{ththree} in Section~\ref{proofdef}, let us introduce this notion. 

\subsection{Fourier transform on $U(N)$}The set $\widehat{U(N)}$ of isomorphism classes of irreducible representations of $U(N)$ is in bijection with the set $\mathbb{Z}^N_{\downarrow}$ of non-increasing sequences of integers $\alpha=(\alpha_1\geq \ldots \geq \alpha_N)$. For all $\alpha \in \mathbb{Z}^N_{\downarrow}$, let $\pi^\alpha \in \widehat{U(N)}$ be a unitary representation in the corresponding class, acting on a vector space $E_\alpha$, and let $\chi_{\alpha}$ be its character, that is to say the function $\Tr\circ \pi^\alpha$. We will also consider the normalized character $\psi_\alpha(\cdot)=\chi_{\alpha}(\cdot)/\chi_{\alpha}(I_N)$.

Let $\mu$ be a probability measure on $U(N)$. The Fourier transform $\widehat{\mu}$ of $\mu$ is defined for all $\alpha \in \mathbb{Z}^N_{\downarrow}$ by $\widehat{\mu}(\alpha)=\int_{U(N)}\pi^\alpha(g)\ \mu(\diff g)\in \End( E_\alpha)$. Here are three properties of the Fourier transform.
\begin{enumerate}
\item 
For all probability measures $\mu$ and $\nu$, and for all $\alpha \in \mathbb{Z}^N_{\downarrow}$ we have $\widehat{\mu\circledast \nu}(\alpha)=\widehat{\mu}(\alpha)\widehat{\nu}(\alpha)$.
\item A sequence of probability measures $(\mu_n)_{n\in \mathbb{N}}$ converges weakly to a measure $\mu$ if and only if for all $\alpha \in \mathbb{Z}^N_{\downarrow}$, the sequence $(\widehat{\mu_n}(\alpha))_{n\in \mathbb{N}}$ converges to $\widehat{\mu}(\alpha)$.
\item A probability measure $\mu$ is central, or conjugate invariant, if and only if for all $\alpha \in \mathbb{Z}^N_{\downarrow}$, 
$\widehat{\mu}(\alpha)$ is a homogeneous dilation, and in this case $\widehat{\mu}(\alpha)=(\int_{U(N)}\psi_{\alpha}(g)\ \mu(\diff g))\Id_{ E_\alpha}$.
\end{enumerate}
The following proposition gives the Fourier transform of a measure arising from a convolution semigroup.

\begin{proposition}
Let $(\mu_t)_{t\in \mathbb{R}^+}$ be a weakly continuous convolution semigroup on $U(N)$ starting at $\mu_0=\delta_e$ with generator $L$. For all $t\geq 0$, and all $\alpha \in \mathbb{Z}^N_{\downarrow}$, we have $\widehat{\mu_t}(\alpha)=e^{ tL\pi^\alpha(I_N)}$. Moreover, if $\mu$ is conjugate invariant, we have $\widehat{\mu_t}(\alpha)=e^{tL\psi_{\alpha}(I_N)}\Id_{E_\alpha}$.\label{invinvin}
\end{proposition}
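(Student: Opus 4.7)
The plan is to exploit the fact that $\pi^\alpha:U(N)\to \End(E_\alpha)$ is a group homomorphism, which turns the Fourier transform $F(t):=\widehat{\mu_t}(\alpha)=\int_{U(N)}\pi^\alpha(g)\,\mu_t(\diff g)$ into a $C^2$ semigroup of endomorphisms of $E_\alpha$. Indeed, using the convolution semigroup property $\mu_{t+s}=\mu_t\circledast\mu_s$ and the fact that $\pi^\alpha(gh)=\pi^\alpha(g)\pi^\alpha(h)$, I would check
\[
F(t+s)=\int\!\!\int \pi^\alpha(gh)\,\mu_t(\diff g)\mu_s(\diff h)=F(t)F(s),
\]
and $F(0)=\pi^\alpha(I_N)=\Id_{E_\alpha}$. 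So $F$ is a one-parameter subgroup of $\mathrm{GL}(E_\alpha)$, and the result will follow once $F'(0)=L\pi^\alpha(I_N)$ is identified.

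The identification of $F'(0)$ is the first concrete computation. Applying each matrix coefficient $\pi^\alpha_{ij}\in C^\infty(U(N))\subset C^2(U(N))$ to the transition semigroup, I would write
\[
P_t\pi^\alpha_{ij}(h)=\int_{U(N)}\pi^\alpha_{ij}(hg)\,\mu_t(\diff g)=\sum_k \pi^\alpha_{ik}(h)\,\bigl(\widehat{\mu_t}(\alpha)\bigr)_{kj},
\]
so that evaluation at $h=I_N$ gives $P_t\pi^\alpha(I_N)=F(t)$. Dividing by $t$ and letting $t\to 0$ yields $F'(0)=L\pi^\alpha(I_N)$ entrywise, using that matrix coefficients are in the domain of $L$ since they are smooth. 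Combining this with the semigroup relation $F(t+s)=F(s)F(t)$, differentiating in $s$ at $0$ gives $F'(t)=L\pi^\alpha(I_N)\cdot F(t)$ (and equally $F(t)\cdot L\pi^\alpha(I_N)$), whose unique solution with $F(0)=\Id_{E_\alpha}$ is $F(t)=\exp(tL\pi^\alpha(I_N))$.

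For the second assertion, I would use property~(3) of the Fourier transform recalled just before the statement: when $\mu_t$ is conjugate invariant, $\widehat{\mu_t}(\alpha)$ is a homothety, explicitly $\widehat{\mu_t}(\alpha)=\bigl(\int_{U(N)}\psi_\alpha\,\diff\mu_t\bigr)\Id_{E_\alpha}$. Differentiating this identity at $t=0$ gives $L\pi^\alpha(I_N)=L\psi_\alpha(I_N)\cdot \Id_{E_\alpha}$, so the exponential collapses to $\widehat{\mu_t}(\alpha)=e^{tL\psi_\alpha(I_N)}\Id_{E_\alpha}$. No step is truly hard here; the only delicate point is the differentiation under the integral sign and the justification that matrix coefficients of $\pi^\alpha$ lie in the domain of $L$, both of which follow from the smoothness of $\pi^\alpha$ and the explicit form~\eqref{gen} of $L$ on $C^2(U(N))$.
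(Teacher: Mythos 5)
Your proposal is correct and follows essentially the same route as the paper: both identify $\frac{\diff}{\diff t}\big|_{t=0}\widehat{\mu_t}(\alpha)=L\pi^\alpha(I_N)$ from the definition of the generator applied to the (smooth) matrix coefficients, and then use the multiplicativity $\widehat{\mu_{t+s}}(\alpha)=\widehat{\mu_t}(\alpha)\widehat{\mu_s}(\alpha)$ to globalize — you via the ODE $F'(t)=L\pi^\alpha(I_N)F(t)$, the paper via $\widehat{\mu_t}(\alpha)=\lim_{s\to 0}\widehat{\mu_s}(\alpha)^{t/s}$, which is only a cosmetic difference. The treatment of the conjugate-invariant case (replacing $\pi^\alpha$ by $\psi_\alpha$ through the homothety property of the Fourier transform) also matches the paper's argument.
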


\begin{proof}For all $\alpha \in \mathbb{Z}^N_{\downarrow}$, we have
$\widehat{\mu_t}(\alpha)=\int_{U(N)}\pi^\alpha(g)\ \mu_t(\diff g)= \Id_{E_\alpha}+t\cdot L\pi^\alpha(I_N)+o_{t\rightarrow0}(t)$,
which implies that
$\widehat{\mu_t}(\alpha)=\lim_{s\rightarrow 0} \widehat{\mu_{s}}(\alpha)^{t/s}=e^{t L\pi^\alpha(I_N)}.$
If $\mu$ is conjugate invariant, then, for all $t\in \mathbb{R}^+$, $\mu_t$ is conjugate invariant, and we can replace $\pi^\alpha$ by $\psi_\alpha$ in the previous computation.
\end{proof}

\begin{corollary}Let $(\mu_t)_{t\in \mathbb{R}^+}$ and $(\nu_t)_{t\in \mathbb{R}^+}$ be two weakly continuous conjugate invariant convolution semigroups on $U(N)$ starting at $\mu_0=\delta_e$, with respective characteristic triplets $$(Y_0, (y_{i,j})_{1\leq i,j \leq N^2}, \Pi)\text{ and }(Y_0', (y_{i,j}')_{1\leq i,j \leq N^2}, \Pi').$$ Then, $(\mu_t\circledast \nu_t)_{t\in \mathbb{R}^+}$ is a weakly continuous convolution semigroup \label{sumwc}on $U(N)$ starting at $\mu_0=\delta_e$, with characteristic triplet $$(Y_0+Y_0', (y_{i,j}+y_{i,j}')_{1\leq i,j \leq N^2}, \Pi+\Pi').$$
\end{corollary}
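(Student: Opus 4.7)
The plan is to establish the result via the Fourier transform, exploiting the fact that for a conjugate invariant measure, $\widehat{\mu}(\alpha)$ is a scalar multiple of the identity, hence commutes with every other such Fourier transform.

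First I would check that $(\mu_t \circledast \nu_t)_{t \in \R_+}$ is again a conjugate invariant, weakly continuous convolution semigroup starting at $\delta_e$. Conjugate invariance is preserved by convolution (a direct Fubini computation using the invariance of both factors), so $\mu_t \circledast \nu_t \in \ID_\inv(U(N), \circledast)$. By Proposition~\ref{invinvin}, both $\widehat{\mu_t}(\alpha)$ and $\widehat{\nu_t}(\alpha)$ are scalars times $\Id_{E_\alpha}$ for every $\alpha \in \Z^N_\downarrow$, hence commute in $\End(E_\alpha)$; equivalently, $\mu_t \circledast \nu_t = \nu_t \circledast \mu_t$. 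Combining this commutation with the semigroup properties of $(\mu_t)$ and $(\nu_t)$ yields
$$
(\mu_s \circledast \nu_s) \circledast (\mu_t \circledast \nu_t) = \mu_s \circledast \mu_t \circledast \nu_s \circledast \nu_t = \mu_{s+t} \circledast \nu_{s+t},
$$
and weak continuity transfers from each factor to the convolution.

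Next I would identify the characteristic triplet. Let $L$ and $L'$ be the generators associated with the triplets $(Y_0, (y_{i,j}), \Pi)$ and $(Y_0', (y_{i,j}'), \Pi')$. The formula \eqref{gen} is manifestly $\R$-linear in the triplet, so the generator attached to the summed triplet $(Y_0+Y_0', (y_{i,j}+y_{i,j}'), \Pi+\Pi')$ is exactly $L+L'$. Denote by $(\eta_t)_{t\in \R_+}$ the (unique) weakly continuous conjugate invariant convolution semigroup on $U(N)$ whose generator is $L+L'$; by Proposition~\ref{invinvin}, its Fourier transform is
$$
\widehat{\eta_t}(\alpha) = e^{t(L+L')\psi_\alpha(I_N)} \Id_{E_\alpha}.
$$
On the other hand, using that $\widehat{\mu_t}(\alpha)$ and $\widehat{\nu_t}(\alpha)$ are commuting scalars,
$$
\widehat{\mu_t \circledast \nu_t}(\alpha) = \widehat{\mu_t}(\alpha) \widehat{\nu_t}(\alpha) = e^{tL\psi_\alpha(I_N)} e^{tL'\psi_\alpha(I_N)} \Id_{E_\alpha} = e^{t(L+L')\psi_\alpha(I_N)} \Id_{E_\alpha}.
$$
Since a probability measure on $U(N)$ is determined by its Fourier transform, $\mu_t \circledast \nu_t = \eta_t$, so the characteristic triplet of $(\mu_t \circledast \nu_t)_{t \in \R_+}$ is indeed $(Y_0+Y_0', (y_{i,j}+y_{i,j}'), \Pi+\Pi')$.

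The only genuinely substantive step is the commutation of $\widehat{\mu_t}(\alpha)$ and $\widehat{\nu_t}(\alpha)$, which is what allows the product of matrix exponentials to be combined into a single exponential; without conjugate invariance this would fail. All other ingredients (linearity of \eqref{gen} in the triplet, uniqueness of the semigroup associated with a generator, and determination of measures by their Fourier transforms) are recorded in the previous propositions.
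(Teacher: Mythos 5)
Your proof is correct and follows essentially the same route as the paper: compute $\widehat{\mu_t\circledast\nu_t}(\alpha)$ via Proposition~\ref{invinvin}, use that the normalized-character exponentials are scalars so they combine into $e^{t(L+L')\psi_\alpha(I_N)}$, and conclude by uniqueness of the measure with that Fourier transform, the summed triplet giving the generator $L+L'$ by linearity of~\eqref{gen}. The only extra content in your write-up (explicitly verifying the semigroup property and commutation) is already implicit in the paper's identification of $\mu_t\circledast\nu_t$ with the semigroup attached to the summed triplet.
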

\begin{proof}Remark that $ (y_{i,j}+y_{i,j}')_{1\leq i,j \leq N^2}$ is a symmetric positive semidefinite matrix and that $\Pi+\Pi'$ is a Lévy measure.  Let $L$ and $L'$ be the respective generators of $(\mu_t)_{t\in \mathbb{R}^+}$ and $(\nu_t)_{t\in \mathbb{R}^+}$ given by~\eqref{gen}. Thanks to Proposition~\ref{invinvin} and to the conjugation invariance, for all $\alpha \in \mathbb{Z}^N_{\downarrow}$, we have
$$\widehat{\mu_t\ast \nu_t}(\alpha)= \widehat{\mu_t}(\alpha)\cdot \widehat{\nu_t}(\alpha)=e^{tL\psi_{\alpha}(I_N)}e^{tL'\psi_{\alpha}(I_N)}\Id_{E_\alpha}=e^{t(L+L')\psi_{\alpha}(I_N)}\Id_{E_\alpha}$$
To conclude, observe that, for each time $t\in \R_+$, the measure at time $t$ of the weakly continuous semigroup whose characteristic triplet is $(Y_0+Y_0', (y_{i,j}+y_{i,j}')_{1\leq i,j \leq N^2}, \Pi+\Pi')$ has the same Fourier transform as $\mu_t\circledast \nu_t$.
\end{proof}

\begin{lemma}Let $\mu$ and $\nu \in \ID_\inv(U(N),\circledast)$ with characteristic triplet $(Y_0, (y_{i,j})_{1\leq i,j \leq N^2}, \Pi)$ and $(Y_0', (y_{i,j}')_{1\leq i,j \leq N^2}, \Pi')$. Then, $(Y_0+Y_0', (y_{i,j}+y_{i,j}')_{1\leq i,j \leq N^2}, \Pi+\Pi')$ is a characteristic triplet of $\mu\circledast\nu$. In particular, for all $k\in \Z$,
$(Y_0+2ik\pi I_N, (y_{i,j})_{1\leq i,j \leq N^2}, \Pi)$ is also a characteristic triplet of $\mu$.\label{tripletpi}
\end{lemma}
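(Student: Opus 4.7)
The first assertion follows directly from Corollary~\ref{sumwc}. By hypothesis, $\mu$ and $\nu$ admit the stated characteristic triplets, meaning that there exist weakly continuous convolution semigroups $(\mu_t)_{t\in\R_+}$ and $(\nu_t)_{t\in\R_+}$ starting at $\delta_e$, with $\mu_1=\mu$ and $\nu_1=\nu$, whose characteristic triplets are respectively $(Y_0,(y_{i,j}),\Pi)$ and $(Y_0',(y_{i,j}'),\Pi')$. Since $\mu_t$ and $\nu_t$ are conjugate invariant for every $t\geq 0$ (by the conjugate invariance of $\mu$ and $\nu$ and the uniqueness of the embedding semigroups given by the Fourier inversion), Corollary~\ref{sumwc} applies and yields that $(\mu_t\circledast \nu_t)_{t\in\R_+}$ is a weakly continuous convolution semigroup starting at $\delta_e$ with characteristic triplet $(Y_0+Y_0',(y_{i,j}+y_{i,j}'),\Pi+\Pi')$. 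Evaluating this semigroup at $t=1$ gives $\mu\circledast\nu$, so that $(Y_0+Y_0',(y_{i,j}+y_{i,j}'),\Pi+\Pi')$ is indeed one of its characteristic triplets.

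For the second assertion, I would exhibit a non-trivial characteristic triplet for the neutral element $\delta_e$ itself. Fix $k\in\Z$ and set $Y_k=2ik\pi I_N\in\mathfrak{u}(N)$ (it lies in $\mathfrak{u}(N)$ since $Y_k^*+Y_k=0$). The one-parameter family $(\delta_{\exp(tY_k)})_{t\in\R_+}$ is a weakly continuous convolution semigroup starting at $\delta_{I_N}=\delta_e$, and its generator is the left-invariant vector field $Y_k^l$, so its characteristic triplet is $(Y_k,0,0)=(2ik\pi I_N,0,0)$. Moreover, at time $t=1$ it equals $\delta_{\exp(2ik\pi I_N)}=\delta_{I_N}=\delta_e$. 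Hence $\delta_e$ admits $(2ik\pi I_N,0,0)$ as a characteristic triplet, for every $k\in\Z$.

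Combining the two, since $\mu=\mu\circledast \delta_e$, the first part applied to $\mu$ and $\delta_e$ (with the triplets $(Y_0,(y_{i,j}),\Pi)$ and $(2ik\pi I_N,0,0)$ respectively) shows that $(Y_0+2ik\pi I_N,(y_{i,j}),\Pi)$ is also a characteristic triplet of $\mu$, which is what we wanted. The only subtle point in the whole argument is to invoke Corollary~\ref{sumwc} correctly, namely that the convolution of two conjugate-invariant convolution semigroups is again a convolution semigroup whose characteristic triplet sums; this is exactly what was established via the Fourier transform on $U(N)$, so there is nothing further to verify.
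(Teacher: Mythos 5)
Your proposal is correct and follows essentially the same route as the paper: the first assertion is obtained from Corollary~\ref{sumwc}, and the second by observing that $(\delta_{\exp(2ik\pi t I_N)})_{t\in\R_+}$ is a weakly continuous (conjugate invariant, since $e^{2ik\pi t}I_N$ is central) convolution semigroup with characteristic triplet $(2ik\pi I_N,0,0)$ whose time-one measure is $\delta_e$, so that the first part applied to $\mu\circledast\delta_e=\mu$ gives the shifted triplet. Your extra remark about the conjugate invariance of the embedding semigroups is a fair point of care, treated no more explicitly in the paper itself.
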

\begin{proof}
The first assertion follows from Corollary~\ref{sumwc}. For the second assertion, we remark that $ (\delta_{e^{2ikt\Pi}I_N})_{t\in \mathbb{R}^+}$ is a weakly continuous convolution semigroup with characteristic triplet $(2ik\pi, 0,0)$. By consequence,  $(Y_0+2ik\pi I_N, (y_{i,j})_{1\leq i,j \leq N^2}, \Pi)$ is a characteristic triplet of $\mu\circledast \delta_{e^{2ik\pi}I_N}=\mu$.
\end{proof}

We are now ready to prove Proposition-Definition~\ref{ththree}.

\subsection{Proof of Proposition-Definition~\ref{ththree}}\label{proofdef}First of all, we remark that the sine function is bounded and $\sin(x)- x\sim_{x\rightarrow 0} x^3/6$, which implies that $\int_{\mathcal{H}_N}(\sin(x)-1_B(x) x) \ \Pi(\diff x)$ exists.

We start by proving the first item. Let $\mu\in \ID(\mathcal{H}_N,\ast)$. Let us denote by $L_{\mu}$ the generator of $\mu$ and by $L_{\mathcal{E}_N(\mu)}$ the generator of $\mathcal{E}_N(\mu)$. Let $\alpha \in \mathbb{Z}^N_{\downarrow}$. We have $$\widehat{ \e_{\ast} (\mu^{*\frac{1}{n}})}(\alpha)=\int_{\mathcal{H}_N}\pi^\alpha(\e(x))\ \mu^{\ast 1/n}(\diff x)= \Id_{E_\alpha}+ L_{\mu}(\pi^\alpha\circ \e)(0)/n+o_{n\rightarrow \infty}(1/n),$$ which implies that $\lim_{n\rightarrow \infty} \widehat{(\e_\ast (\mu^{*\frac{1}{n}}))^{\circledast n}}(\alpha)=\lim_{n\rightarrow \infty} \left(\widehat{\e_{\ast} (\mu^{*\frac{1}{n}})}(\alpha)\right)^n=e^{L_{\mu}(\pi^\alpha\circ \e)(0)}$. Let us compute
\begin{align*}L_{\mu}(\pi^\alpha\circ \e)(0)=&\partial_{X_0}(\pi^\alpha\circ \e)(0)+\frac{1}{2}\displaystyle\sum_{i,j=1}^{N^2}y_{i,j}\partial_{X_i}\partial_{X_j} (\pi^\alpha\circ \e)(0)\\
 &+\int_{\mathcal{H}_N}\pi^\alpha(e^{i(x+0)})-\pi^\alpha(e^{i0})-  1_B(x)\partial_{x} (\pi^\alpha\circ \e)(0)\ \Pi(\diff x) .
 \end{align*}
Recall that, for all $Y\in \frak{u}(N)$, $Y^l$ is the left invariant vector field on $U(N)$ induced by $Y$. Using the fact that, for all $x\in \mathcal{H}_N$, $\partial_{x} (\pi^\alpha\circ \e)(0)=\left.\frac{\diff}{\diff t}\right|_{t=0}\pi^\alpha(e^{itx})=(ix)^l \pi^\alpha(I_N)$, we infer
\begin{align*}L_{\mu}(\pi^\alpha\circ \e)(0)=&(iX_0)^l(\pi^\alpha)(I_N)+\frac{1}{2}\displaystyle\sum_{i,j=1}^{N^2}y_{i,j}Y_i^lY_j^l(\pi^\alpha)(I_N) \\
 &+\int_{\mathcal{H}_N}\pi^\alpha(e^{ix})-\Id_{E_\alpha}- 1_B(x) (ix)^l \pi^\alpha(I_N)\ \Pi(\diff x) \\
%
=&(iX_0)^l(\pi^\alpha)(I_N)+\int_{\mathcal{H}_N} (i\sin(x)-i1_B(x) x)^l \pi^\alpha(I_N)\ \Pi(\diff x) \\
&+\frac{1}{2}\displaystyle\sum_{i,j=1}^{N^2}y_{i,j}Y_i^lY_j^l(\pi^\alpha)(I_N) +\int_{\mathcal{H}_N}\pi^\alpha(\e(x))-\Id_{E_\alpha}-\left(i \Im(\e(x))\right)^l\pi^\alpha(I_N)  \ \Pi(\diff x)\\
=&L_{\mathcal{E}_N(\mu)}\pi^\alpha(I_N).
\end{align*}
Finally, for all $\alpha \in \mathbb{Z}^N_{\downarrow}$, the sequence $\widehat{(\e_\ast (\mu^{*\frac{1}{n}}))^{\ast n}}(\alpha)$ converges to $e^{L_{\mathcal{E}_N(\mu)}\pi^\alpha(I_N)} =\widehat{\mathcal{E}_N(\mu)}(\alpha)$ and consequently the sequence $(\e_{\ast} (\mu^{*\frac{1}{n}}))^{\circledast n}$ converges to $\mathcal{E}_N(\mu)$.

For the proof of the second item, we use the Fourier transform of a measure in $ \ID(\mathcal{H}_N,\ast)$, which is given by the following proposition.
\begin{proposition}[\cite{Sato1999}]Let $\mu\in \ID(\mathcal{H}_N,\ast)$ with characteristic triplet $(X_0, (y_{i,j})_{1\leq i,j \leq N^2}, \Pi)$. We have $\int_{\mathcal{H}_N} e^{i\Tr(xy)} \mu^{\ast t}(\diff x)=\exp\left( t \varphi_\mu(y)\right)$ with\label{Fourier}
$$\varphi_\mu(y)=i\Tr(X_0y)-\frac{1}{2}\displaystyle\sum_{i,j=1}^{N^2}y_{i,j}\Tr(X_iy)\Tr(X_jy)+\int_{\mathcal{H}_N}e^{i\Tr(xy)}-1-i 1_B(x)\Tr(xy)\ \Pi(\diff x).  $$
\end{proposition}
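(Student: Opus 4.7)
The plan is to derive the formula directly from the generator description of $(\mu^{\ast t})_{t\geq 0}$ given in Section~\ref{secfivebis}. Fix $y\in\mathcal{H}_N$ and consider the test function $f_y:\mathcal{H}_N\to\C$ defined by $f_y(x)=e^{i\Tr(xy)}$. Since $\partial_{X_i}f_y(x)=i\Tr(X_iy)e^{i\Tr(xy)}$ and similarly for the second derivatives, $f_y$ is bounded with bounded first and second derivatives, so $f_y\in C^2_b(\mathcal{H}_N)$ and hence lies in the domain of the generator $L$ of~\eqref{genu}. Alternatively, one can reduce to the classical $\R^{N^2}$-valued Lévy--Khintchine formula of~\cite{Sato1999} via the isometry $\mathcal{H}_N\cong\R^{N^2}$ provided by the orthonormal basis $(X_1,\dots,X_{N^2})$, but the intrinsic route below is self-contained.

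Set $F(t)=\int_{\mathcal{H}_N}f_y(x)\,\mu^{\ast t}(\diff x)$. The additivity $f_y(x_1+x_2)=f_y(x_1)f_y(x_2)$ together with the convolution semigroup property $\mu^{\ast(s+t)}=\mu^{\ast s}\ast\mu^{\ast t}$ gives $F(s+t)=F(s)F(t)$. Combined with $F(0)=1$ and the weak continuity of $t\mapsto\mu^{\ast t}$, this forces $F(t)=\exp(t\psi(y))$ for some $\psi(y)\in\C$, and by the very definition of the generator $\psi(y)=F'(0)=Lf_y(0)$.

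The remaining step is the explicit computation $Lf_y(0)=\varphi_\mu(y)$. Each of the three summands in~\eqref{genu} contributes exactly one term of $\varphi_\mu(y)$: the drift term yields $\partial_{X_0}f_y(0)=i\Tr(X_0y)$; the quadratic term yields $\partial_{X_i}\partial_{X_j}f_y(0)=-\Tr(X_iy)\Tr(X_jy)$; and the jump term yields $\int_{\mathcal{H}_N}(e^{i\Tr(xy)}-1-i\,1_B(x)\Tr(xy))\,\Pi(\diff x)$. The jump integral is absolutely convergent thanks to the second-order Taylor estimate $|e^{i\Tr(xy)}-1-i\Tr(xy)|\leq\tfrac{1}{2}\|y\|^2\|x\|^2$ on the unit ball $B$, which is $\Pi$-integrable by the Lévy measure condition $\int_B\|x\|^2\,\Pi(\diff x)<\infty$, combined with the uniform bound $|e^{i\Tr(xy)}-1|\leq 2$ on $B^c$ against the finite measure $\Pi_{|B^c}$.

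The one technical point deserving care -- and the expected main obstacle -- is justifying the interchange of the time-derivative and the spatial integration that identifies $F'(0)$ with $Lf_y(0)$. This is not a priori automatic: it requires a uniform-in-$t$ bound on $(F(t)-F(0)-tLf_y(0))/t$ near $t=0$. The bound follows from the same dominated convergence estimates used above together with the standard regularity of the semigroup $(\mu^{\ast t})_{t\geq 0}$ on $C^2_b$ guaranteed by the generator theory invoked in~\cite{Sato1999,Liao2004}; once this interchange is established the desired formula $\int_{\mathcal{H}_N}e^{i\Tr(xy)}\,\mu^{\ast t}(\diff x)=\exp(t\varphi_\mu(y))$ is immediate.
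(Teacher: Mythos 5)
Your proof is correct. Note, however, that the paper does not prove this proposition at all: it is quoted directly from \cite{Sato1999}, being the finite-dimensional Lévy--Khintchine formula read on the Euclidean space $(\mathcal{H}_N,\langle\cdot,\cdot\rangle_{\mathcal{H}_N})$ through the orthonormal basis $(X_1,\dots,X_{N^2})$ --- exactly the reduction you mention in passing and then set aside. Your intrinsic derivation from the generator formula \eqref{genu} is a legitimate alternative and fits the paper's setup well: the multiplicativity $f_y(x_1+x_2)=f_y(x_1)f_y(x_2)$ turns the convolution semigroup into the scalar Cauchy equation $F(s+t)=F(s)F(t)$; continuity and non-vanishing of $F$ (from $F(t)=F(t/n)^n$ and $F(0)=1$, allowing a continuous branch of the logarithm) give $F(t)=e^{t\psi(y)}$; and $\psi(y)=Lf_y(0)=\varphi_\mu(y)$ by the direct computation you carry out. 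One remark: the ``technical point'' you flag in your last paragraph is not actually an extra obstacle. The theorem quoted just before the proposition asserts that for every $f\in C^2_b(\mathcal{H}_N)$ the limit $\lim_{t\to 0}(P_tf-f)/t$ exists and equals the right-hand side of \eqref{genu}; applied to $f_y$ at the point $0$, and using $P_tf_y(0)=F(t)$, this is precisely the statement $F'(0)=Lf_y(0)=\varphi_\mu(y)$, so no further interchange of limits needs to be justified beyond what the cited generator theorem already provides. The integrability estimates you give for the jump term (second-order Taylor bound on $B$, boundedness on $B^c$) are the right ones and complete the argument.
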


Let $\mu\in  \ID_{\inv}(\mathcal{H}_N,\ast)$. We claim that, for all $t\geq 0$, $\mu^{\ast t}\in \ID_{\inv}(\mathcal{H}_N,\ast)$. Assuming for a moment that this claim is proved, let us explain how it leads to the result: in this case, each measure $(\e_{\ast}(\mu^{\ast 1/n}))^{\circledast n}$ is conjugate invariant and so is the limit $\mathcal{E}_N(\mu)$. In addition, for all $\mu$, $\nu\in \ID_{\inv}(\mathcal{H}_N,\ast)$, the characteristic triplets of $\mathcal{E}_N(\mu\ast \nu)$ and of $\mathcal{E}_N(\mu)\circledast\mathcal{E}_N( \nu)$ coincide thanks to Corollary~\ref{sumwc}, and thus $\mathcal{E}_N(\mu\ast \nu)=\mathcal{E}_N(\mu)\circledast\mathcal{E}_N( \nu) .$

Thus, it remains to prove that, for all $t\geq 0$, $\mu^{\ast t}\in \ID_{\inv}(\mathcal{H}_N,\ast)$. For this, we prove that the Fourier transform of $\mu^{\ast t}$ is conjugate invariant. 
Firstly, $\varphi_\mu$ is conjugate invariant. Indeed, for all $g\in U(N)$, we have
$$\exp\circ \varphi_\mu(gyg^*)=\int_{\mathcal{H}_N}e^{i\Tr(xgyg^*)} \diff\mu(x)=\int_{\mathcal{H}_N}e^{i\Tr(g^*xgy)} \diff\mu(x)=\int_{\mathcal{H}_N}e^{i\Tr(xy)} \diff\mu(x)=\exp\circ \varphi_\mu(y).$$
We deduce that $\varphi_\mu$ is conjugate invariant since it is continuous and $\exp\circ \varphi_\mu$ is conjugate invariant.
Consequently, $\int_{\mathcal{H}_N}e^{i\Tr(xg\cdot g^*)} \diff\mu^{\ast t}(x)=\exp(t\varphi_\mu(\cdot))$ is conjugate invariant, which is sufficient to conclude.

\section{Random matrices}\label{secsix}
In this last section, we shall define the mappings $\Pi_N$ and $\Gamma_N$. Then we prove Theorem~\ref{convwe}, and in particular our main result, the weak convergence in expectation of the empirical spectral measures of random matrices distributed over $\Gamma_N(\mu)$ for some $\mu\in \mathcal{ID}(\mathbb{U},\boxtimes)$ (see Theorem~\ref{thsixp}). We finish the section by the proof of Theorem~\ref{whole}.


\subsection{The matrix model $\Pi_N$}\label{pin}Recall that the covariance matrix, which corresponds to the diffuse part of an infinitely divisible measure, depends on the choice of a basis of $\mathcal{H}_N$ (see Section~\ref{secfivebis}). In this article, we fixed an orthonormal basis $\left\{X_1,\ldots,X_{N^2}\right\}$ of $\mathcal{H}_N$ such that $X_{N^2}=\frac{1}{\sqrt{N}}I_N$.
\begin{definition}
Let $\mu\in \ID(\R,\boxplus)$ and let $(\eta,a,\rho)$ be its \mbox{$\boxplus$-characteristic} triplet. The distribution $\Pi_N(\mu)\in \ID_{\inv}(\mathcal{H}_N,\ast)$ is defined to be the infinitely divisible measure with characteristic triplet $\left(\eta I_N, a_N,  \rho_N\right)$, where $a_N$ is the $N^2\times N^2$-matrix
$$a_N=\left(\begin{array}{cccc}\frac{a}{N+1} &  &  & 0 \\ & \ddots &  &  \\ &  & \frac{a}{N+1} &  \\0 &  &  & a\end{array}\right) ,$$ and $ \rho_N$ is the Lévy measure on $\mathcal{H}_N$ which is the push-forward measure of $N \rho\otimes \Haar$ by the mapping from $\R\times U(N)$ to $\mathcal{H}_N$ defined by $$(x,g)\mapsto g\left(\begin{array}{cccc}x & 0 & \cdots & 0 \\0 & 0 & \ddots & \vdots \\\vdots & \ddots & \ddots & 0 \\0 & \cdots & 0 & 0\end{array}\right)g^*.$$
\end{definition}
%
The application $\Pi_N:\ID(\R,\boxplus)\to \ID_{\inv}(\mathcal{H}_N,\ast)$ is obviously a homomorphism of semigroups and we have $\Pi_1=\Lambda^{-1}$. Moreover, $\Pi_N$ is a matricial model for $\ID(\R,\boxplus)$ in the sense of the following theorem.
%
%
%
\begin{theorem}[\cite{Benaych-Georges2005,Cabanal-Duvillard2005}]Let $\mu\in \mathcal{ID}(\R,\boxplus)$. For all $N\in \N^*$, let $H_N$ be a random matrix whose law is $\Pi_N(\mu)$, and let $\hat{\mu}_{H_N}$ be its empirical spectral measure, that is to say\label{thsix}
$$\hat{\mu}_{H_N}=\frac{1}{N}\sum_{\substack{\textup{eigenvalue }\lambda\textup{ of }H_N\\ \textup{(with multiplicity)} }}\delta_{\lambda} .$$
Then, the measures $\hat{\mu}_{H_N}$ converge weakly to $\mu$ in probability when $N$ tends to $\infty$.

\end{theorem}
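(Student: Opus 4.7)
The plan is to exploit the semigroup morphism property of $\Pi_N$ combined with Voiculescu's asymptotic freeness of independent unitarily invariant random matrices, reducing the problem to three well-understood convergences.

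First, by continuity arguments (tightness of $\hat{\mu}_{H_N}$ follows from a uniform second moment bound, which is linear in the $\boxplus$-triplet data), it suffices to prove the theorem when the Lévy measure $\rho$ is finitely supported, a general $\mu$ being approximated through its $\boxplus$-triplet. For such $\mu$ with triplet $(\eta,a,\sum_i \lambda_i \delta_{x_i})$, the additivity of $\ast$-characteristic triplets on $\mathcal{H}_N$ and the semigroup morphism property of $\Pi_N$ yield a decomposition in distribution
\begin{equation*}
H_N \;=\; \eta I_N + G_N + \sum_i P_N^{(i)},
\end{equation*}
where $G_N$ and the $(P_N^{(i)})_i$ are mutually independent: $G_N$ is Gaussian on $\mathcal{H}_N$ with covariance $a_N$, and each $P_N^{(i)}$ is a compound Poisson random matrix whose jump law is the distribution of $x_i\, g E_{11} g^*$ with $g$ Haar-distributed on $U(N)$, with intensity $N\lambda_i$.

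Next I would handle each summand in turn. A direct calculation identifies $G_N$ as a GUE matrix with the precise normalization for which Wigner's theorem gives $\hat{\mu}_{G_N}\to \mathcal{S}_a$ in probability; indeed, the block-diagonal form of $a_N$ was engineered so that $G_N - (\Tr G_N/N)I_N$ is isotropic GUE of variance $a/(N+1)$ in the directions of $\mathfrak{su}(N)$, the scalar direction contributing an $O(1/\sqrt{N})$ correction. For each $P_N^{(i)}$, I would rewrite it as $x_i\, U_N^{(i)}\, \Pi_{K_i^{(N)}}\, (U_N^{(i)})^*$, where $\Pi_k$ denotes the rank-$k$ diagonal projection and $K_i^{(N)}\sim \Poiss^*_{N\lambda_i}$; the ratio $K_i^{(N)}/N$ concentrates at $\lambda_i$, and a classical random matrix computation yields $\hat{\mu}_{P_N^{(i)}}\to \Poiss^{\boxplus}_{\lambda_i,\delta_{x_i}}$ in probability. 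Since the laws of $G_N$ and of each $P_N^{(i)}$ are invariant under unitary conjugation and the matrices are independent, Voiculescu's asymptotic freeness theorem implies that the family $(G_N, (P_N^{(i)})_i)$ is asymptotically free.

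Combining these three ingredients, the spectral measure of $\eta I_N + G_N + \sum_i P_N^{(i)}$ converges in probability to
\begin{equation*}
\delta_\eta \boxplus \mathcal{S}_a \boxplus \Poiss^{\boxplus}_{\lambda_1,\delta_{x_1}} \boxplus \Poiss^{\boxplus}_{\lambda_2,\delta_{x_2}} \boxplus \cdots,
\end{equation*}
whose $\boxplus$-characteristic triplet is exactly $(\eta,a,\sum_i \lambda_i \delta_{x_i})=(\eta,a,\rho)$, so the limit is $\mu$. The main obstacle I anticipate is making the first reduction rigorous: one must show that truncating $\rho$ to finitely many atoms (or to a compact set) perturbs $\hat{\mu}_{H_N}$ by a controlled amount uniformly in $N$. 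This should follow from a direct second moment estimate on the spectral measure of the Poisson tail, using that the mass $\int x^2\wedge 1\, d\rho$ simultaneously governs the $\boxplus$-continuity via Lemma~\ref{lemconv} and the matricial convergence through $\rho_N$.
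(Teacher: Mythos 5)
The paper does not actually prove this theorem: it is quoted from \cite{Benaych-Georges2005,Cabanal-Duvillard2005}, and the surrounding text only checks that $\Pi_N$ agrees with the model $\Lambda_N$ of those references via Fourier transforms. Your overall architecture (Lévy--Khintchine decomposition of $\Pi_N(\mu)$ into a scalar shift, a Gaussian part and compound Poisson parts, convergence of each block, then asymptotic freeness of independent unitarily invariant matrices) is essentially the strategy of the cited papers, and the treatment of the Gaussian block and the freeness step are fine. However, there is a genuine error in your treatment of the Poisson blocks. A compound Poisson random matrix with intensity $N\lambda_i$ and jump law that of $x_i\,gE_{11}g^*$ is, in distribution, $x_i\sum_{j=1}^{K}u_ju_j^*$ with $K\sim\Poiss^*_{N\lambda_i}$ and $u_1,\ldots,u_K$ \emph{independent} uniform unit vectors. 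These vectors are not orthogonal, so this matrix is \emph{not} $x_i\,U\,\Pi_K\,U^*$ for a rank-$K$ orthogonal projection $\Pi_K$: the latter has empirical spectral measure $(1-K/N)\delta_0+(K/N)\delta_{x_i}$, whose limit is the two-atom measure $(1-\lambda_i)\delta_0+\lambda_i\delta_{x_i}$ and not the Marchenko--Pastur law $\Poiss^{\boxplus}_{\lambda_i,\delta_{x_i}}$. Your claimed rewriting therefore contradicts the limit you assert two lines later. The fix is to keep the block in its Wishart-type form $x_i\sum_j u_ju_j^*$ and invoke the Marchenko--Pastur theorem for sums of independent random rank-one projections with $K/N\to\lambda_i$; with that correction the limit $\Poiss^{\boxplus}_{\lambda_i,\delta_{x_i}}$ is the right one, consistent with $\Lambda(\Poiss^{\ast}_{\lambda,\rho})=\Poiss^{\boxplus}_{\lambda,\rho}$ in Example~\ref{exaf}.

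A secondary, smaller concern is the reduction to finitely supported $\rho$. The large-jump part $\rho|_{\{|x|>\epsilon\}}$ has finite mass and its matricial contribution has rank $O_P(N\rho(\{|x|>\epsilon\}))$, so a rank inequality controls it; the small-jump part has $\int_{\{|x|\le\epsilon\}}x^2\,\diff\rho$ small and needs a Hoffman--Wielandt or variance estimate on $\frac1N\Tr f(H_N)$ uniform in $N$. You gesture at this correctly, but as written it is only a sketch; it is standard, yet it is where most of the technical work of the cited proofs lives.
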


In~\cite{Benaych-Georges2005,Cabanal-Duvillard2005}, the model is in fact defined starting from a measure $\mu\in \ID(\R,\ast)$. More precisely, for all $\mu\in \ID(\R,\ast)$ with $\ast$-characteristic triplet $(\eta,a,\rho)$ and Lévy exponent $$\varphi_\mu(\theta)=\left(i\eta \theta -\frac{1}{2}a\theta^2+\int_\R (e^{i\theta x}-1-i\theta x 1_{[-1,1]}(x))\diff \rho(x)\right),$$Benaych-Georges and Cabanal-Duvillard defined $\Lambda_N(\mu)\in \ID_{\inv}(\mathcal{H}_N,\ast)$ by its Fourier transform: for $x,y\in \mathcal{H}_N$, we have $$\int_{\mathcal{H}_N} e^{i\Tr(xy)} \Lambda_N(\mu)(\diff x)=\exp\left(  \varphi_{\Lambda_N(\mu)}(y)\right)$$where
$\varphi_{\Lambda_N(\mu)}(y)= N\E[\varphi_\mu(\langle u,y u\rangle)]$, with $u$ uniformly distributed on the unit sphere of $\C^N$. More explicitly,
$$ \varphi_{\Lambda_N(\mu)}(y)=i\eta \Tr(y)-\frac{a}{2(N+1)}\left(\Tr(y)\Tr(y)+\Tr(y)^2\right)+\int_{\mathcal{H}_N}e^{i\Tr(xy)}-1-i 1_B(x)\Tr(xy)\ \Pi(\diff x).$$ Using Proposition~\ref{Fourier}, we see that it is exactly the Fourier transform of 
the infinitely divisible measure of $ \ID_{\inv}(\mathcal{H}_N,\ast)$ with characteristic triplet $\left(\eta, a_N,  \rho_N\right)$. Consequently, we have $\Lambda_N=\Pi_N \circ \Lambda$, or $\Pi_N=\Lambda_N \circ \Lambda^{-1} $ which can be expressed as the commutativity of the following diagram
$$\xymatrix{  \ID(\R,\ast) \ar[r]^-*+{\Lambda_N}\ar@/_2pc/[rr]_-*+{\Lambda}  & \ID_{\inv}(\mathcal{H}_N,\ast)&  \ID(\R,\boxplus) \ar[l]_-*+{\Pi_N}  }.$$ Nevertheless, we prefer to use $\Pi_N$ which turns out to be more suitable for our present purposes (see~Theorem~\ref{convwe}). One can consult also~\cite{Dominguez2012,Dominguez2013} for further information about this model.

\subsection{The matrix model $\Gamma_N$}\label{gamman}Here again, observe that the data of a covariance matrix of $\frak{u}(N)$ depends on the basis chosen, and recall that we fixed an orthonormal basis $\left\{Y_1,\ldots,Y_{N^2}\right\}$ of $\frak{u}(N)$ such that $Y_{N^2}=\frac{i}{\sqrt{N}}I_N$ (see Section~\ref{bloubi}).
\begin{definition}\label{gammand}Let $\mu\in \mathcal{ID}(\mathbb{U},\boxtimes)$  and let $(\omega,b,\upsilon)$ be its \mbox{$\boxtimes$-characteristic} triplet. The distribution $\Gamma_N(\mu)\in \ID_{\inv}(U(N),\circledast)$ is defined to be the infinitely divisible measure with characteristic triplet $\left(\Log(\omega) I_N, b_N,  \upsilon_N\right)$, where $\Log$ is the principal logarithm, $b_N$ is the $N^2\times N^2$-matrix
$$b_N=\left(\begin{array}{cccc}\frac{b}{N+1} &  &  & 0 \\ & \ddots &  &  \\ &  & \frac{b}{N+1} &  \\0 &  &  & b\end{array}\right) ,$$ and $ \upsilon_N$ is the Lévy measure on $U(N)$ which is the push-forward measure of $N \upsilon\otimes \Haar$ by the mapping from $\U\times U(N)$ to $U(N)$ defined by $$(\zeta,g)\mapsto g\left(\begin{array}{cccc}\zeta & 0 & \cdots & 0 \\0 & 1 & \ddots & \vdots \\\vdots & \ddots & \ddots & 0 \\0 & \cdots & 0 & 1\end{array}\right)g^*.$$
We also define $\Gamma_N(\lambda)$ to be the Haar measure of $U(N)$ when $\lambda$ is the Haar measure of $\U$.

\end{definition}
%
%

From this definition, we deduce right now the second half of Theorem~\ref{convwe}, as a consequence of the following propositions.

\begin{proposition}
For all $\mu$ and $\nu\in \ID(\U,\boxtimes)$, we have $\Gamma_N(\mu\boxtimes \nu)=\Gamma_N(\mu)\circledast \Gamma_N( \nu)$ .
\end{proposition}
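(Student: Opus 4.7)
The plan is to split into cases according to whether either argument equals the Haar measure $\lambda$, and to handle the generic case by a direct comparison of characteristic triplets.

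First, I would dispose of the Haar case. Since $\lambda$ is absorbing for $\boxtimes$ (by \cite{Bercovici1992}), if $\mu=\lambda$ or $\nu=\lambda$ then $\mu\boxtimes\nu=\lambda$, so $\Gamma_N(\mu\boxtimes\nu)$ is the Haar measure on $U(N)$. On the other hand $\Gamma_N(\lambda)$ is itself the Haar measure of $U(N)$ by definition, and the Haar measure is absorbing for $\circledast$ on $U(N)$ (left-invariance of Haar: if $H\sim \Haar$ and $V$ is independent of $H$, then $HV\sim \Haar$). Hence both sides agree.

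Now suppose $\mu,\nu\in\ID(\U,\boxtimes)\cap\Ms$ with $\boxtimes$-characteristic triplets $(\omega_1,b_1,\upsilon_1)$ and $(\omega_2,b_2,\upsilon_2)$. The $\boxtimes$-characteristic triplet of $\mu\boxtimes\nu$ is $(\omega_1\omega_2,\,b_1+b_2,\,\upsilon_1+\upsilon_2)$, so by Definition~\ref{gammand} the measure $\Gamma_N(\mu\boxtimes\nu)\in\ID_{\inv}(U(N),\circledast)$ has characteristic triplet
\[
\bigl(\Log(\omega_1\omega_2)\,I_N,\,(b_1+b_2)_N,\,(\upsilon_1+\upsilon_2)_N\bigr).
\]
By linearity of the constructions in Definition~\ref{gammand}, we have $(b_1+b_2)_N=(b_1)_N+(b_2)_N$, and since the pushforward is additive in the base measure, $(\upsilon_1+\upsilon_2)_N=(\upsilon_1)_N+(\upsilon_2)_N$. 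On the other side, Corollary~\ref{sumwc} shows that $\Gamma_N(\mu)\circledast\Gamma_N(\nu)$ admits
\[
\bigl((\Log(\omega_1)+\Log(\omega_2))\,I_N,\,(b_1)_N+(b_2)_N,\,(\upsilon_1)_N+(\upsilon_2)_N\bigr)
\]
as characteristic triplet.

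The main (and only) subtlety will be that $\Log(\omega_1)+\Log(\omega_2)$ and $\Log(\omega_1\omega_2)$ coincide only modulo $2i\pi$, since the principal logarithm is not additive. But this is exactly the ambiguity allowed by Lemma~\ref{tripletpi}: the triplets $(Y_0,(y_{i,j}),\Pi)$ and $(Y_0+2ik\pi I_N,(y_{i,j}),\Pi)$ represent the same measure in $\ID_{\inv}(U(N),\circledast)$. Writing $\Log(\omega_1)+\Log(\omega_2)=\Log(\omega_1\omega_2)+2ik\pi$ for some $k\in\Z$, this forces the two characteristic triplets above to define the same measure, whence $\Gamma_N(\mu)\circledast\Gamma_N(\nu)=\Gamma_N(\mu\boxtimes\nu)$ and the proof is complete.
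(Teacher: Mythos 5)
Your proof is correct and follows essentially the same route as the paper's: dispose of the Haar case via absorbingness, then in the generic case compare the characteristic triplets of $\Gamma_N(\mu\boxtimes\nu)$ and $\Gamma_N(\mu)\circledast\Gamma_N(\nu)$ (the latter via Corollary~\ref{sumwc}) and absorb the $2ik\pi$ discrepancy of the principal logarithm with Lemma~\ref{tripletpi}. Nothing is missing.
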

\begin{proof}
Let $\mu$ and $\nu\in \ID(\U,\boxtimes)$. If $\mu$ or $\nu$ is equal to $\lambda$, we have $\mu\boxtimes \nu=\lambda$. In this case, $\Gamma_N(\mu)$ or $\Gamma_N(\nu)$ is the Haar measure on $U(N)$ and consequently, $\Haar=\Gamma_N(\mu\boxtimes \nu)=\Gamma_N(\mu)\circledast \Gamma_N( \nu).$

If $\mu,\nu\in \ID(\U,\boxtimes)\cap \Ms$, with respective $\boxtimes$-characteristic triplets $(\omega_1,b_1,\upsilon_1)$ and $(\omega_2,b_2,\upsilon_2)$, the measure $\mu\boxtimes\nu\in  \Ms$ is a {$\boxtimes$-infinitely} divisible measure with $\boxtimes$-characteristic triplet $(\omega_1\omega_2,b_1+b_2,\upsilon_1+\upsilon_2)$. We denote by $(Y_0,(y_{i,j})_{1\leq i,j \leq N^2},\Pi)$ and $(Y_0',(y_{i,j}')_{1\leq i,j \leq N^2},\Pi')$ the respective characteristic triplets of $\Gamma(\mu\boxtimes \nu)$ and $\Gamma(\mu)\circledast \Gamma( \nu)$. It is straightforward to verify that $((y_{i,j})_{1\leq i,j \leq N^2},\Pi)=((y'_{i,j})_{1\leq i,j \leq N^2},\Pi')$, and it remains to compare $Y_0$ and $Y_0'$. We have $Y_0=\Log(\omega_1\omega_2)I_N$ and $Y_0=(\Log(\omega_1)+\Log(\omega_2))I_N$. As a consequence, $Y_0$ and $Y_0'$ differ by a multiple of $2i\pi I_N$. Using Lemma~\ref{tripletpi}, we deduce that $(Y_0,(y_{i,j})_{1\leq i,j \leq N^2},\Pi)$ and $(Y_0',(y_{i,j}')_{1\leq i,j \leq N^2},\Pi')$ are characteristic triplets of the same measure. In other words, $\Gamma(\mu\boxtimes \nu)=\Gamma(\mu)\circledast \Gamma( \nu)$.
\end{proof}

\begin{proposition}
For all $\mu\in \ID(\R,\boxplus)$, we have $\Gamma_N\circ \e_{\boxplus}(\mu)=\mathcal{E}_N\circ \Pi_N(\mu)$.\label{thfive}
\end{proposition}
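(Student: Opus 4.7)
The plan is to verify that the characteristic triplets of $\Gamma_N \circ \e_{\boxplus}(\mu)$ and of $\mathcal{E}_N \circ \Pi_N(\mu)$ coincide, modulo the ambiguity $2i\pi I_N$ on the drift allowed by Lemma~\ref{tripletpi}. Let $(\eta,a,\rho)$ denote the $\boxplus$-characteristic triplet of $\mu$, so $\Pi_N(\mu)$ has characteristic triplet $(\eta I_N, a_N, \rho_N)$, and $\e_{\boxplus}(\mu)$ has $\boxtimes$-characteristic triplet $(\omega,a,\upsilon)$ with $\omega=\exp(i\eta+i\int_\R(\sin x - 1_{[-1,1]}(x)x)\rho(\diff x))$ and $\upsilon = \e_*(\rho)_{|\U\setminus\{1\}}$, hence $\Gamma_N(\e_{\boxplus}(\mu))$ has characteristic triplet $(\Log(\omega) I_N, a_N, \upsilon_N)$. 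The covariance matrices coincide trivially since both are $a_N$.

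For the Lévy measures, I would unpack the two push-forward constructions. The measure $\rho_N$ is the push-forward of $N\rho\otimes \Haar$ by $(x,g)\mapsto g\,\mathrm{diag}(x,0,\dots,0)\,g^*$, and applying $\e$ to $g\,\mathrm{diag}(x,0,\dots,0)\,g^*$ produces $g\,\mathrm{diag}(e^{ix},1,\dots,1)\,g^*$. Thus $\e_*(\rho_N)$ is the push-forward of $N\rho\otimes\Haar$ by $(x,g)\mapsto g\,\mathrm{diag}(e^{ix},1,\dots,1)\,g^*$, which by change of variable in the first coordinate is exactly the push-forward of $N\,\e_*(\rho)\otimes\Haar$ by $(\zeta,g)\mapsto g\,\mathrm{diag}(\zeta,1,\dots,1)\,g^*$. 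Restricting to $U(N)\setminus\{I_N\}$ removes precisely the mass coming from $\zeta=1$ (equivalently $x\in 2\pi\Z$), so $\e_*(\rho_N)_{|U(N)\setminus\{I_N\}}=\upsilon_N$.

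For the drift I need to evaluate $\int_{\mathcal{H}_N}(\sin(x)-1_B(x)x)\,\rho_N(\diff x)$. Since $\|g\,\mathrm{diag}(y,0,\dots,0)\,g^*\|_{\mathcal{H}_N}=|y|$ by unitary invariance of the Hilbert–Schmidt norm, the indicator $1_B$ at $g\,\mathrm{diag}(y,0,\dots,0)\,g^*$ is exactly $1_{[-1,1]}(y)$; moreover $\sin(g\,\mathrm{diag}(y,0,\dots,0)\,g^*)=g\,\mathrm{diag}(\sin y,0,\dots,0)\,g^*$. Integrating out the Haar variable via $\int_{U(N)}gAg^*\,\diff g=\frac{1}{N}\Tr(A)I_N$ (Example~\ref{undeux}\,(1)) and using the factor $N$ in $\rho_N$, the double integral collapses to
\begin{equation*}
\int_{\mathcal{H}_N}\bigl(\sin(x)-1_B(x)x\bigr)\,\rho_N(\diff x)=I_N\int_{\R}\bigl(\sin y-1_{[-1,1]}(y)y\bigr)\,\rho(\diff y).
\end{equation*}
Multiplying by $i$ and adding $i\eta I_N$ yields the drift of $\mathcal{E}_N(\Pi_N(\mu))$ equal to $(i\eta+i\int(\sin y-1_{[-1,1]}(y)y)\rho(\diff y))I_N$, whose exponential is $\omega I_N$, so this drift differs from $\Log(\omega)I_N$ only by a multiple of $2i\pi I_N$.

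By Lemma~\ref{tripletpi}, these two triplets define the same element of $\ID_{\inv}(U(N),\circledast)$, proving $\Gamma_N\circ \e_{\boxplus}(\mu)=\mathcal{E}_N\circ \Pi_N(\mu)$. The only nontrivial step is the Haar averaging that produces the scalar $\frac{1}{N}\Tr(\cdot)I_N$, which is a standard Schur–Weyl identity already recorded in the paper; the rest is bookkeeping of the explicit formulas for the four maps.
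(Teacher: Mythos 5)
Your proposal is correct and follows essentially the same route as the paper: compare the three components of the characteristic triplets, identify both Lévy measures with the push-forward of $N\rho\otimes\Haar$ by $(x,g)\mapsto g\,\mathrm{diag}(e^{ix},1,\dots,1)g^*$ restricted to $U(N)\setminus\{I_N\}$, collapse the drift integral via $E(A)=\frac{1}{N}\Tr(A)I_N$ from Example~\ref{undeux}, and absorb the residual $2i\pi I_N$ ambiguity with Lemma~\ref{tripletpi}. Nothing is missing.
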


\begin{proof}Let $(\eta,a,\rho)$ be the $\boxplus$-characteristic triplet of $\mu$. We denote by $(Y_0,(y_{i,j})_{1\leq i,j \leq N^2},\Pi)$ and $(Y_0',(y_{i,j}')_{1\leq i,j \leq N^2},\Pi')$ the respective characteristic triplets of $\Gamma_N\circ \e_{\boxplus}(\mu)$ and $\mathcal{E}_N\circ \Pi_N(\mu)$. We remark first that, following the definitions, $$(y_{i,j})_{1\leq i,j \leq N^2}=(y_{i,j}')_{1\leq i,j \leq N^2}=\left(\begin{array}{cccc}\frac{a}{N+1} &  &  & 0 \\ & \ddots &  &  \\ &  & \frac{a}{N+1} &  \\0 &  &  & a\end{array}\right)$$ and $\Pi=\Pi'=M_{|U(N)\setminus \{I_N\}}$
where $M$ is the push-forward measure of $N \rho \otimes \Haar$ by the mapping from $\R \times U(N)$ to $U(N)$ given by $$(x,g)\mapsto g\left(\begin{array}{cccc}e^{ix} & 0 & \cdots & 0 \\0 & 1 & \ddots & \vdots \\\vdots & \ddots & \ddots & 0 \\0 & \cdots & 0 & 1\end{array}\right)g^*.$$
To conclude, it remains to compare $Y_0$ and $Y_0'$. We have
$$Y_0=\Log\circ \exp\left(i\eta+i\int_{\mathbb{R}}(\sin(x)-1_{[-1,1]}(x) x) \ \rho(\diff x)\right)I_N$$ and 
\begin{align*}
Y_0'&=i\eta I_N+i\int_{\mathcal{H}_N}(\sin(x)-1_U(x) x) \ \diff \rho_N( x) \\
&=i\eta I_N+iN\int_{\mathbb{R}}\int_{U(N)}g\left(\begin{array}{cccc}(\sin(x)-1_{[-1,1]}(x) x) &  &  & 0 \\ & 0 &  &  \\ & & \ddots &  \\0 &  & & 0\end{array}\right)g^* \ \diff g \rho(\diff x)\\
&=i\eta I_N+iN\int_{\mathbb{R}}\frac{1}{N}(\sin(x)-1_{[-1,1]}(x) x) \rho(\diff x)\\
&=\left(i\eta+i\int_{\mathbb{R}}(\sin(x)-1_{[-1,1]}(x) x) \ \rho(\diff x)\right)I_N,
\end{align*}
where we have used that $E(A)=\frac{1}{N}\Tr(A)I_N$ (see Example~\ref{undeux}) for the integration with respect to the Haar measure of $U(N)$. The difference between $Y_0$ and $Y_0'$ is a multiple of $2i\pi I_N$. Using Lemma~\ref{tripletpi}, we deduce that $(Y_0,(y_{i,j})_{1\leq i,j \leq N^2},\Pi)$ and $(Y_0',(y_{i,j}')_{1\leq i,j \leq N^2},\Pi')$ are characteristic triplets of the same measure. In other words, $\Gamma_N\circ \e_{\boxplus}(\mu)=\mathcal{E}_N\circ \Pi_N(\mu)$.
\end{proof}

\subsection{The large-$N$ limit}We are now ready to prove the first half of Theorem~\ref{convwe}, which is a corollary of the following theorem.

\begin{theorem}Let $\mu\in \mathcal{ID}(\mathbb{U},\boxtimes)$. For all $N\in \N^*$, let $U_N$ be a random matrix whose law is $\Gamma_N(\mu)$.\label{thsixp}
%
For all polynomials $P_1,\ldots,P_k\in \C[X]$, we have,\label{fact}
$$\lim_{N\to \infty}\E\left[\frac{1}{N}\Tr\left(P_1(U_N)\right)\cdots \frac{1}{N}\Tr\left(P_k(U_N)\right) \right]=\int_\U P_1\diff\mu \cdots \int_\U P_k\diff\mu.$$
\end{theorem}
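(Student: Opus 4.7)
The plan is as follows. By multilinearity, reduce to the case where each $P_i$ is a monomial $X^{n_i}$. Set $n=\sum_i n_i$ and fix a permutation $\sigma_0\in\mathfrak{S}_n$ whose cycle type is $(n_1,\ldots,n_k)$. A direct computation in the Schur–Weyl framework (as already used implicitly in Proposition~\ref{collins}) gives the trace identity
\[
\prod_{i=1}^k \Tr(U^{n_i}) \;=\; \Tr_{(\C^N)^{\otimes n}}\!\bigl(\rho_N^{\mathfrak{S}_n}(\sigma_0)\cdot U^{\otimes n}\bigr),
\]
so that the quantity to study equals $N^{-k}\Tr\bigl(\rho_N^{\mathfrak{S}_n}(\sigma_0)\cdot \E[U_N^{\otimes n}]\bigr)$. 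Since $\Gamma_N(\mu)$ is conjugation-invariant, Proposition~\ref{momsemi} yields $\E[U_N^{\otimes n}]=d\rho_N^{\mathfrak{S}_n}(e^{\tilde L})$ for an explicit $\tilde L\in\C[\mathfrak{S}_n]$.

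Next, I would compute $\tilde L$ explicitly for $\Gamma_N(\mu)$. The Lévy measure $\upsilon_N$ is, by construction, the pushforward of $N\upsilon\otimes\Haar$ under $(\zeta,g)\mapsto g\,\mathrm{diag}(\zeta,1,\ldots,1)\,g^{*}$; using invariance of trace under conjugation and the fact that only the $(1,1)$-entry of $\mathrm{diag}(\zeta,1,\ldots,1)-I_N$ is nonzero, one obtains
\[
\int_{U(N)}\Tr(\Re(g)-I_N)\,\upsilon_N(dg)=N\!\int_\U(\Re\zeta-1)\,d\upsilon(\zeta),
\qquad
\int_{U(N)}\prod_{c}\Tr\bigl((g-I_N)^{\sharp c}\bigr)\upsilon_N(dg)=N\!\int_\U(\zeta-1)^m d\upsilon(\zeta)
\]
for any $\sigma\in\mathfrak{S}_m$ with $m\geq 2$. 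Combining these with the values $y_0=-i\Log(\omega)$, $\alpha=b/(N+1)$, $\beta=b$ of Definition~\ref{gammand} and matching against Proposition~\ref{loc}, the coefficients of $\tilde L$ can be expressed in terms of the free log-cumulants. Using the Weingarten asymptotics $\Wg(\pi)=O(N^{-m-|\pi|})$ from Proposition~\ref{collins2} one arrives at the decomposition
\[
\tilde L \;=\; n\,L\kappa_1(\mu)\cdot 1_{\mathfrak{S}_n}\;+\sum_{\substack{2\leq m\leq n\\ c\ m\text{-cycle of }\mathfrak{S}_n}} L\kappa_m(\mu)\cdot c \;+\; R_N,
\]
where $R_N$ gathers contributions of non-cycle permutations $\pi$ with coefficients $O(N^{-|\pi|})$ (this identification is precisely the content of the key Lemma~\ref{superlemme} mentioned in the introduction).

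The last step is to extract the leading order of $N^{-k}\Tr\bigl(\rho_N^{\mathfrak{S}_n}(\sigma_0)\cdot d\rho_N^{\mathfrak{S}_n}(e^{\tilde L})\bigr)$. Expanding $e^{\tilde L}=\sum_{\ell\geq 0}\tilde L^{\ell}/\ell!$, and using $\Tr(d\rho_N^{\mathfrak{S}_n}(\sigma_0\pi))=N^{\ell(\sigma_0\pi)}$, the contribution of a word $(c_1,\ldots,c_\ell)$ of cycles is $N^{\ell(\sigma_0 c_1\cdots c_\ell)-k}\prod_{j} L\kappa_{\sharp c_j}(\mu)/\ell!$. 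The fundamental inequality $\ell(\sigma_0\tau)\leq \ell(\sigma_0)+|\tau|$ with equality iff $\tau$ lies on a geodesic $1_{\mathfrak{S}_n}\prec\cdots\prec\tau\preceq\sigma_0^{-1}$ then shows that only words of cycles whose partial products $c_\ell,\,c_{\ell-1}c_\ell,\,\ldots,\,c_1\cdots c_\ell$ form a simple chain in $[1_{\mathfrak{S}_n},\sigma_0^{-1}]$ survive in the limit; the remainder $R_N$ gives purely lower-order terms. After a Kreweras-type involution $\sigma\mapsto \sigma_0\sigma^{-1}$ identifying $[1_{\mathfrak{S}_n},\sigma_0^{-1}]$ with $[1_{\mathfrak{S}_n},\sigma_0]$, Proposition~\ref{taulogcumdeux} identifies the resulting sum with $\prod_{c\text{ cycle of }\sigma_0} m_{\sharp c}(\mu)=\prod_i\int_\U P_i\,d\mu$, giving both the claimed limit and the factorization.

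The main obstacle will be the second step: cleanly separating the $O(1)$ part of $\tilde L$ (the cycle terms with log-cumulant coefficients) from the $O(1/N)$ remainder $R_N$ coming from the Weingarten expansion, and then showing that $R_N$ truly contributes only $o(1)$ after exponentiation and contraction against $\rho_N^{\mathfrak{S}_n}(\sigma_0)$ — a uniform bound in $\ell$ is needed since the exponential mixes all orders. The combinatorial matching in the third step, once the separation is granted, is an immediate application of Proposition~\ref{taulogcumdeux}.
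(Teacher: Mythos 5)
Your overall strategy is the paper's: reduce to monomials, encode $\prod_i\Tr(U^{n_i})$ as $\Tr_{(\C^N)^{\otimes n}}(U^{\otimes n}\circ\rho_N^{\mathfrak{S}_n}(\sigma_0))$, apply Proposition~\ref{momsemi} to get $\E[U_N^{\otimes n}]=d\rho_N^{\mathfrak{S}_n}(e^{\tilde L_N})$, and conclude via free log-cumulants and Proposition~\ref{taulogcumdeux}. But there are two genuine problems. First, your announced decomposition of $\tilde L_N$ is not correct as stated, and it is not ``the content of Lemma~\ref{superlemme}''. In $\tilde L_N$ itself the coefficient of an $m$-cycle is of order $N^{1-m}L\kappa_m(\mu)$ (the Weingarten sum contributes $N^{-m}+O(N^{-m-1})$, multiplied by the factor $N$ coming from $\upsilon_N$), and the transposition coefficient is $-b/(N+1)$ plus an $O(1/N)$ Weingarten term; so $\tilde L_N$ is \emph{not} $nL\kappa_1\cdot 1_{\mathfrak{S}_n}+\sum_c L\kappa_m(\mu)\,c$ up to an $O(1/N)$ remainder. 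Consistently with this, your leading-order bookkeeping $N^{\ell(\sigma_0c_1\cdots c_\ell)-k}\prod_j L\kappa_{\sharp c_j}(\mu)$ would actually diverge (e.g.\ a single transposition $c$ with $c\sigma_0\preceq\sigma_0$ gives $N^{+1}$ times an $O(1)$ coefficient); the correct exponent is $\ell(\sigma_0c_1\cdots c_\ell)-k+\sum_j(1-\sharp c_j)\le 0$, with equality exactly on simple chains. Lemma~\ref{superlemme} is a statement about the \emph{conjugated} operator $N^{\ell}T_NN^{-\ell}$, where the factors $N^{\ell(c\sigma)-\ell(\sigma)}$ precisely absorb the $N^{1-m}$; the conjugation is what makes the limit generator have the $O(1)$ log-cumulant coefficients you wrote.

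Second, the step you yourself flag as the ``main obstacle'' --- a bound uniform in the order $\ell$ of the exponential expansion, needed to exchange $N\to\infty$ with the series $e^{\tilde L_N}=\sum_\ell \tilde L_N^{\ell}/\ell!$ --- is left unresolved, and it is exactly the one nontrivial analytic point. The paper never expands $e^{\tilde L_N}$ term by term against growing powers of $N$: writing $\phi$ for the functional $\phi(\sigma)=1$ and $T_N$ for multiplication by $\tilde L_N$, it observes that $\E\bigl[N^{-\ell(\sigma_0)}\prod_c\Tr(U_N^{\sharp c})\bigr]=\phi\bigl(N^{\ell}e^{T_N}N^{-\ell}(\sigma_0)\bigr)=\phi\bigl(e^{N^{\ell}T_NN^{-\ell}}(\sigma_0)\bigr)$, so all the $N$-dependence is pushed into the generator. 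Since $\C[\mathfrak{S}_n]$ is finite dimensional, convergence of the generator $N^{\ell}T_NN^{-\ell}\to T$ (Lemma~\ref{superlemme}) immediately gives convergence of the exponentials, with no uniform-in-$\ell$ estimate required; then $\phi(e^{T}(\sigma_0))$ is the simple-chain sum and Proposition~\ref{taulogcumdeux} identifies it with $\prod_c m_{\sharp c}(\mu)$. To repair your argument you should either adopt this conjugation trick or supply the missing uniform domination of the series; as written, the proof is incomplete at its central step (and, minor point, the case $\mu=\lambda$ the Haar measure, where there is no characteristic triplet, should be treated separately as in the paper).
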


\begin{proof} If $\mu$ is the Haar measure $\lambda$ of $\U$, then $\Gamma_N(\mu)$ is the Haar measure on $U(N)$ for which the result is well-known. Let us assume that $\mu\in \mathcal{ID}(\mathbb{U},\boxtimes)\cap \Ms$, and let $(\omega,b, \upsilon)$ be its \mbox{$\boxtimes$-characteristic} triplet. Thanks to Definition~\ref{gammand}, we know that a characteristic triplet of $\Gamma_N(\mu)$ is given by $$\left(i y_0I_N,\left(\begin{array}{cccc}\alpha &  &  & 0 \\ & \ddots &  &  \\ &  & \alpha &  \\0 &  &  & \beta\end{array}\right), \Pi\right)$$ where $y_0=-i\Log(\omega)$, $\alpha=b/(N+1)$, $\beta=b$ and $\Pi$ is the Lévy measure obtained from $\upsilon$ as in Definition~\ref{gammand}.

By linearity, it suffices to prove the result for monomials. Let $l_1,\ldots, l_k\in \N$. We want to prove that
$$\lim_{N\to \infty}\E\left[\frac{1}{N}\Tr\left(U_N^{l_1}\right)\cdots \frac{1}{N}\Tr\left(U_N^{l_k}\right) \right]=m_{l_1}(\mu)\cdots m_{l_k}(\mu).$$
We will prove the result under the following form: for all $\sigma\in \mathfrak{S}_n$,
\begin{equation*}
\lim_{N\to \infty}\E\left[N^{-\ell(\sigma)}\prod_{c\text{ cycle of }\sigma}\Tr(U_N^{\sharp c})\right]=\prod_{c\text{ cycle of }\sigma}m_{\sharp c}(\mu).
\end{equation*}
We observe that, for all $U\in U(N)$ and $\sigma\in \mathfrak{S}_n$, we have
\begin{equation}
\prod_{c\text{ cycle of }\sigma}\Tr(U^{\sharp c})=\Tr_{(\C^N)^{\otimes n}}\left(U^{\otimes n}\circ\rho^{\mathfrak{S}_n}_N(\sigma) \right).\label{trtr}
\end{equation}
In order to use Proposition~\ref{momsemi}, we define $\tilde{L}_N\in \C[\mathfrak{S}_n]$ by
\begin{align*}
\tilde{L}_N=& \left(niy_0-\frac{n^2}{N}\frac{\beta}{2}+\left(\frac{n^2}{N}-nN\right)\frac{\alpha}{2}+\frac{n}{N}\int_{U(N)}\Tr\left( \Re(g)-1\right) \Pi(\diff g)\right)1_{\mathfrak{S}_n} -\alpha \displaystyle\sum_{\tau\in \mathcal{T}_n}\tau\\
&+\displaystyle\sum_{\substack{2\leq m \leq n\\1\leq k_1<\ldots<k_m \leq n}}\displaystyle\sum_{\pi', \pi \in \mathfrak{S}_m}\Wg(\pi'^{-1} \pi)\int_{U(N)}\displaystyle\prod_{c \text{ cycle of }\sigma}\Tr\left((g-1)^{\sharp c}\right) \ \Pi(\diff g)\cdot \iota_{k_1,\ldots,k_m}(\pi)\\
=&\left(n\Log(\omega)-\frac{n^2}{N}b+\left(\frac{n^2}{N}-nN\right)\frac{b}{2(N+1)}+n\int_{\U}\left( \Re(\zeta)-1\right) \upsilon(\diff \zeta)\right)1_{\mathfrak{S}_n} -\frac{b}{N+1} \displaystyle\sum_{\tau\in \mathcal{T}_n}\tau\\
&+\displaystyle\sum_{\substack{2\leq m \leq n\\1\leq k_1<\ldots<k_m \leq n}}\displaystyle\sum_{\pi', \pi \in \mathfrak{S}_m}\Wg(\pi'^{-1} \pi)N\int_{\U}\displaystyle(\zeta-1)^{m} \ \upsilon(\diff \zeta)\cdot \iota_{k_1,\ldots,k_m}(\pi).
\end{align*}
Using Proposition~\ref{momsemi}, we have
\begin{align*} \E\left[N^{-\ell(\sigma)}\prod_{c\text{ cycle of }\sigma}\Tr(U_N^{\sharp c})\right]&=N^{-\ell(\sigma)}\Tr_{(\C^N)^{\otimes n}}\left( \E\left[U_N^{\otimes n}\right]\circ\rho^{\mathfrak{S}_n}_N(\sigma) \right)\\
&=N^{-\ell(\sigma)}\Tr_{(\C^N)^{\otimes n}}\left(\rho^{\mathfrak{S}_n}_N(e^{\tilde{L}_N}\sigma)\right).\end{align*}
From~\eqref{trtr}, we deduce also that, for all $\sigma\in \mathfrak{S}_n$, we have
$$\Tr_{(\C^N)^\otimes n}\left(\rho^{\mathfrak{S}_n}_N(\sigma) \right)=N^{\ell(\sigma)}.$$
We denote by $N^\ell$ (resp. $N^{-\ell}$) the linear operator on $\C[\mathfrak{S}_n]$ defined by $N^\ell(\sigma)  =N^{\ell(\sigma)}\sigma$ (resp. $N^{-\ell}(\sigma)  =N^{-\ell(\sigma)}\sigma$) and by $\phi$ the linear functional defined by $\phi(\sigma)=1$. This way, we have $\Tr_{(\C^N)^{\otimes n}}\circ\rho^{\mathfrak{S}_n}_N=\phi\circ N^{\ell}$. Let us also denote by $T_N$ the linear operator on $\C[\mathfrak{S}_n]$ of multiplication by $\tilde{L}_N$, defined by $T_N(\sigma)  =\tilde{L}_N\sigma$.
We can rewrite
\begin{align*}
 \E\left[N^{-\ell(\sigma)}\prod_{c\text{ cycle of }\sigma}\Tr(U_N^{\sharp c})\right]&=\Tr_{(\C^N)^{\otimes n}}\left(\rho^{\mathfrak{S}_n}_N(e^{\tilde{L}_N}N^{-\ell(\sigma)}\sigma)\right)\\
 &=\phi\left(N^{\ell}e^{T_N}N^{-\ell}(\sigma)\right)\\
 &=\phi\left(e^{N^{\ell}T_NN^{-\ell}}(\sigma)\right).
\end{align*}
We take the limit with the help of the following lemma. Recall that $(L\kappa_n(\mu))_{n\in \N^*}$ are the free log-cumulants of $\mu$ (see Section~\ref{flc}), which are given by\begin{enumerate}
\item $L\kappa_1(\mu)=\Log(\omega)-b/2+\int_\U \left( \Re(\zeta)-1\right) \diff \upsilon(\zeta)$,
\item $L\kappa_2(\mu)=-b+\int_\U (\zeta -1)^2\diff \upsilon(\zeta)$
\item  and $L\kappa_n(\mu)=\int_\U (\zeta -1)^n\diff \upsilon(\zeta)$ for all $n\geq2$.
\end{enumerate}
\begin{lemma}
When $N$ tends to $\infty$, the operator $N^{\ell}T_NN^{-\ell}$ converges to an operator $T$ which is such that, for all $\sigma\in \mathfrak{S}_n$,\label{superlemme}
$$T(\sigma)  =n L\kappa_1(\mu)\cdot \sigma+\displaystyle\sum_{\substack{2\leq m \leq n\\c\ m\textup{-cycle of }\mathfrak{S}_n\\c\sigma\preceq \sigma}} L\kappa_{m}\left(\mu \right)\cdot c\sigma. $$
\end{lemma}

\begin{proof}
We shall prove that, for a fixed $\sigma\in \mathfrak{S}_n$, $\lim_{N\to \infty}N^{\ell}T_NN^{-\ell}(\sigma)=T(\sigma)$. Let us compute
$$N^{\ell}T_NN^{-\ell}(\sigma)=N^{\ell(\sigma)}N^{\ell}(\tilde{L}\sigma).$$
Replacing $\tilde{L}$ by its value gives us $N^{\ell(\sigma)}N^{\ell}(\tilde{L}\sigma)=(I+I\!I+I\!I\!I)\sigma$, with
\begin{align*}
I&=\left(n\Log(\omega)-\frac{n^2}{N}b+\left(\frac{n^2}{N}-nN\right)\frac{b}{2(N+1)}+n\int_{\U}\left( \Re(\zeta)-1\right) \upsilon(\diff \zeta)\right)1_{\mathfrak{S}_n},\\
I\!I &=-\frac{b}{N+1} \displaystyle\sum_{\tau\in \mathcal{T}_n}N^{\ell(\tau\sigma)-\ell(\sigma)}\tau,
\end{align*}
and
\begin{multline*}
I\!I\!I=\displaystyle\sum_{\substack{2\leq m \leq n\\1\leq k_1<\ldots<k_m \leq n}}\displaystyle\sum_{\pi \in \mathfrak{S}_m}\int_{\U}\displaystyle(\zeta-1)^{m} \ \upsilon(\diff \zeta)\cdot \left(\displaystyle\sum_{\pi' \in \mathfrak{S}_m}\Wg({\pi'}^{-1} \pi)\right)\cdot N^{1+\ell\left(\iota_{k_1,\ldots,k_m}(\pi)\sigma\right)-\ell(\sigma)} \\
\cdot \iota_{k_1,\ldots,k_m}(\pi).
\end{multline*}
The first limit is immediate:  $$\lim_{N\to \infty}I=\left(n\Log(\omega)-\frac{n}{2}b+n\int_{\U}\left( \Re(\zeta)-1\right) \upsilon(\diff \zeta)\right)1_{\mathfrak{S}_n}=n L\kappa_1(\mu)1_{\mathfrak{S}_n}.$$
For the second and the third term, we recall that for all $ \pi\in \mathfrak{S}_n$ , we have $$d(1,\sigma)\leq d(1,\pi \sigma)+ d(\pi \sigma,\sigma)$$ with equality if and only if $\pi \sigma\preceq \sigma$ (see Section~\ref{dist}).

Let us focus on $I\!I$. We fix $\tau\in \mathcal{T}_n$. We know that $d(1,\sigma)\leq d(1,\tau \sigma)+ d(\tau \sigma,\sigma)$. In term of numbers of cycles, it means that $n-\ell(\sigma)\leq n-\ell(\tau \sigma)+n- \ell(\tau )$. Because $\ell(\tau )=n-1$, we have $ \ell(\tau \sigma)-\ell(\sigma)\leq 1$ with equality if and only if $\tau \sigma\preceq \sigma$. By consequence,
$$\lim_{N\to \infty}I\!I=-b\sum_{\substack{\tau\in \mathcal{T}_n\\\tau\sigma\preceq \sigma}} \tau\sigma.$$
A similar reasoning can be made for $I\!I\!I$. Let us fix $2\leq m \leq n,1\leq k_1<\ldots<k_m \leq n$ and $\pi\in \mathfrak{S}_m$. We denote by $c$ the permutation $\iota_{k_1,\ldots,k_m}(\pi).$ On one hand, Proposition~\ref{collins2} gives us $\Wg({\pi'}^{-1} \pi)=O(N^{-m-1})$ if $\pi\neq \pi'$ and $\Wg({\pi'}^{-1} \pi)=N^{-n}+O(N^{-n-1})$ if $\pi=\pi'$, and by consequence, $$\displaystyle\sum_{\pi' \in \mathfrak{S}_m}\Wg({\pi'}^{-1} \pi)=N^{-m}+O(N^{-m-1}).$$ On the other hand, we know that ${d(1,\sigma)\leq d(1,c \sigma)+ d(c \sigma,\sigma)}$. In terms of numbers of cycles, it means that $n-\ell(\sigma)\leq n-\ell(c \sigma)+n- \ell(c )$. Because $\ell(c )=\ell\left(\iota_{k_1,\ldots,k_m}(\pi)\right)=n-m+\ell(\pi)$, we have $1+\ell\left(c\sigma\right)-\ell(\sigma)\leq 1+m-\ell(\pi) $. Thus, we have,
$$1+\ell\left(c\sigma\right)-\ell(\sigma)\leq m$$
with equality if and only if  we have both $c \sigma\preceq \sigma$ and $\ell(\pi)=1$. Consequently, the term
$$\displaystyle\sum_{\pi' \in \mathfrak{S}_m}\Wg({\pi'}^{-1} \pi)N^{1+\ell\left(\iota_{k_1,\ldots,k_m}(\pi)\sigma\right)-\ell(\sigma)} $$
is equal to $1+O(N^{-1})$ if we have both $c \sigma\preceq \sigma$ and $\ell(\pi)=1$, but it is $O(N^{-1})$ if not. Finally,
\begin{align*}\lim_{N\to \infty}I\!I\!I&=\displaystyle\sum_{\substack{2\leq m \leq n\\1\leq k_1<\ldots<k_m \leq n}}\displaystyle\sum_{\substack{\pi\ m-\text{cycle of } \mathfrak{S}_m\\ \iota_{k_1,\ldots,k_m}(\pi)\sigma\preceq \sigma}}\int_{\U}\displaystyle(\zeta-1)^{m} \ \upsilon(\diff \zeta) \cdot \iota_{k_1,\ldots,k_m}(\pi)\sigma\\
&=\displaystyle\sum_{2\leq m \leq n}\ \ \displaystyle\sum_{\substack{c\ m-\text{cycle of } \mathfrak{S}_n\\ c\sigma\preceq \sigma}}\int_{\U}\displaystyle(\zeta-1)^{m} \ \upsilon(\diff \zeta) \cdot c\sigma . \end{align*}
Thus, we have
\begin{equation*}
\lim_{N\to \infty} I+I\!I+I\!I\!I=n L\kappa_1(\mu)\cdot \sigma+\displaystyle\sum_{\substack{2\leq m \leq n\\c\ m\text{-cycle of }\mathfrak{S}_n\\c\sigma\preceq \sigma}} L\kappa_{m}\left(\mu \right)\cdot c\sigma=T(\sigma).\qedhere
\end{equation*}
\end{proof}
As a consequence, we have
\begin{align*} &\lim_{N\to \infty}\E\left[N^{-\ell(\sigma)}\prod_{c\text{ cycle of }\sigma}\Tr(U_N^{\sharp c})\right]=\phi(e^{T}(\sigma))=\phi(e^{n L\kappa_1(\mu)}e^{T-nL\kappa_1(\mu)}(\sigma))\\
&\hspace{3cm}=\phi\left(e^{n L\kappa_1(\mu)}\displaystyle\sum_{\substack{\Gamma \text{ simple chain in }[1,\sigma]\\ \Gamma=(\sigma_0,\ldots,\sigma_{|\Gamma|}),\sigma_{|\Gamma|}=\sigma}} \frac{1}{|\Gamma| !}\displaystyle\prod_{i=1}^{|\Gamma|} L\kappa_{d(\sigma_i,\sigma_{i-1})+1}\left(\mu \right)\cdot \sigma_0\right)\\
&\hspace{3cm}=e^{n L\kappa_1(\mu)}\displaystyle\sum_{\substack{\Gamma \text{ simple chain in }[1,\sigma]\\ \Gamma=(\sigma_0,\ldots,\sigma_{|\Gamma|}),\sigma_{|\Gamma|}=\sigma}} \frac{1}{|\Gamma| !}\displaystyle\prod_{i=1}^{|\Gamma|} L\kappa_{d(\sigma_i,\sigma_{i-1})+1}\left(\mu \right).
\end{align*}
Using~\eqref{taulogcumdeux} on the right-hand side, we conclude that \begin{equation*}\lim_{N\to \infty} \E\left[N^{-\ell(\sigma)}\prod_{c\text{ cycle of }\sigma}\Tr(U_N^{\sharp c})\right]=\prod_{c\text{ cycle of }\sigma}m_{\sharp c}(\mu).\qedhere\end{equation*}

\end{proof}

\begin{corollary}Let $\mu\in \mathcal{ID}(\mathbb{U},\boxtimes)$. For all $N\in \N^*$, let $U_{N}$ be a random matrix whose law is $\Gamma_N(\mu)$, and whose empirical spectral measure is\label{thsix}
$$\hat{\mu}_{U_N}=\frac{1}{N}\sum_{\substack{\textup{eigenvalue }\lambda\textup{ of }U^{N}\\ \textup{(with multiplicity)} }}\delta_{\lambda} .$$
Then, the measures $\E[\hat{\mu}_{U_{N}}]$ converge weakly to $\mu$ when $N$ tends to $\infty$.
\end{corollary}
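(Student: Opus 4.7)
The plan is to deduce the corollary directly from Theorem~\ref{thsixp}, which does all the hard work; what remains is a routine argument extending convergence of polynomial moments to weak convergence of expected spectral measures on $\U$.

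First, I would specialize Theorem~\ref{thsixp} to the case $k=1$: for every polynomial $P\in\C[X]$,
\[
\lim_{N\to\infty}\E\!\left[\tfrac{1}{N}\Tr(P(U_N))\right]=\int_\U P\,\diff\mu.
\]
By definition of the empirical spectral measure, $\tfrac{1}{N}\Tr(P(U_N))=\int_\U P\,\diff\hat{\mu}_{U_N}$, and by Fubini the expected value equals $\int_\U P\,\diff\E[\hat{\mu}_{U_N}]$. Thus the convergence above reads
\[
\lim_{N\to\infty}\int_\U P\,\diff\E[\hat{\mu}_{U_N}]=\int_\U P\,\diff\mu\qquad\text{for every }P\in\C[X].
\]

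Next, I would promote this to trigonometric polynomials. Since $U_N\in U(N)$, one has $U_N^{-k}=(U_N^*)^k$ and $\Tr((U_N^*)^k)=\overline{\Tr(U_N^k)}$, so
\[
\E\!\left[\tfrac{1}{N}\Tr(U_N^{-k})\right]=\overline{\E\!\left[\tfrac{1}{N}\Tr(U_N^k)\right]}\xrightarrow[N\to\infty]{}\overline{m_k(\mu)}=\int_\U\zeta^{-k}\,\diff\mu(\zeta).
\]
Combining with the positive powers, $\int_\U Q\,\diff\E[\hat{\mu}_{U_N}]\to\int_\U Q\,\diff\mu$ for every Laurent polynomial $Q(\zeta)=\sum_{k\in\Z}c_k\zeta^k$ (finite sum).

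Finally, I would invoke the Stone--Weierstrass theorem on the compact set $\U$: the $*$-subalgebra of Laurent polynomials in $\zeta$ separates points and contains the constants, hence is uniformly dense in $C(\U)$. Given $f\in C(\U)$ and $\varepsilon>0$, choose a Laurent polynomial $Q$ with $\|f-Q\|_\infty<\varepsilon$. Then
\[
\left|\int_\U f\,\diff\E[\hat{\mu}_{U_N}]-\int_\U f\,\diff\mu\right|\le 2\varepsilon+\left|\int_\U Q\,\diff\E[\hat{\mu}_{U_N}]-\int_\U Q\,\diff\mu\right|,
\]
and letting $N\to\infty$ followed by $\varepsilon\to 0$ yields weak convergence $\E[\hat{\mu}_{U_N}]\to\mu$. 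There is no real obstacle here: the entire content sits in Theorem~\ref{thsixp}, and the corollary is merely a density argument on the compact group $\U$ (tightness being automatic).
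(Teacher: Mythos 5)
Your proof is correct and follows essentially the same route as the paper: the paper likewise reduces the corollary to the $k=1$ case of Theorem~\ref{thsixp}, writing $\int_\U\zeta^n\,\diff\E[\hat{\mu}_{U_N}]=\E[\frac{1}{N}\Tr(U_N^n)]\to m_n(\mu)$ and invoking the equivalence (already used earlier in the paper) between convergence of moments and weak convergence for probability measures on $\U$. Your only addition is to spell out that implicit last step explicitly, via the conjugate relation $\Tr(U_N^{-k})=\overline{\Tr(U_N^{k})}$ and Stone--Weierstrass, which is a fine elaboration rather than a different argument.
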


\begin{proof}We verify the convergence of moments. Let $n\in \N$. We have
$$
\int_\U\zeta^n\diff\E\left[\hat{\mu}_{U_{N}}\right]=\E\left[\int_\U\zeta^n\diff\hat{\mu}_{U_{N}}\right]
=\E\left[\frac{1}{N}\Tr((U_{N})^n)\right]
$$
which tends to $m_n(\mu)$ as $N$ tends to $\infty$.
\end{proof}

\begin{remark}
In fact, the proof can be easily extended to a more general situation. Let $\mu\in \mathcal{ID}(\mathbb{U},\boxtimes)$ and let $(\omega,b,\upsilon)$ be its $\boxtimes$-characteristic triplet. For all $N\in \N^*$, let $y_0, \alpha,\beta\in \mathbb{R}$ and $\Pi$ be a L\'{e}vy measure on $U(N)$ which is conjugate invariant. We suppose that
\begin{enumerate}
\item $\lim_{N\to \infty}e^{iy_0} =\omega$,  $\alpha\sim_{N\to \infty} \frac{b}{N}$ and $\beta=O(1)$ as $N$ tends to $\infty$;
\item for all $k_1,\ldots,k_n \in \N$, $$\lim_{N\to \infty}\frac{1}{N}\int_{U(N)}\Tr((g-I_N)^{k_1})\cdots \Tr((g-I_N)^{k_1})\Pi(\diff g)=m_{k_1}(\mu)\cdots m_{k_n}(\mu).$$
\end{enumerate}
Then, the conclusions of Theorem~\ref{thsixp} and Corollary~\ref{thsix} are still true whenever $U_{N}$ is a random matrix whose law is an infinitely divisible measure which admits $$\left(i y_0I_N,\left(\begin{array}{cccc}\alpha &  &  & 0 \\ & \ddots &  &  \\ &  & \alpha &  \\0 &  &  & \beta\end{array}\right), \Pi\right)$$
as a  characteristic triplet. \end{remark}

\subsection{Proof of Theorem~\ref{whole}}\label{wholly}We refer the reader to~\cite{Voiculescu1992} for the main definitions of free probability spaces.  We call {\it free unitary multiplicative Lévy process} a family $( U_t)_{t\in\R_+}$ of unitary elements of a non-commutative probability space $(\mathcal{A},\tau)$ such that\label{fmbm}
\begin{enumerate}
\item $U_0=1_{\mathcal{A}}$;
\item For all $0\leq s\leq t$, the distribution of $U_tU_s^{-1}$ depends only on $t-s$;
\item For all $0\leq t_1<\ldots <t_n$, the elements $U_{t_1},U_{t_2}U_{t_1}^{-1},\ldots U_{t_n}U_{t_{n-1}}^{-1}$ are freely independent;
\item The distribution of $U_t$ converge weakly to $\delta_1$ as $t$ tends to $0$.
\end{enumerate}
Notice that this definition differs from the definition in~\cite{Biane1998} by the first and the fourth items.

Let $( U_t)_{t\in\R_+}$ be a free unitary multiplicative Lévy process with marginal distributions $( \mu_t)_{t\in\R_+}$ in $\Ms$. Then, $( \mu_t)_{t\in\R_+}$ is a weakly continuous semigroup of measures for the convolution $\boxtimes$ on $\U$. Moreover, there exists $\alpha\in \R$ and $b\geq 0$ and $\upsilon$ a Lévy measure on $\U$ such that, for all $t\geq 0$, $(e^{i\alpha t},tb,t\upsilon)$ is a $\boxtimes$-characteristic triplet of $U_t$ (see~\cite{Bercovici1992}). Using Lemma~\ref{tripletpi}, it is straightforward to verify that the weakly continuous semigroup whose characteristic triplet is $(i\alpha I_N,b_N,\upsilon_N)$ coincides with
$(\Gamma_N(\mu_t))_{t\in \R_+}$. Therefore, there exists a Lévy process $(U_t^{(N)})_{t\in \R_+}$ in $U(N)$ such that $\Gamma_N(\mu_t)$ is the distribution of $U_t^{(N)}$ for each $t\in \R_+$ (see~\cite{Liao2004}). We already know that, for each fixed $t\in \R_+$, the element $U_t^{(N)}$ converges to $U_t$ in non-commutative {$*$-distribution}, in the sense that, for each non-commutative polynomial $P$ in two variables, one has the convergence
$$\lim_{N\mapsto \infty}\frac{1}{N}\E\left[\Tr\left(P\left(U_t^{(N)},{U_t^{(N)}}^*\right)\right)\right]=\tau(P(U_t,U_t^*)). $$
Since the increments of $( U_t)_{t\in\R_+}$ are freely independent, to prove the convergence of the whole process, it suffices to prove that the increments of $(U_t^{(N)})_{t\in \R_+}$ are asymptotically free. This is a well-known consequence of the factorization property of Theorem~\ref{thsixp} and the fact that the increments of $(U_t^{(N)})_{t\in \R_+}$ are independent and invariant under conjugation by unitary matrices (see for example~\cite{Collins2003,Voiculescu1992,Voiculescu1191,Xu1997}, or the appendix of~\cite{LEVY2011} for a concise treatment).



%
%

\subsection*{Acknowlegments}
The author would like to gratefully thank Thierry Cabanal-Duvillard, Antoine Dahlqvist and Franck Gabriel for useful discussions, and his PhD advisor Thierry Lévy for his helpful comments which led to improvement in this manuscript. 


\bibliographystyle{amsalpha}
\bibliography{/Users/guillaumecebron/Documents/Bibtex/freelevy}



\end{document}